\theoremstyle{plain}
  \newtheorem{theorem}{Theorem}[section]
  \newtheorem{lemma}[theorem]{Lemma}
  \newtheorem{corollary}[theorem]{Corollary}
  \newtheorem{proposition}[theorem]{Proposition}
\theoremstyle{definition}
  \newtheorem{definition}[theorem]{Definition}
\theoremstyle{remark}
  \newtheorem{remark}[theorem]{Remark}
  \newtheorem*{ack}{Acknowledgment}
\numberwithin{equation}{section}
\newcommand\RedeclareMathOperator{%
  \@ifstar{\def\rmo@s{m}\rmo@redeclare}{\def\rmo@s{o}\rmo@redeclare}%
}
\newcommand\rmo@redeclare[2]{%
  \begingroup \escapechar\m@ne\xdef\@gtempa{{\string#1}}\endgroup
  \expandafter\@ifundefined\@gtempa
     {\@latex@error{\noexpand#1undefined}\@ehc}%
     \relax
  \expandafter\rmo@declmathop\rmo@s{#1}{#2}}
\newcommand\rmo@declmathop[3]{%
  \DeclareRobustCommand{#2}{\qopname\newmcodes@#1{#3}}%
}
\newcommand{\N}{\ensuremath{\mathbb{N}}}
\newcommand{\R}{\ensuremath{\mathbb{R}}}
\begin{document}
\title[{Convergence of combinatorial Ricci flows to degenerate circle patterns}]
{Convergence of combinatorial Ricci flows\\ to degenerate circle patterns}

\author{Asuka Takatsu}
\address{\scriptsize Department of Mathematical Sciences, Tokyo Metropolitan University, Tokyo 192--0397, JAPAN}
\email{asuka@tmu.ac.jp}
\address{\scriptsize Mathematical  Analysis Team, RIKEN Center for Advanced Intelligence Project (AIP), Tokyo 103--0027, JAPAN}

\date{\today}

\keywords{weighted triangulation, circle pattern, combinatorial Ricci flow, gradient flow}
\subjclass[2010]{Primary 53C44, Secondary 52C26}

\thanks{This work was supported by JSPS KAKENHI Grant Numbers 15K17536, 16KT0132.}

\begin{abstract}
We investigate the combinatorial Ricci flow on a surface of nonpositive Euler characteristic
when the necessary and sufficient condition for the convergence of the combinatorial Ricci flow is not valid.
This observation addresses one of questions raised by B.~Chow and F.~Luo.
\end{abstract}

\maketitle
\section{Introduction}
On a connected, oriented, closed surface $S$, 
although its triangulation  $T=(V,E,F)$ is a \emph{topological} structure,
a metric of constant curvature zero on a weighted triangulation $(T,\Theta)$ uniquely determines a pair of a \emph{geometric} structure of $S$
and a configuration of circles on $S$ realizing the data $(T,\Theta)$ (see~\cite{Th}).
Here $V,E,F$ respectively denote the set of all vertices, edges and triangles of $T$, 
and we assume that the \emph{triangulation} $T$ lifts to a triangulation in the universal cover of $S$.
A \emph{weighted triangulation} $(T,\Theta)$ is a triangulation equipped with a \emph{weight} $\Theta:E \to [0,\pi/2]$.
We call $r=(r_v)_{v\in V} \in \R_{>0}^{V}$ a \emph{metric} on $(T,\Theta)$ 
and define the \emph{length} $\ell(e;r)$ of an edge $e$ with endpoints $v, u$ with respect to $r$ by
\[
\ell(e;r)
:=
\left(\begin{tabular}{c}
the distance between the centers of two circles on the universal cover of $S$\\
with radii $r_v, r_u$ intersecting at angle $\Theta(e)$
\end{tabular}
\right),
\]
which determines the angle at each vertex in triangles of $(T,\Theta)$.
The \emph{cone angle} $a_v(r)$ at the vertex $v$ with respect to $r$ is the sum of each angle at $v$ in all triangles of $(T,\Theta)$ having~$v$ as one of the vertices.
We call $K_v(r):=2\pi-a_v(r)$ the \emph{curvature} at the vertex $v$ with respect to $r$.
A metric is called a \emph{circle pattern metric} provided its curvature is identically zero at each vertex.

A circle pattern metric on $(T, \Theta)$, which connects the topology with the geometry of $S$, does not always exist.
However, a criterion for $(T,\Theta)$ to have a circle pattern metric is known, 
which depends on whether the Euler characteristic $\chi(S)$ of $S$ is positive or nonpositive.
The case of $\chi(S) \leq 0$ was first obtained by Thurston \cite{Th}.
Chow--Luo \cite{CL} gave another proof by using the combinatorial Ricci flow,  which is a family of metrics $\{r(t)\}_{t\in[0,T)}$ that satisfies 
\begin{equation*}
\frac{d}{dt}r_v(t)=-K_v(r(t))\sigma_S(r_v(t)),
\quad
\sigma_S(r_v):=\begin{dcases}
r_v, & \text{if\ $\chi(S)=0$},\\
\sinh r_v, & \text{if\ $\chi(S)<0$}.
\end{dcases}
\end{equation*}
\begin{theorem} {\rm (\cite{Th}*{Theorem 13.7.1}, \cite{CL}*{Theorems 1.1, 1.2})} \label{CLT}
Let $(T,\Theta)$ be a weighted triangulation of a surface $S$ of nonpositive Euler characteristic.
The following conditions ${\rm (I),(II)}$ and ${\rm(III)}$ are equivalent to each other.$:$
\begin{enumerate}
\item[\rm{(I)}]
There exists a unique circle pattern metric, up to a scalar multiple if $\chi(S)=0$.
\item[\rm{(II)}]
It holds for any nonempty proper subset $U$ of $V$ that
\[
\phi(U):=-\sum_{f\in \mathrm{Lk}(U)} ( \pi-\Theta( e^f_{v(f)}) ) +2\pi \chi(\tau_{U}) <0,
\]
where $\tau_{U}$ is the CW-subcomplex of $T$, consisting of all cells whose vertices are contained in~$U$,
and  $f\in\mathrm{Lk}(U)$ is an element in $F$ 
such that one vertex $v(f)$ in $f$ belongs to $U$ and 
neither of the endpoints of the edge $e^f_{v(f)}$ in $f$ opposite to $v(f)$ belongs to~$U$.
\item [\rm{(III)}]
Given any metric $r$ on $(T,\Theta)$,
the combinatorial Ricci flow with initial data $r$ exists for all time and converges on $\R^{V}_{>0}$ at infinity.
\end{enumerate}
\end{theorem}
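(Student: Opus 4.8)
The plan is to recognize the combinatorial Ricci flow as the negative gradient flow of a convex energy, so that (I), (II) and (III) all become assertions about the critical points and the coercivity of this energy. First I would pass from the radii $r$ to variables $u=(u_v)_{v\in V}$ by setting $u_v=\log r_v$ when $\chi(S)=0$ and $u_v=\log\tanh(r_v/2)$ when $\chi(S)<0$. A short computation shows that in both cases $du_v/dr_v=\sigma_S(r_v)^{-1}$, so the choice of $\sigma_S$ is exactly calibrated to turn the flow into the autonomous system $\dot u_v=-K_v$. The central structural fact is then that the curvature covector $K=(K_v)_{v\in V}$ is a closed $1$-form in the $u$-variables, i.e. $\partial_{u_w}K_v=\partial_{u_v}K_w$, and that its potential $\mathcal{E}$ is convex. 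Both claims reduce to a single triangle: for a triangle with inner angles $\theta_1,\theta_2,\theta_3$ one verifies the symmetry $\partial_{u_j}\theta_i=\partial_{u_i}\theta_j$ and that the $3\times3$ matrix $(\partial_{u_j}\theta_i)$ is negative semidefinite, with kernel spanned by the constant vector in the Euclidean case and trivial kernel in the hyperbolic case. Summing over all triangles of $(T,\Theta)$ makes $\sum_{v} K_v\,du_v$ exact, and $\mathcal{E}$ convex with the only degeneracy being global scaling when $\chi(S)=0$.

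With this variational picture the equivalences follow a standard pattern. For (I) $\Leftrightarrow$ (II) I would describe the image of the curvature map $u\mapsto K(u)$ as an open convex set whose closure is cut out by finitely many linear inequalities indexed by the nonempty proper subsets $U\subset V$. The inequality attached to $U$ arises by letting the radii on $U$ degenerate relative to those off $U$ and tracking the limiting angles of the triangles in $\mathrm{Lk}(U)$: as the radius at $v(f)$ runs to the boundary, the angle of $f$ at $v(f)$ tends to a value governed by $\pi-\Theta(e^f_{v(f)})$, and a combinatorial Gauss--Bonnet count on $\tau_U$ assembles the surviving terms into precisely $\phi(U)$, bounding the feasible range of the partial curvature $\sum_{v\in U}K_v$. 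One then reads off that $0$ lies in the interior of the image --- equivalently, a circle pattern metric exists --- exactly when $\phi(U)<0$ for every such $U$; strict convexity of $\mathcal{E}$ gives uniqueness, up to scaling when $\chi(S)=0$. For (II) $\Rightarrow$ (III) I would use (II) to show $\mathcal{E}$ is proper on the relevant slice of $u$-space, so the flow remains in a compact region, exists for all time, and descends to the unique minimizer where $K\equiv0$. The implication (III) $\Rightarrow$ (I) is then immediate: any limit of the flow in $\R^{V}_{>0}$ is a zero of $K$, hence a circle pattern metric, which closes the cycle together with (I) $\Rightarrow$ (II).

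I expect the main obstacle to be the degeneration analysis underpinning (II). One must control the angles of all triangles meeting $U$ uniformly as the radii on $U$ approach the boundary of the admissible configuration space, show that no spurious terms survive, and confirm that the bookkeeping along $\mathrm{Lk}(U)$ together with the Euler-characteristic term $2\pi\chi(\tau_U)$ produces exactly $\phi(U)$ and no other obstruction. Carrying this limit out rigorously --- uniformly across the Euclidean and hyperbolic length laws --- is the delicate combinatorial-geometric heart of the argument; everything else is convex analysis and standard gradient-flow theory.
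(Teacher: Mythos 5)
Your proposal is correct and takes essentially the same route as the proof this paper relies on: Theorem~\ref{CLT} is quoted from Thurston and Chow--Luo, and their argument is precisely your scheme --- the change of variables $X_S$ turns the flow into the gradient flow of a convex potential (the symmetry and signs of the angle derivatives in Proposition~\ref{angle_sym}, the potential and its strict convexity on the scaling slice in Proposition~\ref{convex}), the degeneration analysis locates $0$ in the image of the curvature map via the inequalities $\sum_{v\in U}K_v(r)>\phi(U)$ of Proposition~\ref{topcurv}, and properness of the potential under (II) gives long-time existence and convergence to the unique zero of $K$. I see no genuine gap in your outline.
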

We remark that, instead of (II),
the following condition~(II$'$) is used in~\cite{Th}*{Theorem 13.7.1} and \cite{CL}*{Theorem 1.2}.:
\begin{enumerate}
\item[\rm{(II$'$)}] 
$(T,\Theta)$ satisfies the following two conditions.:
\begin{itemize}
\item[\rm{(i)}]
If $e_1,e_2,e_3 \in E$ form a null-homotopic loop and if $\sum_{i=1}^3 \Theta(e_i)\geq \pi$,
then these three edges form the boundary of a triangle of $T$.
\item[\rm{(ii)}]
If $e_1,e_2,e_3,e_4 \in E$ form a null-homotopic loop and if $\sum_{i=1}^4 \Theta(e_i)= 2\pi$,
then these four edges form the boundary of the union of two adjacent triangles of $T$.
\end{itemize}
\end{enumerate}
The condition~(II) implies (II$'$), and they are equivalent to each other if $\chi(S)<0$.
Although it is stated in \cite{CL}*{Proposition 1.3} that (II) follows from (II$'$) even if $\chi(S)=0$,
there is a counterexample (see Remark \ref{counter}).
Note that 
Thurston~\cite{Th}*{\S 13.7} treated a cell-division of not only triangles but also quadrangles by adding following condition:
\begin{enumerate}
\item[(IV)]
$\Theta(e)=\pi/2$ if $e$ is an edge of a quadrilateral. 
\end{enumerate}
Such a weighted quadrangle can be eliminated by subdivision into two weighted triangles by a diagonal of weight 0.

Chow--Luo~\cite{CL}*{\S 7} raised some questions about the combinatorial Ricci flow,
one of which is to investigate the combinatorial Ricci flow  when (II) is not valid.
We address it when the maximum of $\phi(U)$ over all nonempty proper subsets $U$ of $V$ is zero.
\begin{theorem}\label{main}
Let $(T,\Theta)$ be a weighted triangulation of a surface $S$ of nonpositive Euler characteristic 
such that $\phi(U) \leq 0$ holds for any $U\subset V$ and 
\[
Z_T:=\{z\in V \ |\ \text{there exists a proper subset $Z$ of $V$ such that  $z\in Z$  and  $\phi(Z)=0 $}\}
\]
is nonempty.
Then for any metric $r$ on $(T,\Theta)$,
the combinatorial Ricci flow $\{r(t)\}_{t\geq0}$ with initial data $r$ does not converge on $\R^{V}_{>0}$ at infinity, however
\[
\lim_{t \to \infty} K_v(r(t))=0
\]
holds for any vertex $v$. 

On the one hand,
for $\chi(S)=0$,  $\{r(t)\}_{t\geq0}$ does not converge on $\R^{V}_{\geq 0}$ at infinity.
However if we fix an arbitrary $v \in V\setminus Z_T$,
then the limit
\[
\rho_u:=\lim_{t \to \infty} \frac{r_u(t)}{r_{v}(t)}
\]
exists for any  $u\in V$,
where $Z_T=\{z\in V\ |\ \rho_z=0\}$ holds and 
$(\rho_u)_{u\in V\setminus Z_T}$ is a unique circle patten metric with normalization $\rho_{v}=1$
on a certain weighted triangulation with vertices~$V\setminus Z_T$.

On the other hand, for $\chi(S)<0$, $\{r(t)\}_{t\geq0}$ converges on $\R^{V}_{\geq 0}$ at infinity,
where we have
$Z_T=\{z\in V\ |\ \lim_{t \to \infty}r_z(t)=0\}$ holds and 
the limit of $(r_v(t))_{v\in V\setminus Z_T}$ at infinity is a unique circle patten metric on a certain weighted triangulation with vertices~$V\setminus Z_T$.

\end{theorem}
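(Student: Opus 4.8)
The plan is to exploit the variational structure of the flow. In the coordinates $u_v=\log r_v$ when $\chi(S)=0$ and $u_v=\log\tanh(r_v/2)$ when $\chi(S)<0$, a direct computation shows $\tfrac{du_v}{dt}=-K_v(r(t))$ in both cases, and, following Chow--Luo, the vector field $(K_v)_{v\in V}$ is the gradient of a convex function $\mathcal{E}$ on the domain swept out by the admissible $u$ (namely $\R^{V}$, invariant under adding multiples of the all-ones vector $\mathbf{1}$, when $\chi(S)=0$, and $(-\infty,0)^{V}$ when $\chi(S)<0$, where $r_z\to 0$ corresponds to $u_z\to-\infty$). Thus the flow is the negative gradient flow $\dot u=-\nabla\mathcal{E}(u)$ and $\tfrac{d}{dt}\mathcal{E}(u(t))=-\sum_v K_v^2\le 0$. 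The non-convergence on $\R^{V}_{>0}$ is then immediate: an $\R^{V}_{>0}$-limit of the flow would be a critical point of $\mathcal{E}$, i.e.\ a circle pattern metric, and none exists because $\max_U\phi(U)=0$ makes condition~(II) fail, so Theorem~\ref{CLT} forbids convergence in $\R^{V}_{>0}$. For $\chi(S)=0$ I would moreover note that $\sum_v\dot u_v=-\sum_vK_v=-2\pi\chi(S)=0$, so $\prod_v r_v(t)$ is conserved; hence if some $r_z\to 0$ then some $r_u\to\infty$, which already precludes convergence on $\R^{V}_{\ge 0}$ and forces us to work with the ratios $\rho_u$.

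The technical heart is the asymptotic identity that, as the radii of a proper subset $U\subset V$ are sent to $0$ relative to the complement,
\[
\sum_{v\in U}K_v(r)\longrightarrow \phi(U).
\]
I would prove it from the elementary limit that the inner angle at a vertex $z$ of radius tending to $0$, in a triangle whose edge $e$ opposite $z$ carries weight $\Theta(e)$, tends to $\pi-\Theta(e)$, combined with the incidence identity $n_2+3n_3=2|E_U|$ (where $n_k$ is the number of triangles meeting $U$ in exactly $k$ vertices and $E_U$ the edges inside $U$), which reorganizes the surviving angle sum into the term $2\pi\chi(\tau_U)$ via Euler's formula; the contributions of $n_2$- and $n_3$-triangles collapse to $\pi$ each, leaving exactly the $\mathrm{Lk}(U)$-sum. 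Applying this along the ray $s\mapsto u-s\,\mathbf{1}_U$ (with $\mathbf{1}_U$ the indicator vector of $U$), convexity makes $s\mapsto\mathcal{E}(u-s\mathbf{1}_U)$ convex with derivative $-\sum_{v\in U}K_v\to-\phi(U)\ge 0$; since $\phi(U)\le 0$, this derivative is $\le 0$ throughout, so $\mathcal{E}$ is non-increasing and bounded below along the ray. This delivers a lower bound for $\mathcal{E}$ along the flow, whence $\int_0^{\infty}\sum_vK_v^2\,dt<\infty$, and, after a standard estimate bounding $\tfrac{d}{dt}K_v$, the pointwise conclusion $\lim_{t\to\infty}K_v(r(t))=0$ for every $v$. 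It also shows that the infimum of $\mathcal{E}$ is attained only in the limit $u_z\to-\infty$, i.e.\ $r_z\to 0$, for the vertices that degenerate.

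It remains to identify that degenerating set with $Z_T$ and to recognize the limit as a genuine circle pattern metric. Using that $\phi\le 0$ together with a submodularity inequality for $\phi$, I would show that $\{U:\phi(U)=0\}$ is closed under unions, so $\phi(Z_T)=0$, and then construct the reduced weighted triangulation $(T',\Theta')$ on $V\setminus Z_T$: keep the edges and triangles lying inside $V\setminus Z_T$, and replace each triangle of $\mathrm{Lk}(Z_T)$ by its surviving opposite edge with inherited weight. The maximality built into $Z_T$ should give that $(T',\Theta')$ satisfies the strict inequality~(II), so Theorem~\ref{CLT} furnishes a unique circle pattern metric on it (unique up to scale if $\chi(S)=0$). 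Convexity of $\mathcal{E}$ then forces the trajectory toward the infimum face $\{u_z=-\infty:z\in Z_T\}$: for $\chi(S)<0$ this yields $r_z(t)\to 0$ for $z\in Z_T$ and convergence of $(r_v(t))_{v\notin Z_T}$ in $\R^{V}_{\ge 0}$ to the reduced circle pattern metric, while for $\chi(S)=0$, where $\mathcal{E}$ descends to $\R^{V}/\R\mathbf{1}$, convergence holds only in the quotient, giving the limits $\rho_u=\lim r_u(t)/r_v(t)$ with $\rho_z=0$ exactly for $z\in Z_T$ and $(\rho_u)_{u\notin Z_T}$ the reduced circle pattern metric normalized by $\rho_v=1$.

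The main obstacle I anticipate is this last degeneration step rather than the soft gradient-flow dynamics. Proving that it is \emph{exactly} $Z_T$ that degenerates—neither more nor fewer vertices—hinges on the submodularity of $\phi$ and on uniform control of the angle asymptotics along the flow, not merely in the static limit; and verifying that the combinatorially reduced $(T',\Theta')$ is a bona fide weighted triangulation satisfying the strict condition~(II) requires careful bookkeeping of which edges, triangles and weights survive when the $Z_T$-circles collapse. For $\chi(S)=0$ there is the additional subtlety that one must transfer convergence from the quotient $\R^{V}/\R\mathbf{1}$ back to the ratios $\rho_u$ and check the identification $Z_T=\{z:\rho_z=0\}$ is consistent with the chosen normalization $\rho_v=1$ at $v\notin Z_T$.
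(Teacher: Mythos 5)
Your soft framework (gradient flow of a convex $\psi$, non-convergence on $\R^V_{>0}$ via Theorem \ref{CLT}, product conservation when $\chi(S)=0$, the limit $\sum_{v\in U}K_v\to\phi(U)$ of Proposition \ref{topcurv}) matches the paper, and you honestly flag the degeneration step as the hard part; but there are two genuine gaps there, and they are exactly where the paper's real work lies. First, your route to $\lim_{t\to\infty}K_v(r(t))=0$ hinges on $\mathcal{E}$ being bounded below along the flow, and your argument for that does not work: along the ray $s\mapsto u-s\mathbf{1}_U$ with $\phi(U)=0$ you only get a derivative that is negative and tends to $0$, which by itself does not bound $\mathcal{E}$ below even on that ray (compare $-\log$ on $(0,\infty)$: convex, derivative tending to $0$, unbounded below), and a fortiori does not bound $\mathcal{E}$ along the trajectory, which is not a ray. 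The actual degeneration is \emph{multi-scale}: sets in $\mathcal{Z}$ can be properly nested, so radii collapse at different relative rates, and your proposal never engages with this hierarchy. The paper avoids the lower-bound issue entirely via Proposition \ref{AGS}: it builds the explicit comparison curve $x^\ast(t)=X_S\bigl((t^{-n(u)}r^\ast_u)_{u\in V}\bigr)$, where $n(u)$ is the depth of $u$ in the nested filtration $V=\cup_{n=0}^N V_n$ and $r^\ast$ is the hierarchical limit datum; since $|\nabla\psi(x^\ast(t))|\to0$ (Corollary \ref{outin0}) and $|x^\ast(t)-X_S(r(0))|$ grows only like $\log t$, the estimate of Proposition \ref{AGS} gives $|K(r(t))|\to0$ with no boundedness-below claim. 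You would either need this comparison-curve device or a genuinely quantitative, uniform decay rate for $\sum_{v\in U}K_v-\phi(U)$ along curved multi-scale paths, which your sketch does not supply.

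Second, even granting $K(r(t))\to0$, your identification of the limit is underpowered. The construction of $r^\ast$ itself requires a full Chow--Luo-type theory for the \emph{infinitesimal} (Euclidean, regardless of $\chi(S)$) curvature $K_Z$ on a disk triangulation attached to each $Z\in\mathcal{Z}$: a Gauss--Bonnet identity (Lemma \ref{likeDGB}), strict negativity $\phi_Z(U)<0$ for proper subsets (Lemma \ref{less}), convexity and injectivity of the associated potential (Lemma \ref{injective}), and existence/uniqueness of the zero $r^\ast|_{V_Z}$ (Lemma \ref{origin}). This is what controls the \emph{ratios among the collapsing radii themselves}, and it is indispensable: the curvature at $z\in Z_T$ depends on those ratios, so without it you cannot conclude $K_z(r(t))\to0$ at collapsing vertices independently of the energy argument, nor that the limits $\rho_u$ exist (rather than merely subconverge), nor that the degenerate set is exactly $Z_T$ — the paper gets all three from the equality case of Proposition \ref{topcurv} (Remark \ref{310}), Corollary \ref{outin}, injectivity, and an induction over the levels (Proposition \ref{cor}). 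Your invocation of Theorem \ref{CLT} on the reduced triangulation handles only the top level $V\setminus Z_T$. Two smaller points: your reduced complex has quadrilateral faces when some $Z\in\mathcal{C}(Z_T)$ lies in $\mathcal{Z}_4$, which must be subdivided by weight-$0$ diagonals (condition (IV)) with a check (Proposition \ref{diagonal}) that the result is independent of the diagonal; and your union-closedness claim for $\{\phi=0\}$ corresponds to the paper's assertion that every connected component of $Z_T$ lies in $\mathcal{Z}$, which is the correct (componentwise) form of what you want.
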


It should be mentioned that weighted triangulations on a torus not satisfying (II$'$-i) are previously studied by Yamada~\cite{Y}, 
where a part of Theorem \ref{main} was stated and partially proved for $|V| \leq 3$ by analyzing individual cases.
We present a completely different proof for the general cases, based on an infinitesimal description of degenerate circle patten metrics.

The organization of this paper is as follows. 
Section 2 is devoted to recalling the relevant facts and terminology from  weighted triangulations, circle pattern metrics and the combinatorial Ricci flow.
In Section 3,  we give the infinitesimal description of degenerate circle patten metrics and prove Theorem \ref{main}.

\begin{ack}
The author would like to thank Professor Hiroshi Matsuzoe for bringing this topic to her attention and stimulating discussions.
She also expresses her thanks to Mr.~Masahiro Yamada for seminars and discussions.
\end{ack}

\section{Preliminaries}
Throughout this paper, let $S$ be a connected, oriented, closed surface of nonpositive Euler characteristic.
Most of the results in this section hold true for a general surface if modified appropriately.
See \cite{Th}, \cite{CL}, \cite{V} \cite{MR} and references therein for details.
\subsection{Weighted Triangulation}
A \emph{triangulation} $T=(V,E,F)$ of $S$ is always assumed to lift to a triangulation in the universal cover of $S$,
where $V,E,F$ respectively denote the set of all vertices (0-cells), edges (1-cells) and triangles (2-cells) of $T$.
In particular, there is no null-homotopic loop formed by at most two edges.
Although a 2-cell $f\in F$ is an open set in~$S$, if its boundary $\partial f$ contains a vertex $v\in V$ and an edge $e\in E$, 
then we write $v\in f$ and $e\subset f$ instead of $v\in\partial f$ and $e\subset \partial f$, respectively.
Similarly for $U\subset V$, we write $f\cap U$ and $e \cap U $ instead of  $\partial f \cap U$ and $\partial e \cap U$, respectively.
Let us fix a \emph{weighted triangulation} $(T,\Theta)$ of $S$, 
where $\Theta:E\to[0,\pi/2]$ is a \emph{weight}.

\begin{definition}
A pair of vertices is said to be \emph{adjacent} if they are joined by an edge.
\end{definition}
\begin{definition}
For $U \subset V$,
define the \emph{link of $U$} by
\[
\mathrm{Lk}(U):=\cup_{v\in U}\{f\in F |\ \text{$v\in f$ and there exists $e\in E$ such that $e\subset f$ and $e\cap U=\emptyset$}\}.
\]
For $f\in \mathrm{Lk}(U)$, 
we denote by $v(f)$  a unique element in $U$ such that $v(f)\in f$.
\end{definition}
\begin{definition}
For $U \subset V$, let $\tau_{U}$ be the CW-subcomplex of $T$, consisting of all cells whose vertices are contained in $U$.
We say that $U$ is \emph{connected} if $\tau_{U}$ is connected.
Set
\[
\mathcal{C}(U):=\{W \subset U |\ \text{$\tau_W$ is a connected component of $\tau_{U}$} \}.
\]
\end{definition}
If $\tau_{U}$ is contractible, then $\{e^f_{v(f)}\}_{f\in\mathrm{Lk}(U)}$ form a null-homotopic loop,
where $e^f_{v(f)}$ is the edge in $f$ opposite to $v(f)$.
However the converse is not true in general. 
For $j=1,2,3$, define
\[
F_{U}^j:=\{f\in F\ |\ \text{$j$ vertices of $f$ are in $U$ }\},
\]
then $F_{U}^1=\mathrm{Lk}(U)$ and the number of edges (resp.\ triangles) of $\tau_{U}$ is $(|F_{U}^2|+3|F_{U}^3|)/2$ (resp.\ $|F_{U}^3|$), 
consequently
\[
\chi(\tau_{U})=|U|-(|F_{U}^2|+|F_{U}^3|)/2. 
\]

Let us consider the function defined for $U\subset V$ by
\[
\phi(U):=-\sum_{f\in \mathrm{Lk} (U)}(\pi-\Theta(e^f_{v(f)}))+2\pi\chi(\tau_{U}),
\]
where the right-hand side is additive with respect to connected components of $\tau_{U}$.

\begin{proposition}{\rm (\cite{Th}, \cite{CL}*{Proofs of Corollaries 4.2,4.3}, \cite{MR})}\label{CW}.
Let $U$ be a nonempty proper connected subset of $V$.
If $\phi(U)>0$, then $\tau_{U}$ is contractible and 
we have
\[
\sum_{f\in \mathrm{Lk}(U)}\Theta(e^f_{v(f)})>\pi, \qquad
|\mathrm{Lk}(U)|=3.
\]
If $\phi(U)=0$, then $\tau_{U}$  is contractible and 
we have
\[
\sum_{f\in\mathrm{Lk}(U)}\Theta(e^f_{v(f)})=(|\mathrm{Lk}(U)|-2)\pi,
\quad
|\mathrm{Lk}(U)|\in\{3,4\}.
\]
\end{proposition}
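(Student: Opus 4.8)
The plan is to combine the weight bound $\Theta\le\pi/2$ with the topology of $\tau_U$, treating the vanishing of $\mathrm{Lk}(U)$ as the one genuinely delicate point.

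First I record two elementary estimates. Since $\Theta(e)\le\pi/2$ for every $e\in E$, each summand satisfies $\pi-\Theta(e^f_{v(f)})\ge\pi/2$, so $\sum_{f\in\mathrm{Lk}(U)}(\pi-\Theta(e^f_{v(f)}))\ge\tfrac{\pi}{2}|\mathrm{Lk}(U)|$. On the topological side, because $U$ is connected and proper, $\tau_U$ is a connected subcomplex of $S$ strictly smaller than $S$; such a subcomplex is homotopy equivalent to a finite graph, so $H_2(\tau_U)=0$ and $\chi(\tau_U)=1-b_1(\tau_U)\le 1$, with equality exactly when $\tau_U$ is contractible. Together these give $\phi(U)\le -\tfrac{\pi}{2}|\mathrm{Lk}(U)|+2\pi\chi(\tau_U)\le -\tfrac{\pi}{2}|\mathrm{Lk}(U)|+2\pi$.

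The main obstacle is to rule out $\mathrm{Lk}(U)=\emptyset$ under the hypothesis $\phi(U)\ge 0$; this is exactly the case the displayed estimate cannot control, since then $\phi(U)=2\pi\chi(\tau_U)$ and a priori $\chi(\tau_U)$ could vanish. I would prove the contrapositive: if $F_U^1=\mathrm{Lk}(U)=\emptyset$ then $\phi(U)<0$. Assuming $F_U^1=\emptyset$, every edge joining $U$ to $V\setminus U$ lies in two triangles of $F_U^2$; walking around the star of a vertex $w\in V\setminus U$ adjacent to $U$ and using $F_U^1=\emptyset$ at each step forces every neighbor of $w$ to lie in $U$, that is, $w$ is a ``pocket'' surrounded by a cycle in $\tau_U$. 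Connectivity of $S$ then excludes vertices of $V\setminus U$ that are not adjacent to $U$, so $V\setminus U$ consists solely of such pockets and $S$ is obtained from $\tau_U$ by attaching $|V\setminus U|$ disks along disjoint cycles. Hence $\chi(\tau_U)=\chi(S)-|V\setminus U|\le -1$, so $\phi(U)=2\pi\chi(\tau_U)<0$, as claimed.

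With the empty case eliminated, the conclusion follows quickly. Assuming $\phi(U)\ge0$, the previous paragraph gives $|\mathrm{Lk}(U)|\ge1$, and then $0\le\phi(U)\le-\tfrac{\pi}{2}+2\pi\chi(\tau_U)$ forces $\chi(\tau_U)\ge1$, hence $\chi(\tau_U)=1$ and $\tau_U$ is contractible. By the fact recalled before the statement, the edges $\{e^f_{v(f)}\}_{f\in\mathrm{Lk}(U)}$ then form a null-homotopic loop, which by the standing assumption that no null-homotopic loop has at most two edges must use at least three edges; thus $|\mathrm{Lk}(U)|\ge3$. Using $\chi(\tau_U)=1$ I rewrite $\phi(U)=\sum_{f\in\mathrm{Lk}(U)}\Theta(e^f_{v(f)})-(|\mathrm{Lk}(U)|-2)\pi$ and invoke $\sum_{f}\Theta(e^f_{v(f)})\le\tfrac{\pi}{2}|\mathrm{Lk}(U)|$. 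If $\phi(U)>0$ this yields $(|\mathrm{Lk}(U)|-2)\pi<\tfrac{\pi}{2}|\mathrm{Lk}(U)|$, i.e.\ $|\mathrm{Lk}(U)|<4$, so $|\mathrm{Lk}(U)|=3$ and $\sum_{f}\Theta(e^f_{v(f)})>\pi$; if $\phi(U)=0$ the same inequality gives $|\mathrm{Lk}(U)|\le4$, so $|\mathrm{Lk}(U)|\in\{3,4\}$ and $\sum_{f}\Theta(e^f_{v(f)})=(|\mathrm{Lk}(U)|-2)\pi$.
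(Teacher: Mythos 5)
Your proof is correct, but note that the paper never proves Proposition \ref{CW} itself: it quotes the result from Thurston, Chow--Luo and Marden--Rodin, and the only argument of this kind written out in the text is Lemma \ref{less}, the analogue for the disk $(T_D,\Theta_D)$. Your main case coincides with that template: $\Theta\le\pi/2$ gives $\phi(U)\le-\tfrac{\pi}{2}|\mathrm{Lk}(U)|+2\pi\chi(\tau_U)$, forcing $\chi(\tau_U)=1$ and $|\mathrm{Lk}(U)|\le4$, and the lower bound $|\mathrm{Lk}(U)|\ge3$ comes, exactly as in Lemma \ref{less}, from the null-homotopic loop $\{e^f_{v(f)}\}_{f\in\mathrm{Lk}(U)}$ together with the standing assumption that no null-homotopic loop is formed by at most two edges. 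What is genuinely yours is the treatment of $\mathrm{Lk}(U)=\emptyset$: in the disk setting the paper dismisses this case using the boundary vertices $V^Z\subset V_D\setminus U$, an argument unavailable on a closed surface, whereas your pocket argument (every $w\in V\setminus U$ has all its neighbours in $U$, hence $\chi(\tau_U)=\chi(S)-|V\setminus U|\le\chi(S)-1\le-1$) is self-contained and is precisely where the hypothesis $\chi(S)\le0$ enters -- necessarily so, since on the sphere $U=V\setminus\{w\}$ gives $\mathrm{Lk}(U)=\emptyset$, $\phi(U)=2\pi>0$ and $\tau_U$ contractible, so the statement genuinely fails there. Two cosmetic points to tighten: the link cycles of distinct pockets need not be disjoint in $\tau_U$ (they may share vertices and edges), but your Euler-characteristic count survives because the open stars are disjoint and each pocket contributes $1-\deg(w)+\deg(w)=1$ by direct cell counting; and in the star-walking step you should allow triangles with repeated vertices, which this paper explicitly permits -- a triangle with corners $w,w,x$ and $x\in U$ has its loop edge at $w$ disjoint from $U$, hence lies in $\mathrm{Lk}(U)$ and is excluded by $F^1_U=\emptyset$ anyway, so the walk goes through unchanged.
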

\begin{remark}\label{counter}
If $e_1,e_2,e_3 \in E$ form a null-homotopic loop but not the boundary of a triangle of~$T$,
then there exits $U\subset V$ such that 
$\tau_{U}$  is contractible and $\{e_i\}_{i=1}^3=\{e^f_{v(f)}\}_{f\in \mathrm{Lk}(U)}$,
in which case $\sum_{i=1}^3\Theta(e_i) \geq \pi$ is equivalent to $\phi(U)\geq0$.
Thus the contrapositive of the implication from (II) to (II$'$-i) and similarly to (II$'$-ii) holds.

Conversely, if there exists a nonempty proper connected subset $U$ of $V$ such that $\phi(U)\geq 0$, 
then $\{e^f_{v(f)}\}_{f\in \mathrm{Lk}(U)}$ form a null-homotopic loop
and $\sum_{f\in \mathrm{Lk}(U)} \Theta(e^f_{v(f)}) \geq (|\mathrm{Lk}(U)|-2)\pi$ by Proposition~\ref{CW}.
The null-homotopic loop divides $S$ into open sets  $S_1,S_2$ with $U\subset S_1$ (possibly $S_2=\emptyset$ if $\chi(S)=0$).
We find $|S_1 \cap F|\geq 3$ and, for $k:=\min\{|S_1 \cap F|,|S_2 \cap F|\}$,
the null-homotopic loop forms the boundary of the union of at least $k$-adjacent triangles of $T$.
Set $F_1$ as $S_1$ itself if $|\mathrm{Lk}(U)|=3$, and as two triangles obtained by dividing $S_1$ by a diagonal if $|\mathrm{Lk}(U)|=4$.
Then $F':=F_1 \cup \{f\in F\ |\ f\cap S_2 \neq \emptyset \}$ determines a new triangulation~$(V',E', F')$ of $S$.
It follows from the property $3|F'|=2|E'|$ that
\[
2-2\chi(S) \leq 2(|V'|-\chi(S)) =2(|E'|-|F'|)=|F'|=|F_1|+|S_2 \cap F| \leq 2+|S_2 \cap F|,
\]
implying  $|S_2 \cap F| \geq -2\chi(S)$.
Hence if $\chi(S)<0$ then $k\geq 3$, which contradicts (II$'$).
However for $\chi(S)=0$, namely $S=S^1 \times S^1$, 
if we choose $V=\{v_j=(e^{\sqrt{-1}\pi j/2},e^{-\sqrt{-1}\pi j/2} )\}_{j=1}^2,  U=\{v_1\}$ 
and if $\{e^f_{v_1}\}_{f\in \mathrm{Lk}(\{v_1\})}$ consists of 
the three curves 
\[
\{(e^{\sqrt{-1}\theta}, e^{\sqrt{-1}\pi})\}_{\theta \in (-\pi,\pi)}, \quad 
\{(e^{\sqrt{-1}\pi}, e^{\sqrt{-1}\theta})\}_{\theta \in (-\pi,\pi)},\quad
\{(e^{\sqrt{-1}\theta}, e^{\sqrt{-1}\theta})\}_{\theta \in (-\pi,\pi)}
\]
of weight $\pi/3$, 
then $|S_2\cap F|=1$, implying that $\{e^f_{v_1}\}_{f\in \mathrm{Lk}(\{v_1\})}$ form the boundary of a triangle, 
and this weighted triangulation has no circle pattern metric.
Thus (II$'$) does not imply (II) and (II$'$) is not a sufficient condition to have a circle pattern metric when $\chi(S)=0$.

However, for a weighted triangulation of a torus on which any triangle has three distinct vertices,
(II$'$) implies (II)  as well as the case of $\chi(S)<0$.
\end{remark}
\begin{definition}
For $k=3,4$, define
\begin{align*}
\mathcal{Z}_k:=\{ Z\subset V\  |\ \text{$Z$ is connected and $\phi(Z)=0$ with $|\mathrm{Lk}(Z)|=k$}\},\quad 
\mathcal{Z}:=\mathcal{Z}_3 \cup \mathcal{Z}_4.
\end{align*}
For $Z\in\mathcal{Z}$, 
we denote by $f_Z$ a unique 2-cell containing $Z$ with boundary $\{e^f_{v(f)}\}_{f\in\mathrm{Lk}(Z)}$. 
\end{definition}
%
Note that $f_Z \notin F$ and $Z_T=\{z\in Z \ |\ Z \in \mathcal{Z}\}$.
For $Z \in \mathcal{Z}$, 
there exists $U\in \mathcal{C}(Z_T)$ such that $Z\subset U$,
but $Z\notin \mathcal{C}(Z_T)$ in general.
However any $Z\in \mathcal{C}(Z_T)$ satisfies $Z \in \mathcal{Z}$.

\begin{definition}
Given $Z\in\mathcal{Z}_4$, 
a \emph{diagonal edge} $e$ in $f_Z$ is a 1-cell contained in $f_Z$,
for which there exist $e_1,e_2 \in \{e^f_{v(f)}\}_{f\in\mathrm{Lk}(Z)}$ such that 
$e,e_1,e_2$ form a null-homotopic loop.
\end{definition}

\begin{remark}
For $Z\in\mathcal{Z}_4$, there are two diagonal edges in $f_Z$, neither of which belongs to $E$.
A diagonal edge $e$ in $f_Z$ divides $f_Z$ into two triangles $f_Z^1,f_Z^2$,
and if the weight of $e$ is 0,
then, at each endpoint of $e$, the angle in $f_Z$ is same as the sum of the angles in $f_Z^1, f_Z^2$.
Thus $f_Z$ can be divided into two weighted triangles as well as a weighted quadrangle satisfying~(IV).
\end{remark}

\subsection{Circle Pattern Metrics}
For a weighted triangulation $(T,\Theta)$ of $S$,
we call $r \in \R^{V}_{>0}$ a \emph{metric} on $(T,\Theta)$.
We define the \emph{length} $\ell(e;r)$ of an edge $e$ with endpoints $v,u$ with respect to $r$ by 
\[
\ell(e;r):=
\begin{dcases}
\displaystyle\sqrt{r_v^2+r_u^2+2r_vr_u\cos \Theta(e)}, & \text{if $\chi(S)=0$},\\
\mathrm{arccosh}\left(\cosh r_v \cosh r_u +\sinh r_v\sinh r_u\cos \Theta(e)\right), & \text{if $\chi(S)<0$},
\end{dcases}
\] 
which is the distance between the centers of two circles on the universal cover of $S$
with radii $r_v, r_u$ intersecting at angle $\Theta(e)$.
Needless to say, if $\chi(S)=0$, then $S$ is a torus and its universal cover is the Euclidean plane $\mathbb{E}^2$.
On the other hand, the universal cover of a surface of negative Euler characteristic is the hyperbolic plane $\mathbb{H}^2$.
%

Let us first recall the existence of a unique circle pattern in the universal cover of $S$ 
and properties of the triangle formed by the centers of the three circles.
\begin{proposition}{\rm(\cite{Th}*{Lemmas 13.7.2, 13.7.3}, \cite{CL}*{Lemma 2.3},\cite{V})} \label{angle_sym} 
For any three nonobtuse angles $\Theta_1,\Theta_2,\Theta_3 \in [0,\pi/2]$
and any three radii $r_1, r_2,r_3>0$,
there is a configuration of three circles $C_1,C_2,C_3$ in both $\mathbb{E}^2$ and $\mathbb{H}^2$, 
unique up to isometry, having radii $r_i$ and meeting in angles~$\Theta_i$.
Moreover, for the triangle formed by the centers of the three circles $C_1,C_2,C_3$,
if we regard the angle $\theta_i$ at the center of $C_i$ in the triangle
as a function of $r_1,r_2,r_3$,
then we have
\[
\frac{\partial \theta_i}{\partial r_i}<0<\frac{\partial \theta_i}{\partial r_j}, \quad
 \frac{\partial \theta_i}{\partial r_j} \sigma_S(r_j)=\frac{\partial \theta_j}{\partial r_i} \sigma_S(r_i)
\]
for any $1\leq i, j\leq 3$ with $i\neq j$.
\end{proposition}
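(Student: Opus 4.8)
The plan is to reduce the statement about circles to a statement about the Euclidean (resp.\ hyperbolic) triangle whose vertices are the three centers, and then to read off both existence/uniqueness and the derivative identities from the law of cosines.

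\textbf{Existence and uniqueness.} Given radii $r_1,r_2,r_3>0$ and nonobtuse angles $\Theta_1,\Theta_2,\Theta_3\in[0,\pi/2]$, I would first assign to the pair $(i,j)$ the number $\ell_{ij}$ produced by the distance formula recalled above, with meeting angle $\Theta_k$, the weight on the edge opposite the remaining index $k$. Since two circles of radii $r_i,r_j$ meeting at angle $\Theta_k$ force the distance between their centers to equal exactly $\ell_{ij}$, constructing the configuration is the same as constructing a triangle with these three prescribed side lengths. The only thing to verify is that the $\ell_{ij}$ obey the strict triangle inequality, and this is where nonobtuseness enters: because $\cos\Theta_k\ge0$ and $r_i,r_j>0$, the distance formula gives $\max\{r_i,r_j\}<\ell_{ij}\le r_i+r_j$ in both geometries, whence $\ell_{ij}+\ell_{jk}>r_i+r_k\ge\ell_{ik}$. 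A triple of lengths satisfying the strict triangle inequality determines a triangle, unique up to isometry, in $\mathbb{E}^2$ and in $\mathbb{H}^2$; placing circles of radii $r_i$ at its vertices recovers the required configuration, and uniqueness of the triangle yields uniqueness of the configuration.

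\textbf{The derivatives.} Writing $\theta_i$ for the angle at the center of $C_i$ and $A$ for the area of the triangle of centers, I would compute $\partial\theta_i/\partial r_j$ by the chain rule through the side lengths, using that $r_j$ occurs only in the two sides incident to vertex $j$. Differentiating the law of cosines gives, in the Euclidean case,
\[
\frac{\partial\theta_i}{\partial\ell_i}=\frac{\ell_i}{2A}>0,\qquad \frac{\partial\theta_i}{\partial\ell_j}=-\frac{\ell_i\cos\theta_k}{2A}\quad(\{i,j,k\}=\{1,2,3\}),
\]
where $\ell_i$ denotes the side opposite vertex $i$; the hyperbolic case has the same structure with the hyperbolic area and hyperbolic law of cosines. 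Differentiating the distance formula gives $\partial\ell_{jk}/\partial r_j=(r_j+r_k\cos\Theta_i)/\ell_{jk}$ in the Euclidean case and its hyperbolic analogue, and in both cases the leading factor of this derivative is precisely $\sigma_S(r_j)$ --- this is the structural reason the weight $\sigma_S$ appears in the statement.

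\textbf{Signs.} The sign $\partial\theta_i/\partial r_i<0$ is immediate: the two contributing terms equal $-\ell_i\cos\theta_j/(2A)$ and $-\ell_i\cos\theta_k/(2A)$ multiplied by the positive factors $\partial\ell/\partial r_i$, and since $\theta_j+\theta_k<\pi$ at most one of $\theta_j,\theta_k$ is obtuse, so their sum is strictly negative. The positivity $\partial\theta_i/\partial r_j>0$ is more delicate, because now only one contribution carries the favourable sign (from $\partial\theta_i/\partial\ell_i>0$) while the other need not; here one must again invoke the nonobtuseness $\Theta\in[0,\pi/2]$ of the weights to dominate the competing term. I would settle this by substituting the explicit $\ell_{ij}$ from the distance formula and simplifying.

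\textbf{Symmetry, and the main obstacle.} The identity $\sigma_S(r_j)\,\partial\theta_i/\partial r_j=\sigma_S(r_i)\,\partial\theta_j/\partial r_i$ is best read in the coordinates $u_i$ defined by $du_i=dr_i/\sigma_S(r_i)$, so that $u_i=\log r_i$ if $\chi(S)=0$ and $u_i=\log\tanh(r_i/2)$ if $\chi(S)<0$: in these variables it asserts exactly that the Jacobian $(\partial\theta_i/\partial u_j)$ is symmetric, equivalently that the $1$-form $\theta_1\,du_1+\theta_2\,du_2+\theta_3\,du_3$ is closed. I would establish this by direct computation, composing the symmetric matrix $\ell_i^{-1}\,\partial\theta_i/\partial\ell_j$ coming from the law of cosines with the derivatives $\partial\ell_{jk}/\partial u_j$ (into which $\sigma_S$ has been absorbed), and reducing the two sides to a common form via the law of cosines and the projection formulas (each side equals the sum of the projections of the other two onto it). I expect this symmetry to be the main obstacle: it is the point at which the precise form of $\sigma_S$ is forced, and carrying the computation through so that the Euclidean and hyperbolic cases emerge in parallel --- rather than as two unrelated trigonometric verifications --- is the part demanding care.
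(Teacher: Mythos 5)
The paper does not prove this proposition at all: it is quoted verbatim from Thurston (Lemmas 13.7.2, 13.7.3), Chow--Luo (Lemma 2.3) and Colin de Verdi\`ere, so the relevant comparison is with those standard proofs. Your existence/uniqueness argument is correct and is exactly Thurston's: nonobtuseness gives $\max\{r_i,r_j\}<\ell_{ij}\le r_i+r_j$ in both $\mathbb{E}^2$ and $\mathbb{H}^2$ (in the hyperbolic case via $\cosh r_i\cosh r_j\le\cosh\ell_{ij}\le\cosh(r_i+r_j)$), hence the strict triangle inequality, and a triangle is determined by its side lengths up to isometry. Your law-of-cosines formulas $\partial\theta_i/\partial\ell_i=\ell_i/(2A)$ and $\partial\theta_i/\partial\ell_j=-\ell_i\cos\theta_k/(2A)$ are also correct, and the chain-rule framework is essentially how Chow--Luo organize their computation.

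The gap is in the derivative claims, which are the actual content of the cited lemmas. For the diagonal sign you need the \emph{weighted} inequality
\[
\cos\theta_k\,\frac{\partial\ell_{ik}}{\partial r_i}+\cos\theta_j\,\frac{\partial\ell_{ij}}{\partial r_i}>0,
\]
whereas your argument (``at most one of $\theta_j,\theta_k$ is obtuse'') only yields the unweighted statement $\cos\theta_j+\cos\theta_k>0$ from $\theta_j+\theta_k<\pi$. Since the two positive factors $\partial\ell_{ik}/\partial r_i=(r_i+r_k\cos\Theta_j)/\ell_{ik}$ and $\partial\ell_{ij}/\partial r_i=(r_i+r_j\cos\Theta_k)/\ell_{ij}$ are in general unequal, when one cosine is negative the weighted sum is not controlled by the unweighted one, and the inference fails as stated; ruling this out requires genuinely using nonobtuseness of the $\Theta$'s in the geometry of the configuration (Thurston's proof does this via the feet of the perpendiculars from the circle intersection points to the lines of centers, which lie strictly inside the edges precisely because $\Theta\le\pi/2$), not just $\theta_j+\theta_k<\pi$, which holds in any triangle. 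The off-diagonal positivity $\partial\theta_i/\partial r_j>0$ --- which you correctly flag as delicate --- and the symmetry $\sigma_S(r_j)\,\partial\theta_i/\partial r_j=\sigma_S(r_i)\,\partial\theta_j/\partial r_i$ are both deferred to ``substituting and simplifying'' that is never carried out; these computations are exactly Chow--Luo's Lemma 2.3 and they do not follow from the structure you have set up without being done. Relatedly, your structural explanation that $\sigma_S(r_j)$ is the ``leading factor'' of $\partial\ell_{jk}/\partial r_j$ is not literally true: in the hyperbolic case $\sinh\ell_{jk}\cdot\partial\ell_{jk}/\partial r_j=\sinh r_j\cosh r_k+\cosh r_j\sinh r_k\cos\Theta_i$, which does not factor through $\sinh r_j$, so the appearance of $\sigma_S$ is only visible after the full symmetrization in the coordinates $u_i$ --- i.e., after the very computation you postpone. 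In short: existence/uniqueness is complete and matches the literature, but the three derivative assertions, which are what the paper actually uses (e.g.\ in Lemma 3.9 via \eqref{333}), remain unproven in your write-up, and the diagonal-sign argument as written is invalid rather than merely incomplete.
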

%
%
Although three edges of any triangle of $T$ differ from each other, vertices of a triangle may happen to coincide.
When one vertex $v$ of $f\in F$ is different from the remaining two vertices of $f$,
we denote by $e^f_v$ the edge opposite to $v$ in $f$, and by $\theta^f_v(r)$ the angle at $v$ in $f$ with respect to $r$.
In this case, we have
\begin{equation*}
\cos\theta_{v}^f(r)=
\begin{dcases}
\dfrac{\ell(e;r)^2+\ell(e';r)^2-\ell(e_v^f; r)^2}{2\ell(e;r)\ell(e';r)}, &\text{if $\chi(S)=0$},\\
\dfrac{\cosh\ell(e;r)\cosh(e';r)-\cosh\ell(e_v^f; r)}{\sinh\ell(e;r)\sinh\ell(e';r)}, &\text{if $\chi(S)<0$},\\
\end{dcases}
\end{equation*}
where $e, e'$ are the edges in $f$ adjacent to $v$.

Let $a_v(r)$ be the \emph{cone angle} at the vertex $v$ with respect to $r$, which is the sum of each angle at $v$ in all triangles of $(T,\Theta)$ having $v$ as one of the vertices.
We define by $K_v(r):=2\pi-a_v(r)$ the \emph{curvature} at the vertex $v$ with respect to~$r$.
For $\chi(S)=0$, 
we easily check the invariance of the curvature under uniform scalings of the metric.
Moreover, the total curvature is always zero due to the Descartes-Gauss-Bonnet Theorem.
\begin{proposition}{\rm(\cite{CL}*{Proposition 3.1})} \label{DGB}
If $\chi(S)=0,$ then $\sum_{v\in V} K_v(r)=0$ holds for any~$r\in \R^{V}_{>0}$.
In addition, for a weighted triangulation of a closed ball $D$ in $\mathbb{E}^2$ and its metric~$r$,
if we define the cone angle $a_v(r)$  at the vertex $v$ as well as the case of $\chi(S)=0$ and define the curvature by 
\[
K_{D, v}(r)
:=\begin{dcases}
2\pi-a_v(r), & \text{if $v$ is in the interior of $D$},\\
\pi-a_v(r), & \text{if $v$ is on the boundary of $D$},
\end{dcases}
\]
then we have $\sum_{v} K_{D,v}(r)=2\pi$.
\end{proposition}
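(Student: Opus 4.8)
The plan is to reduce both assertions to the elementary fact that every Euclidean triangle has angle sum $\pi$, combined with Euler's formula and a double-counting of angles; no analytic input is needed and, in particular, the conclusion is independent of the metric $r$. First I would note that, since $\chi(S)=0$, the universal cover of $S$ is $\mathbb{E}^2$, so each triangle of $T$ lifts to a genuine nondegenerate Euclidean triangle whose three interior angles sum to $\pi$. This remains valid even when two vertices of a triangle of $T$ happen to project to the same element of $V$, since the relevant angles are computed in the lifted triangle.

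The key step is to evaluate $\sum_{v\in V}a_v(r)$ by interchanging the order of summation. By definition $a_v(r)$ collects, with multiplicity, every angle at $v$ over all triangles incident to $v$; summing over $v$ therefore counts each of the three angles of each triangle exactly once, so that $\sum_{v\in V}a_v(r)=\pi|F|$. Hence $\sum_{v\in V}K_v(r)=2\pi|V|-\pi|F|$. To conclude the torus case I would invoke the two combinatorial identities $3|F|=2|E|$ (each triangle has three edges, each shared by two triangles) and Euler's formula $|V|-|E|+|F|=\chi(S)=0$; eliminating $|E|$ gives $|F|=2|V|$, whence $\sum_{v\in V}K_v(r)=2\pi|V|-2\pi|V|=0$.

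For the closed ball $D$ I would split vertices and edges into interior and boundary ones, writing $V=V_{\mathrm{int}}\sqcup V_{\partial}$ and $E=E_{\mathrm{int}}\sqcup E_{\partial}$. The same double-counting gives $\sum_{v}a_v(r)=\pi|F|$, but now $K_{D,v}$ subtracts $\pi$ instead of $2\pi$ at boundary vertices, so $\sum_{v}K_{D,v}(r)=2\pi|V_{\mathrm{int}}|+\pi|V_{\partial}|-\pi|F|$. The relations I would then use are $\chi(D)=|V|-|E|+|F|=1$, the incidence count $3|F|=2|E_{\mathrm{int}}|+|E_{\partial}|$, and the fact that $\partial D$ is a single cycle so that $|E_{\partial}|=|V_{\partial}|$. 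Eliminating $|E|$, $|E_{\mathrm{int}}|$ and $|V_{\mathrm{int}}|$ and substituting then yields $\sum_{v}K_{D,v}(r)=2\pi$.

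The computations are entirely routine, so there is no substantial obstacle; the only point requiring a little care is the bookkeeping of angles when a triangle of $T$ has a repeated vertex. There one must ensure the double-counting $\sum_{v}a_v(r)=\pi|F|$ is performed with multiplicity, so that each of the three angles of every triangle is counted precisely once. Everything else follows from Euler's formula.
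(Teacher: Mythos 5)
Your proof is correct and coincides with the standard argument behind this statement: the paper gives no proof of Proposition \ref{DGB} itself, quoting it from Chow--Luo, where it is established by exactly your double counting of angles (each Euclidean face contributing $\pi$) combined with Euler's formula and the edge--face incidence counts, including the disk case via $|E_{\partial}|=|V_{\partial}|$. The one implicit ingredient you might make explicit is that every face, even one with repeated vertices after projection, is a genuine nondegenerate Euclidean triangle with angle sum $\pi$, which is guaranteed by the existence of the circle configuration in Proposition \ref{angle_sym} (so the lengths $\ell(e;r)$ satisfy the triangle inequality); otherwise the bookkeeping is exactly as in the cited source.
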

In general, the curvature is bounded below by the function~$\phi$.
\begin{proposition}{\rm (\cite{CL}*{Proposition 4.1}, \cite{Th}*{\S 13.7}, \cite{MR})}\label{topcurv}
If a sequence $\{r(n)\}_{n\in \N}$ in~$\R^{V}_{>0}$ converges on $\R^{V}_{\geq0}$ as $n\to \infty$,
then we have
\[
\lim_{n\to \infty}\sum_{v\in U} K_{Z, v}(r(n))=\phi(U),
\qquad
U:=\{v \in V_Z \ |\ \lim_{n \to \infty} r_v(n)=0\}.
\]
Furthermore, it holds for any nonempty proper subset $U$ of $V$ and $r \in \R^{V}_{>0}$ that
\[
\sum_{v\in U} K_{Z,v}(r)>\phi(U).
\]
\end{proposition}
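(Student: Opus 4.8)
The plan is to reduce both assertions to a single algebraic identity that exhibits the gap $\sum_{v\in U}K_v(r)-\phi(U)$ as a sum of nonnegative \emph{angle defects}, one for each triangle meeting $U$. Writing $K_v(r)=2\pi-a_v(r)$ and sorting the triangles touching $U$ by the three classes $F_U^3,F_U^2,\mathrm{Lk}(U)=F_U^1$, I would expand the cone-angle sum as
\[
\sum_{v\in U}a_v(r)
=\sum_{f\in F_U^3}S_f(r)+\sum_{f\in F_U^2}P_f(r)+\sum_{f\in\mathrm{Lk}(U)}\theta^f_{v(f)}(r),
\]
where $S_f(r)$ is the full angle sum of $f$ and $P_f(r)$ is the sum of its two angles at vertices lying in $U$. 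Feeding this, together with $\chi(\tau_U)=|U|-(|F_U^2|+|F_U^3|)/2$, into the definitions of $K_v$ and $\phi(U)$ collapses everything to
\[
\sum_{v\in U}K_v(r)-\phi(U)
=\sum_{f\in F_U^3}\bigl(\pi-S_f(r)\bigr)
+\sum_{f\in F_U^2}\bigl(\pi-P_f(r)\bigr)
+\sum_{f\in\mathrm{Lk}(U)}\bigl(\pi-\Theta(e^f_{v(f)})-\theta^f_{v(f)}(r)\bigr).
\]
Both claims then follow by reading off this identity; the disk (boundary-vertex) version with $K_{Z,v}$ is obtained verbatim, the only change being that the constant $2\pi|U|$ is replaced according to the $\pi-a_v$ convention of Proposition~\ref{DGB}, which is absorbed into the topological term and leaves the three defect sums untouched.

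For the strict inequality I would argue that each of the three summands is nonnegative, and that at least one is strictly positive. The $F_U^3$-term satisfies $\pi-S_f(r)\geq0$, with equality in the Euclidean case and strict inequality in the hyperbolic case (angle sum at most $\pi$). The $F_U^2$-term is strictly positive because $P_f(r)$ equals $\pi$ minus the angle at the excluded vertex minus the hyperbolic defect, and the excluded angle is positive. The $\mathrm{Lk}(U)$-term is strictly positive because, by the monotonicity $\partial\theta_i/\partial r_i<0$ of Proposition~\ref{angle_sym} together with the boundary value $\theta^f_{v(f)}\to\pi-\Theta(e^f_{v(f)})$ as $r_{v(f)}\downarrow0$, one has $\theta^f_{v(f)}(r)<\pi-\Theta(e^f_{v(f)})$ for every positive radius. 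Since $U$ is a nonempty proper subset of the connected complex, some edge joins $U$ to its complement, so $F_U^2\cup\mathrm{Lk}(U)\neq\emptyset$ and the total is strictly positive.

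For the limiting equality I would show that, along a degenerating sequence $r(n)$ with $U=\{v:\lim_n r_v(n)=0\}$, every one of the three defects tends to $0$. For $f\in F_U^3$ all three radii vanish, the triangle shrinks, so $S_f(r(n))\to\pi$; for $f\in F_U^2$ the two radii at the $U$-vertices vanish while the third stays bounded away from $0$, forcing the remaining angle to $0$ and hence $P_f(r(n))\to\pi$; for $f\in\mathrm{Lk}(U)$ the single radius $r_{v(f)}(n)\to0$ drives $\theta^f_{v(f)}(r(n))\to\pi-\Theta(e^f_{v(f)})$. Summing, the right-hand side of the identity tends to $0$, which is exactly $\lim_n\sum_{v\in U}K_v(r(n))=\phi(U)$.

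The main obstacle is establishing the three angle-degeneration facts uniformly in both $\mathbb{E}^2$ and $\mathbb{H}^2$: the smallness of the spherical-excess-type defect $\pi-S_f$ for a shrinking triangle, the vanishing of the angle at a non-degenerating vertex when its two neighbours collapse, and, most delicately, the boundary limit $\theta^f_{v(f)}\to\pi-\Theta(e^f_{v(f)})$ with its one-sided strict approach. I expect to extract all three from the explicit formulas for $\ell(e;r)$ and for $\cos\theta^f_v(r)$ recorded in the preliminaries, controlling the hyperbolic case by its Euclidean limit as the edge lengths tend to $0$, and pinning down the strict monotone approach in the link case directly from Proposition~\ref{angle_sym}. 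A secondary, purely bookkeeping point is to verify that the boundary curvature convention of Proposition~\ref{DGB} enters the constant term only, so that the decomposition above transfers without change to the $K_{Z,v}$ setting.
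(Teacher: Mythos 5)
Your proposal is correct and follows essentially the same route as the paper: the paper cites Proposition~\ref{topcurv} to Chow--Luo, Thurston and Marden--Rodin, but its own proof of the parallel statement (Lemma~\ref{compati}) uses exactly your decomposition --- expanding $2\pi|U|-\sum_{v\in U}K_v(r)$ over the classes $F_U^3$, $F_U^2$, $\mathrm{Lk}(U)$ via $\chi(\tau_U)=|U|-(|F_U^2|+|F_U^3|)/2$, obtaining the strict inequality from the one-sided limits $\theta^f_{v(f)}\to\pi-\Theta(e^f_{v(f)})$ together with the monotonicity in Proposition~\ref{angle_sym} (the analogues of the paper's \eqref{23} and \eqref{1323}), and the limiting equality from the same angle-degeneration computations you outline. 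Your connectivity argument for why some defect term is strictly positive also matches the paper's, so no gap remains beyond the explicit cosine-formula verifications you already flag, which are carried out in the paper's proofs of Propositions~\ref{coin} and Lemma~\ref{compati}.
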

A curvature uniquely determines a metric, up to a scalar multiple if $\chi(S)=0$.
This follows from the fact that the curvature can be regarded as the gradient of a convex function.
To see this, we define an injective map 
$X_S:\R^{V}_{>0} \to \R^V$  by
\[
X_S((r_v)_{v\in V}):=
\begin{dcases}
(\log r_v)_{v\in V}, & \text{if $\chi(S)=0$},\\
\left(\log\tanh\frac{r_v}{2}\right)_{v\in V}, & \text{if $\chi(S)<0$}.
\end{dcases}
\]
Let $R_S:X_S(\R^V_{>0}) \to \R^V_{>0}$ be the inverse map of $X_S$.
\begin{proposition}{\rm (\cite{Th}, \cite{CL}*{Proposition 3.9, Corollary 3.11}, \cite{V}) } \label{convex}
There is a smooth convex function $\psi$ on $X_S(\R^V_{>0})$ such that $\nabla \psi=K \circ R_S$.
For $\chi(S)=0$, if we set 
\[
P:=\left\{r\in \R^{V}_{>0} \ |\ \prod_{v\in V} r_v =1\right\},\quad 
\]
then $\psi$ is strictly convex when restricted to $X_S(P)$ and $K$ is injective when restricted to~$P$.
On the other hand, if $\chi(S)<0$, then $\psi$ is strictly convex, consequently $K$ is injective.
\end{proposition}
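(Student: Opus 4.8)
The plan is to produce $\psi$ as a genuine potential in the coordinates $u:=X_S(r)$ and then extract convexity from a triangle-by-triangle analysis of its Hessian. First I would note that $X_S$ is chosen precisely so that $dr_v=\sigma_S(r_v)\,du_v$: when $\chi(S)=0$ one has $u_v=\log r_v$ and $dr_v=r_v\,du_v$, while when $\chi(S)<0$ one has $u_v=\log\tanh(r_v/2)$ and $dr_v=\sinh r_v\,du_v$. In particular $X_S(\R^V_{>0})$ equals $\R^V$ (resp.\ $(-\infty,0)^V$), which is convex and hence simply connected.

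Next I would verify that the $1$-form $\omega:=\sum_{v\in V}K_v(R_S(u))\,du_v$ is closed. Since each $\theta^f_v$ depends only on the radii of the three vertices of $f$, the chain rule gives $\partial K_v/\partial u_w=-\sum_{f:\,v,w\in f}(\partial\theta^f_v/\partial r_w)\,\sigma_S(r_w)$, and applying the symmetry relation $(\partial\theta^f_v/\partial r_w)\,\sigma_S(r_w)=(\partial\theta^f_w/\partial r_v)\,\sigma_S(r_v)$ from Proposition \ref{angle_sym} inside each triangle shows $\partial K_v/\partial u_w=\partial K_w/\partial u_v$. By the Poincar\'e lemma on the convex domain $X_S(\R^V_{>0})$ there is a smooth $\psi$, unique up to an additive constant, with $\nabla\psi=K\circ R_S$. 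Its Hessian in the $u$-coordinates is the symmetric matrix $(\partial K_v/\partial u_w)_{v,w}$, which decomposes as $\sum_{f\in F}H^f$, where $H^f$ is supported on the vertices of $f$ with positive diagonal entries $-(\partial\theta^f_v/\partial r_v)\sigma_S(r_v)$ and negative off-diagonal entries $-(\partial\theta^f_v/\partial r_w)\sigma_S(r_w)$, by the sign conditions of Proposition \ref{angle_sym}.

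For $\chi(S)=0$ I would use the scale invariance of Euclidean angles: differentiating $\theta^f_v(\lambda r)\equiv\theta^f_v(r)$ at $\lambda=1$ yields $\sum_w(\partial\theta^f_v/\partial r_w)\,r_w=0$, so every row of $H^f$ sums to zero. Hence $H^f$ is the Laplacian of the weighted complete graph on the three vertices of $f$ with positive weights, so it is positive semidefinite with kernel the constants on $f$. Summing over $F$ and using that $S$ is connected, the Hessian of $\psi$ is positive semidefinite with kernel exactly $\mathrm{span}\{(1,\dots,1)\}$. Because $X_S(P)$ is the hyperplane $\{\sum_v u_v=0\}$, whose tangent space meets this kernel only in $0$, the restriction $\psi|_{X_S(P)}$ is strictly convex; strict convexity then forces $\nabla\psi=K\circ R_S$ to be injective on $X_S(P)$, which is exactly the injectivity of $K$ on $P$.

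For $\chi(S)<0$ the same block decomposition and sign pattern hold, but scale invariance is lost, and I expect this to be the crux: I must promote each $H^f$ from positive semidefinite to positive definite. The sign pattern already yields weak diagonal dominance, so it suffices to show every row sum of $H^f$ is strictly positive, that is $\sum_w(\partial\theta^f_v/\partial r_w)\sinh r_w<0$ for each vertex $v$ of $f$; geometrically this says that $\theta^f_v$ strictly decreases as all three radii grow together. This is a genuinely hyperbolic monotonicity not contained in Proposition \ref{angle_sym}, and I would establish it by an explicit computation of $\theta^f_v$ from the hyperbolic cosine law and length formula recorded above (or, equivalently, by analysing the comparison triangle as it degenerates toward the ideal one). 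Granting it, each $H^f$ is strictly diagonally dominant with positive diagonal, hence positive definite by Gershgorin; since every vertex lies in some triangle, $\sum_f H^f$ is then positive definite on all of $(-\infty,0)^V$. Thus $\psi$ is strictly convex and $\nabla\psi=K\circ R_S$, and therefore $K$, is injective.
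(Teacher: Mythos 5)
The paper offers no proof of this proposition---it is quoted as a known result from \cite{Th}, \cite{CL} and \cite{V}---and your argument is essentially the Chow--Luo proof those citations refer to: the potential $\psi$ via the symmetry relation of Proposition \ref{angle_sym} plus the Poincar\'e lemma on the convex domain $X_S(\R^V_{>0})$, the per-triangle Hessian decomposition with the zero-row-sum Laplacian argument in the Euclidean case (the linear-algebra step the paper itself reproduces as Proposition \ref{LA} when proving Lemma \ref{injective}), and strict diagonal dominance in the hyperbolic case. The single step you defer, $\sum_w(\partial\theta^f_v/\partial r_w)\sinh r_w<0$, becomes---after applying the symmetry relation $(\partial\theta^f_v/\partial r_w)\sigma_S(r_w)=(\partial\theta^f_w/\partial r_v)\sigma_S(r_v)$ termwise---exactly $\sigma_S(r_v)\,\partial(\theta^f_1+\theta^f_2+\theta^f_3)/\partial r_v<0$, i.e.\ the strict growth of the hyperbolic triangle's area in each radius, which is a standard part of \cite{CL}*{Lemma 2.3}, so your plan closes as stated.
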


\subsection{Combinatorial Ricci Flow}
The \emph{combinatorial Ricci flow} on a weighted triangulation~$(T,\Theta)$ of $S$ is a family of metrics $\{r(t)\}_{t\in[0,T)}$ that satisfies 
\begin{equation*}
\frac{d}{dt}r_v(t)=-K_v(r(t))\sigma_S(r_v(t)),
\quad
\sigma_S(r_v):=\begin{dcases}
r_v, & \text{if\ $\chi(S)=0$},\\
\sinh r_v, & \text{if\ $\chi(S)<0$}.
\end{dcases}
\end{equation*}
If we set $x(t):=X_S(r(t))$,
then we have
\[
\frac{d}{dt}x_v(t)=-K_v(r(t))=-\nabla \psi(x(t)), 
\]
where $\psi$ is given in Proposition~\ref{convex}.
Thus the combinatorial flow can be regarded as the gradient flow of a convex function 
and the combinatorial Ricci flow exists for all time (see also \cite{CL}*{Proposition 3.4}).
Moreover, for $\chi(S)=0$, the product $\prod_{v\in V} r_v(t)$ is constant in  $t\geq0$ since we deduce from Proposition \ref{DGB} that 
\[
\frac{d}{dt}\prod_{v\in V} r_v(t)=-\left(\sum_{v\in V} K_v(t) \right)\prod_{v\in V} r_v(t)=0.
\]

%
As mentioned in Theorem \ref{CLT}, the combinatorial Ricci flow with an arbitrary initial data converges on $\R^{V}_{>0}$ at infinity if and only if (II) is valid.
Furthermore, Chow--Luo~\cite{CL}*{Theorems 1.1, 1.2} showed that
if the combinatorial Ricci flow converges on $\R^{V}_{>0}$ at infinity, 
then it converges exponentially fast to a circle pattern metric, where the decay rate depends on an initial data.

Let us conclude this section with a known result in the theory of gradient flows.
\begin{proposition}{\rm(cf. \cite{AGS}*{Theorem 4.3.2})} \label{AGS}
Let $h:\R^n \to \R$ be a smooth convex function and
$\xi:[0,\infty)\to \R^n$ a gradient flow of $h$.
It holds for any $\tau>0$ and $\xi^\ast\in \R^n$ that 
\[
|\nabla h(\xi(\tau))|^2 \leq 
|\nabla h(\xi^\ast)|^2 +\frac{1}{\tau^2}|\xi^\ast-\xi(0)|^2. 
\] 
\end{proposition}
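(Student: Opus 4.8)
The plan is to exhibit a one-parameter Lyapunov functional that is monotone along the flow and whose endpoint values encode exactly the two terms on the right-hand side. Write the gradient flow as $\xi'(t)=-\nabla h(\xi(t))$, so that the quantity to be bounded is $g(t):=|\nabla h(\xi(t))|^2=|\xi'(t)|^2$. First I would record two consequences of smoothness and convexity that drive everything. Differentiating gives $g'(t)=-2\langle \nabla h(\xi(t)),\nabla^2 h(\xi(t))\,\nabla h(\xi(t))\rangle\le 0$, because the Hessian $\nabla^2 h$ is positive semidefinite; hence the slope $g$ is nonincreasing. Moreover, the subgradient inequality $h(\xi^\ast)\ge h(\xi(t))+\langle \nabla h(\xi(t)),\xi^\ast-\xi(t)\rangle$ yields $\langle \nabla h(\xi(t)),\xi(t)-\xi^\ast\rangle\ge h(\xi(t))-h(\xi^\ast)$, which is the form of convexity I will use.

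The heart of the argument is the functional
\[
\Psi(t):=\frac{t^2}{2}\,g(t)+t\bigl(h(\xi(t))-h(\xi^\ast)\bigr)+\frac12|\xi(t)-\xi^\ast|^2 .
\]
I would differentiate it term by term, using $\tfrac{d}{dt}h(\xi(t))=-g(t)$ and $\tfrac{d}{dt}\tfrac12|\xi(t)-\xi^\ast|^2=-\langle\nabla h(\xi(t)),\xi(t)-\xi^\ast\rangle$. The two $\pm t\,g(t)$ contributions cancel, and the convexity inequality above shows that the surviving pair $\bigl(h(\xi(t))-h(\xi^\ast)\bigr)-\langle\nabla h(\xi(t)),\xi(t)-\xi^\ast\rangle$ is $\le 0$; together with $\tfrac{t^2}{2}g'(t)\le 0$ this gives $\Psi'(t)\le 0$. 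Hence $\Psi(\tau)\le \Psi(0)=\tfrac12|\xi^\ast-\xi(0)|^2$, that is,
\[
\frac{\tau^2}{2}\,g(\tau)+\frac12|\xi(\tau)-\xi^\ast|^2\le \frac12|\xi^\ast-\xi(0)|^2+\tau\bigl(h(\xi^\ast)-h(\xi(\tau))\bigr).
\]

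It remains to control the energy gap $h(\xi^\ast)-h(\xi(\tau))$, which has no a priori sign; this is the step that requires care, since naively discarding the distance terms loses the argument. I would instead keep $\tfrac12|\xi(\tau)-\xi^\ast|^2$, bound $h(\xi^\ast)-h(\xi(\tau))\le \langle\nabla h(\xi^\ast),\xi^\ast-\xi(\tau)\rangle\le |\nabla h(\xi^\ast)|\,|\xi^\ast-\xi(\tau)|$ by convexity at $\xi^\ast$ and Cauchy--Schwarz, and then apply Young's inequality $\tau|\nabla h(\xi^\ast)|\,|\xi^\ast-\xi(\tau)|\le \tfrac{\tau^2}{2}|\nabla h(\xi^\ast)|^2+\tfrac12|\xi^\ast-\xi(\tau)|^2$. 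Substituting this lets the two copies of $\tfrac12|\xi(\tau)-\xi^\ast|^2$ cancel, leaving $\tfrac{\tau^2}{2}g(\tau)\le \tfrac12|\xi^\ast-\xi(0)|^2+\tfrac{\tau^2}{2}|\nabla h(\xi^\ast)|^2$; dividing by $\tau^2/2$ gives the claim. The main obstacle is therefore twofold: guessing the precise weights in $\Psi$ (so that the $t\,g$ terms cancel and the initial value is exactly the squared distance), and recognizing that the residual energy gap must be absorbed against the leftover squared distance via Young's inequality rather than simply thrown away.
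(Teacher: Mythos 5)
Your proof is correct and is essentially the paper's argument repackaged: differentiating your Lyapunov functional $\Psi$ and integrating from $0$ to $\tau$ reproduces exactly the paper's chain of steps --- monotonicity of $|\nabla h(\xi(t))|^2$ via the positive semidefinite Hessian, the (equivalent of the) integration by parts of $\int_0^\tau t\,|\nabla h(\xi(t))|^2\,dt$, and the subgradient inequality along the flow --- arriving at the same intermediate bound $\frac{\tau^2}{2}|\nabla h(\xi(\tau))|^2+\frac12|\xi(\tau)-\xi^\ast|^2\leq \tau\{h(\xi^\ast)-h(\xi(\tau))\}+\frac12|\xi^\ast-\xi(0)|^2$. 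Your closing step (convexity at $\xi^\ast$, Cauchy--Schwarz, Young's inequality to absorb the energy gap against the leftover squared distance) is identical to the paper's, so there is no gap.
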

\begin{proof}
Since $\xi(t)$ is a gradient flow of $h$, we have $\frac{d}{dt}{\xi}(t)=-\nabla h(\xi(t))$,
and consequently
\begin{align*}
\frac{d}{dt} h(\xi(t))
&=\langle \nabla h(\xi(t)), \frac{d}{dt} \xi(t) \rangle
=-|\nabla h(\xi(t))|^2,\\
\frac{d}{dt}|\nabla h(\xi(t))|^2
&=2\langle D^2h(\xi(t)) \frac{d}{dt} \xi(t),\nabla h(\xi(t))\rangle
=-2\langle D^2h(\xi(t))\nabla h(\xi(t)),\nabla h(\xi(t))\rangle
\leq 0,
\end{align*}
where the inequality follows from the convexity of $h$.
Hence  $|\nabla h(\xi (t))|^2$  is nonincreasing, 
which leads to
\begin{align*}
\frac{\tau^2}2|\nabla h(\xi(\tau))|^2  
&\leq 
\int_0^\tau t|\nabla h(\xi(t))|^2 dt 
=-\int_0^\tau t \frac{d}{dt}h(\xi(t))  dt 
=-\tau h(\xi(\tau))  + \int_0^\tau h(\xi(t))  dt.
\end{align*}
Since the convexity of $h$ asserts
\begin{align*}
h(\xi^\ast)-h(\xi(t))
\geq 
\langle \nabla h(\xi(t)), \xi^\ast- \xi(t) \rangle 
=\langle  - \frac{d}{dt} \xi(t), \xi^\ast-\xi(t)\rangle
=\frac12 \frac{d}{dt} |\xi^\ast-\xi(t)|^2,
\end{align*}
we have
\begin{align*}
\int_0^\tau h(\xi(t)) dt
&\leq
\int_0^\tau \left\{h(\xi^\ast)- \frac12 \frac{d}{dt} |\xi^\ast-\xi(t)|^2\right\} dt 
=\tau h(\xi^\ast)-\frac12 |\xi^\ast-\xi(\tau)|^2 +\frac12 |\xi^\ast-\xi(0)|^2.
\end{align*}
Combining this with the above inequality implies
\begin{align*}
 \frac{\tau^2}2|\nabla h(\xi(\tau))|^2  
\leq
\tau\{h(\xi^\ast)-h(\xi(\tau))\}-\frac12 |\xi^\ast-\xi(\tau)|^2 +\frac12 |\xi^\ast-\xi(0)|^2.
\end{align*}
We apply the convexity of $h$ again to have 
\begin{align*}
\tau\{h(\xi^\ast)-h(\xi(\tau))\} 
&\leq
\tau \langle \nabla h(\xi^\ast), \xi^\ast-\xi(\tau)\rangle 
\leq \frac{\tau^2}{2} |\nabla h(\xi^\ast)|^2+\frac12|\xi^\ast-\xi(\tau)|^2.
\end{align*}
Substituting this into the above inequality and dividing with $2/\tau^2$ yields the proposition.
\end{proof}

\section{Infinitesimal description of degenerate circle patten metric}
Throughout this section, we fix a weighted triangulation $(T,\Theta)$ of $S$ such that 
$\phi(U) \leq 0$ holds for any $U\subset V$ and $Z_T=\{z\in Z\ | Z\in\mathcal{Z}\} \neq \emptyset$.
Then the sets defined by
\begin{equation}\label{T0}
V_0:=V \setminus Z_T,\quad
E_0:=\{e\in E\ |\ e \cap Z_T =\emptyset\},\quad
F_0:=\{f \in F \ | f \cap Z_T =\emptyset \}\cup \{f_Z\}_{Z\in\mathcal{C}(Z_T)}
\end{equation}
and $\Theta_0:=\Theta|_{E_0}$ 
form a weighted cell-division $(T_0,\Theta_0)$ of $S$ into triangles and quadrangles satisfying (II) and (IV).
For $n\geq 1$, let us inductively define $Z_{n-1}, Z_n,V_n $ by 
\begin{align*}
&
Z_{n-1}:=V\setminus \cup_{j=0}^{n-1} V_j,\quad
Z_n:=\{z\in Z\ |\ Z \subsetneq Z_{n-1}, Z \in \mathcal{Z}\}, \quad
V_n:=Z_{n-1} \setminus Z_n.
\end{align*}
We see that $Z_0=Z_T$.
There exists $N \in \N$ such that
\[
Z_N=\emptyset, \quad V=\cup_{n=0}^N V_n,
\]
and the family of sets  $\{V_n\}_{n=0}^N$ is mutually disjoint.
For each $v$, let $n(v)$ be a unique integer satisfying $v\in V_{n(v)}$.
Note that one vertex $v$ of $f\in F \cup \{f_Z\}_{Z\in\mathcal{Z}}$ is different from the remaining  vertices of $f$ if $n(v)\neq 0$. 

For each $Z\in\mathcal{Z}\cup\{V\}$, define 
\[
Z':=\{z\in U\ |\ U\subsetneq Z,\ U \in \mathcal{Z}\}, \quad
V_Z:=Z\setminus Z',\quad
V^Z:=\{v\in V\ |\ \text{$v$ is a vertex of $f_Z$}\},
\]
where we set $V^V:=\emptyset$.
Then the correspondence from $\{V_Z\}_{Z\in\mathcal{Z}\cup\{V\}} $ to $\{U \in \mathcal{C}(V_n)\}_{0\leq n\leq N}$ is bijective.
It turns out that $V'=Z_T$ and $V_V=V_0$.

\subsection{Key lemmas}
Fix an arbitrary  $Z\in\mathcal{Z}$. 
Moreover for each  $U\in\mathcal{C}(Z')\cap \mathcal{Z}_4$, 
we fix a choice of a diagonal edge $e_{U}$ in $f_{U}$, and divide $f_{U}$ into two triangles $f_{U}^1$, $f_{U}^2$ by $e_{U}$.
We define 
\[
E_{Z}:=\{e \in E \ |\ e\cap Z \neq \emptyset, e\cap Z' =\emptyset\}
    \cup \{e_{U}\}_{U\in \mathcal{C}(Z') \cap \mathcal{Z}_4}\cup\{e^f_{v(f)}\}_{f\in \mathrm{Lk}(Z)}
\]
and $\Theta_Z:E_Z \to [0,\pi/2]$ by $\Theta_Z(e)=\Theta(e)$ for $e\in E$, and $0$ otherwise.
For $j=1,2,3$ and $v\in V_Z$, put
\begin{align*}
&\check{F}_Z^j:=\{f \in F \ |\   |f \cap V_Z|=j,\ f\cap Z'= \emptyset\},\quad 
\check{F}_Z^j(v):=\{f \in \check{F}_Z^j \ |\   v\in f\},\\
&\hat{F}_Z^j:=\{f\in\{f_{U}\}_{U\in\mathcal{C}(Z')\cap \mathcal{Z}_3} \cup \{f_{U}^1,f_{U}^2\}_{U\in \mathcal{C}(Z')\cap \mathcal{Z}_4} |\  |f \cap V_Z |=j\}, \quad 
\hat{F}_Z^j(v):=\{f\in\hat{F}_Z^j |\  v\in f\}, \\
&F_Z^j:=\check{F}_Z^j \cup \hat{F}_Z^j,\quad F_Z^j(v):=\{f\in F_Z^j \ |\ v\in f \}.
\end{align*}
%
By definition of $\mathcal{Z}$,
there is a continuous map $c$ from a closed ball $D$ in $\R^2$ to the closure of~$f_Z$ so that 
the restriction of $c$ to the interior of $D$ is a homeomorphism to $f_Z$.
The preimages of 
\begin{align*}
V_D:=V_Z \cup V^Z, \quad
E_D:=E_Z,\quad
F_D:=\cup_{j=1}^3 F_Z^j
\end{align*}
under $c$ with $\Theta_D$ satisfying $\Theta_D \circ c=\Theta_Z$ determine a weighted triangulation $(T_D, \Theta_D)$ of $D$. 

In order to give an infinitesimal description of degenerate circle patten metrics,
let us define the curvature-like function $K_Z:\R^{V}_{>0} \to \R^{V_Z}$ as follows.
Since $\mathbb{H}^2$ is infinitesimally identified with $\R^2$, regardless of the Euler characteristic of $S$,
we define the length $L(e;r)$ of $e \in E_Z$ with endpoints $u,v\in V_Z$ with respect to $r\in \R^{V}_{>0}$ by
\[
L(e;r):=\sqrt{r_v^2+r_u^2+2r_vr_u \cos \Theta(e)}
\]
as well as $\chi(S)=0$.
Similarly, the angle $\theta_v^{f,3}(r)$ at $v$ in $f\in F_Z^3$ with respect to $r$ is defined~by
\[
\theta_{v}^{f,3}(r):
=\arccos\left(\frac{L(e;r)^2+L(e';r)^2-L(e_v^f; r)^2}{2L(e;r)L(e';r)}\right),
\]
where $e, e'$ are the edges in $f$ adjacent to $v$.
Moreover, given $f\in F_Z^2$ with vertices $v,u \in V_Z$ and $w\in V^Z$, 
define the angle $\theta_v^{f,2}(r)$ at $v$ in $f$ with respect to $r$  by
\[
\theta_{v}^{f,2}(r)
:=\arccos\left(\frac{r_{v}\cos \Theta_Z(e^f_u)-r_{u}\cos \Theta_Z(e^f_v)}{L(e^f_w; r)}\right).
\]
Finally, define the angle $\theta_v^{f,1}(r)$ at $v\in V_Z$ in $f \in F_Z^1$ with respect to $r$  by
\[
\theta^{f,1}_v(r):=\pi-\Theta_Z(e^f_v).
\]
We define $K_Z:\R^{V}_{>0} \to \R^{V_Z}$ by
\[
K_{Z,v}(r):=
2\pi
-\sum_{j=1}^3\sum_{f\in  F_Z^j(v)} \theta^{f,j}_v(r),
\]
which is invariant under uniform scalings of the metric.

We first prove that $K_{Z}$ is independent of the choice of diagonal edges.
In what follows, the adjacency, connectivity of vertices and the link of a subset of vertices are respectively defined with respect to $(T,\Theta)$, not $(T_D, \Theta_D)$.
\begin{proposition} \label{diagonal}
Fix $v \in V_Z$ and take $U\in \mathcal{C}(Z')\cap \mathcal{Z}_4$ such that $v\in f_{U}$.
Let $v_1,v_2,u$ be the vertices of $f_{U}$ other than $v$ such that $v_1, v_2$ are adjacent to $v$.
We divide $f_{U}$ into the two triangles $f_1, f_2$ $($resp.\ $f, f'$$)$ by the diagonal edge in $f_{U}$ not intersecting $v_i$ $($resp.\ $v$$)$,
where $v_i \in f_i$  for $i=1,2$ $($resp.\ $v\in f$$)$.
Then it holds for any $r\in \R^V_{>0}$ that 
\[
\theta_v^{f,j}(r)=\theta_v^{f_1, j_1}(r)+\theta^{f_2, j_2}(r),
\]
where $j,j_1, j_2 \in\{1,2,3\}$  satisfy  $f\in \hat{F}^j_Z$ and $f_i \in \hat{F}^{j_i}_Z$ for $i=1,2$.
\end{proposition}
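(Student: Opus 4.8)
The plan is to localize the entire statement to the quadrilateral $f_{U}$ and to analyze the circle configuration it carries. First I would record the crucial rigidity coming from $U\in\mathcal{C}(Z')\cap\mathcal{Z}_4$: by Proposition~\ref{CW} the four boundary edges of $f_{U}$ satisfy $\sum_{f\in\mathrm{Lk}(U)}\Theta(e^f_{v(f)})=(|\mathrm{Lk}(U)|-2)\pi=2\pi$, and since each weight lies in $[0,\pi/2]$ this forces \emph{every} side of $f_{U}$ to have weight exactly $\pi/2$. Thus, in any realizing configuration, the four circles $C_v,C_{v_1},C_u,C_{v_2}$ (in cyclic order) meet orthogonally along consecutive edges, while the two diagonals $vu$ and $v_1v_2$, having weight $0$, correspond to tangent circles of lengths $r_v+r_u$ and $r_{v_1}+r_{v_2}$.

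The geometric heart of the argument is a short coordinate computation showing that these two requirements are compatible and in fact rigid. Normalizing $O_v$ at the origin and imposing that $C_v\perp C_{v_1}$, $C_{v_1}\perp C_u$, $C_u\perp C_{v_2}$, $C_{v_2}\perp C_v$ together with tangency of $C_v,C_u$ (the diagonal through $v$), one finds
\[
O_v=(0,0),\qquad O_{v_1}=(r_v,r_{v_1}),\qquad O_u=(r_v+r_u,0),\qquad O_{v_2}=(r_v,-r_{v_2}),
\]
so that all four circles pass through the common point $P=(r_v,0)$ and, automatically, $C_{v_1}$ and $C_{v_2}$ are tangent as well, since $|O_{v_1}O_{v_2}|=r_{v_1}+r_{v_2}$. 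Hence both triangulations of $f_{U}$ realize the \emph{same} Euclidean quadrilateral, and the diagonal $vu$ issues from $O_v$ into the interior of the corner at $v$ because $O_{v_1},O_{v_2}$ lie on opposite sides of the line $vu$. Therefore the corner angle at $v$ splits additively, which is exactly the claimed identity in the case $v,v_1,v_2,u\in V_Z$, where all three angles are the genuine Euclidean ones.

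It remains to pass from this clean picture to the cases in which some of $v_1,v_2,u$ belong to $V^Z$, where the prescribed angles $\theta^{f,2}_v$ and $\theta^{f,1}_v$ are used instead. Here I would exploit that the configuration above is \emph{stable under sending the radius of a $V^Z$-vertex to infinity}: the direction of the ray $O_vO_u$ stays along the positive $x$-axis independently of $r_u$, and $C_u$ degenerates to the line $x=r_v$ through $P$, while $O_{v_1}=(r_v,r_{v_1})$ and $O_{v_2}=(r_v,-r_{v_2})$ do not move at all. Computing $\arccos$ of the relevant inner products and using that a diagonal edge has $\cos\Theta=1$ and a side has $\cos\Theta=0$, one checks directly that $\theta^{f,2}_v$ and $\theta^{f,1}_v$ are precisely the angles subtended at $O_v$ in this (possibly degenerate) common-point configuration; for instance $\theta^{f_1,2}_v=\arccos\big(r_v/\sqrt{r_v^2+r_{v_1}^2}\big)$ is the angle between $O_vO_{v_1}$ and the positive $x$-axis. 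Since the betweenness of the ray $O_vO_u$ is preserved in the limit, the additivity $\theta^{f,j}_v=\theta^{f_1,j_1}_v+\theta^{f_2,j_2}_v$ holds termwise in every case.

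I expect the main obstacle to be organizational rather than analytic: one must run through the possible memberships of $v_1,v_2,u$ in $V_Z$ versus $V^Z$, match each resulting triple $(j;j_1,j_2)$ to the correct piecewise formula, and confirm in each case that the formula equals the corresponding angle of the common-point configuration, including the fully degenerate case $\theta^{f,1}_v=\pi-\Theta_Z(e^f_v)=\pi$, which is recovered as $\pi/2+\pi/2$ from the two sub-triangles. The only genuinely geometric input is the rigidity computation above; everything else is continuity of $\arccos$ together with the observation that the rays from $O_v$ toward its neighbors are determined independently of the radii of the vertices in $V^Z$.
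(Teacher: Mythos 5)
Your proposal is correct, but it organizes the argument differently from the paper. Both proofs start from the same rigidity: since $U\in\mathcal{Z}_4$, Proposition~\ref{CW} gives $\sum_{f\in\mathrm{Lk}(U)}\Theta(e^f_{v(f)})=2\pi$ with four nonobtuse summands, so every side of $f_U$ has weight $\pi/2$ while the diagonals get weight $0$ under $\Theta_Z$. From there the paper proceeds purely algebraically: it evaluates the defining $\arccos$ formulas case by case in $j=1,2,3$ (everything reduces to $\cos\theta^{f_i,j_i}_v(r)=r_v/\sqrt{r_v^2+r_{v_i}^2}$ and its companions $-r_{v_1}/\sqrt{r_v^2+r_{v_1}^2}$, $0$, $(r_v^2-r_{v_1}r_{v_2})/\sqrt{r_v^2+r_{v_1}^2}\sqrt{r_v^2+r_{v_2}^2}$) and closes each case with the trigonometric addition formulae. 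You instead build the explicit common-point configuration $O_v=(0,0)$, $O_{v_1}=(r_v,r_{v_1})$, $O_u=(r_v+r_u,0)$, $O_{v_2}=(r_v,-r_{v_2})$, check that it realizes all orthogonality and tangency constraints at once (including the automatic tangency $|O_{v_1}O_{v_2}|=r_{v_1}+r_{v_2}$, so both diagonal subdivisions realize the same Euclidean quadrilateral), identify each $\theta^{f,j}_v$ with an angle at $O_v$ — with a $V^Z$-vertex contributing a fixed ray (vertical for $v_i\in V^Z$, the positive $x$-axis for $u\in V^Z$) — and obtain additivity from betweenness of the ray $O_vO_u$. This buys a genuine conceptual explanation of \emph{why} the identity holds, replaces the addition-formula step by a geometric one, and is the same picture that underlies the limits in Proposition~\ref{coin}; the paper's version is shorter and needs no geometric realization. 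One caution: your "radius to infinity" framing is only heuristic — what the proof actually requires, and what you do supply, is the \emph{exact} identification of the formulas $\theta^{f,2}_v$ and $\theta^{f,1}_v$ with angles to those fixed rays, which is the same finite case check ($v_1,v_2,u$ in $V_Z$ versus $V^Z$, matching each triple $(j;j_1,j_2)$) that the paper performs; you should state that check as a direct verification rather than as a limit, since the proposition asserts an identity for every fixed $r\in\R^V_{>0}$.
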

\begin{proof}
We first observe for $i=1,2$ that
\[
\Theta_Z(e^f_v)=\Theta_Z(e^{f_i}_{v_i})=0,\quad
\Theta_Z(e^{f_i}_v)=\Theta_Z(e^{f_i}_u)=\Theta_Z(e^f_{v_i})=\frac{\pi}{2}.
\]
In the case of $j=1$, we see that $\theta^{f,1}_v(r)=\pi$.
If $u\in V_Z$ (resp.\ $u\in V^Z$), then we find $j_i=2$ (resp. $j_i=1$) and 
\[
\cos \theta^{f_i,2}_v(r)
=\frac{r_v \cos\Theta_Z(e_u^{f_i})-r_u \cos\Theta_Z(e_v^{f_i})}{L(e_{v_i}^{f_i};r)}
=0
\quad
\left(\text{resp.}\ \theta^{f_i,1}_v(r)=\frac{\pi}{2}\right).
\]

Assume $j=2$ and $v_1 \in V_Z$. It turns out that 
\[
\cos \theta^{f,2}_v(r)
=\frac{r_v \cos\Theta_Z(e_{v_1}^{f})-r_{v_1} \cos\Theta_Z(e_v^{f})}{L(e_{v_2}^f;r)}
=\frac{-r_{v_1}}{\sqrt{r_v^2+r_{v_1}^2}}.
\]
If moreover $u\in V_Z$ $($resp.\ $u\in V^Z)$, then $(j_1,j_2)=(3,2)$ (resp.\ $(j_1,j_2)=(2,1)$) and
\begin{gather*}
\cos \theta^{f_1,3}_v(r)
=\frac{L(e^{f_1}_{v_1};r)^2+L(e^{f_1}_{u};r)^2-L(e^{f_1}_{v};r)^2}{2L(e^{f_1}_{v_1};r)L(e^{f_1}_{u};r)}
=\frac{r_v}{\sqrt{r_v^2+r_{v_1}^2}}, \quad \cos \theta^{f_2,2}_v(r)=0\\
\left(\text{resp.}\
\cos \theta^{f_1,2}_v(r)=\frac{r_v \cos\Theta_Z(e_{v_1}^{f_1})-r_{v_1} \cos\Theta_Z(e_v^{f_1})}{L(e_{u}^{f_1};r)}
=\frac{r_v}{\sqrt{r_v^2+r_{v_1}^2}}, \quad
\theta^{f_2,1}_v(r)=\frac{\pi}{2}
\right).
\end{gather*}

Finally, if $j=3$, then we find
\[
\cos \theta^{f,3}(r)
=\frac{L(e^{f}_{v_1};r)^2+L(e^{f}_{v_2};r)^2-L(e^{f}_{v};r)^2}{2L(e^{f}_{v_1};r)L(e^{f_1}_{v_2};r)}
=\frac{r_v^2-r_{v_1}r_{v_2}}{\sqrt{r_v^2+r_{v_1}^2}\sqrt{r_v^2+r_{v_2}^2}}.
\]
As well as the case of $j=1,2$, regardless of the position of $u$, 
we have
\[
\cos \theta^{f_i,j_i}_v(r)=\frac{r_v}{\sqrt{r_v^2+r_{v_i}^2}}.
\]
 These with the trigonometric addition formulae conclude the proposition.
\end{proof}

Given any $v \in V_Z$ and $U\in \mathcal{C}(Z')\cap\mathcal{Z}_4$ such that $v\in f_{U}$,
we denote by $e^{f_{U}}_v$ the diagonal edge in $f_U$ not intersecting $v$.
Let $\hat{f}_U$ be the triangle obtained by dividing $f_{U}$ by $e^{f_{U}}_v$ and containing $v$.
For $j=1,2,3$, define
\begin{align*}
\bar{F}_Z^j(v)
&:=\{ f\in\{f_{U}\}_{U\in\mathcal{C}(Z')}  \ |\ |\{u \in f\cap V_Z \ |\ \text{$u$ is adjacent to $v$}\}|=j-1\}.
\end{align*}
Then $f_U \in\bar{F}_{Z}^j(v)$ is equivalent to $\hat{f}_U \in \hat{F}^j_{Z}(v)$,
and it is natural to define $\theta^{f_U,j}_v(r):=\theta^{\hat{f}_U,j}_v(r)$. 
Proposition~\ref{diagonal} asserts 
\[
K_{Z,v}(r)=
2\pi
-\sum_{j=1}^3 \sum_{f\in  \check{F}_Z^j(v) \cup\bar{F}_Z^j(v)} \theta^{f,j}_v(r).
\]

\begin{definition}
A smooth curve $r(t):[0,\infty) \to \R^V_{>0}$ {\it degenerates of order $r\in \R^V_{>0}$}
if it converges on $\R^{V}_{\geq 0}$ at infinity and satisfies the following conditions:
\begin{itemize}
\item
We have $Z_T=\{z\in V\ |\  \lim_{t \to \infty} r_z(t)=0\}$ 
and $\lim_{t \to \infty} r_v(t)=r_v$ for any $v\in V_0$.
\item
For each $U\in\mathcal{Z}$, if we fix an arbitrary $v \in V_U$, 
then the limit of $\{({r_u(t)}/{r_{v}(t)})_{u\in V_U}\}_{t\geq0}$ at infinity exists and  the limit is parallel to the restriction $r|_{V_U}$ of $r$ to $V_U$.
\item
For any $U\in\mathcal{Z}\cup\{V\}$ and $v\in V_U, u\in U'$, we have $\lim_{t \to \infty} {r_u(t)}/{r_{v}(t)}=0$.
\end{itemize}
\end{definition}
\begin{proposition}\label{coin}
Fix $v\in V_Z, j=1,2,3$ and consider a degenerate curve $\{r(t)\}_{t\geq0}$ of order~$r\in \R^V_{>0}$.
For $f\in \check{F}_Z^j(v)$ and  $U\in\mathcal{C}(Z')$ such that $f_{U} \in\bar{F}_Z^j(v)$, we have
\begin{align}\label{out}
\lim_{t \to \infty} \theta_v^f(r(t))&=\theta^{f,j}_v(r), \\\label{in}
\lim_{t \to \infty} \sum_{v\in f'\in F, f' \subset f_{U}} \theta_v^{f'}(r(t))&=\theta^{f_U, j}_v(r).
\end{align}
\end{proposition}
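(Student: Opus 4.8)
The plan is to obtain both limits by an asymptotic analysis of the angle formulas of the previous section under the degeneration, exploiting two features. First, all of $\theta^{f,j}_v$ and the circle-pattern angles $\theta^f_v$ are invariant under uniform scaling of the metric, so I may normalise along the curve by dividing every radius by $r_v(t)$; the normalised radii then have prescribed limits dictated by the three defining properties of a degenerate curve, namely that the radii of $V_Z$ stay comparable to $r_v$ with ratios tending to a multiple of $r|_{V_Z}$, the radii of $Z'$ tend to $0$ relative to $r_v$, and the radii of the shallower vertices (those in $V^Z$, and more generally any vertex of smaller level) blow up relative to $r_v$. Second, I will use the infinitesimal identification of $\mathbb{H}^2$ with $\mathbb{E}^2$: since $\cosh r=1+r^2/2+O(r^4)$ and $\sinh r=r+O(r^3)$, the hyperbolic length $\ell(e;r)$ agrees with the Euclidean quantity $L(e;r)$ to leading order whenever both endpoint radii tend to $0$. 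Wherever all relevant radii vanish this reduces the hyperbolic computation ($\chi(S)<0$) to the Euclidean one ($\chi(S)=0$), which is exactly how $\theta^{f,j}_v$ is defined.

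For \eqref{out} I would argue by the value of $j$. When $j=3$ all three vertices of $f$ lie in $V_Z$, so all three radii vanish with convergent mutual ratios, and continuity of the Euclidean cosine rule (together with the infinitesimal identification in the hyperbolic case) gives $\theta^f_v(r(t))\to\theta^{f,3}_v(r)$. When $j=2$ the two vertices $v,u\in V_Z$ shrink together while the third vertex $w$ is shallower, so $r_w(t)/r_v(t)\to\infty$; dividing the cosine rule through by the large radius $r_w$, the divergent contributions of the two edges incident to $w$ cancel and the finite limit equals $(r_v\cos\Theta(e^f_u)-r_u\cos\Theta(e^f_v))/L(e^f_w;r)$, which is $\cos\theta^{f,2}_v(r)$. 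When $j=1$ only $v$ shrinks; the center of $C_v$ converges to an intersection point of the two shallower circles $C_u,C_w$, and since the inward radii at an intersection point of two circles meeting at angle $\Theta(e^f_v)$ enclose the supplementary angle, $\theta^f_v(r(t))\to\pi-\Theta(e^f_v)=\theta^{f,1}_v(r)$.

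For \eqref{in} I would first record the exact identity that, for each fixed $t$, the sum $\sum_{v\in f'\subset f_U}\theta^{f'}_v(r(t))$ equals the interior angle of the geodesic cell $f_U$ at its corner $v$: the triangles of $T$ lying in the disk $f_U$ and meeting $v$ form a fan joining the two boundary edges of $f_U$ at $v$, so their angles at $v$ add up to the corner angle, independently of the interior triangulation. It then remains to identify the limit of this corner angle with $\theta^{f_U,j}_v(r)=\theta^{\hat f_U,j}_v(r)$. The interior vertices of $f_U$ lie in $U\subset Z'$ and hence shrink faster than $v$ and than the corners of $f_U$; as they collapse, the developed picture of $f_U$ converges to the virtual geodesic triangle $\hat f_U$, whose side lengths are the $L$-lengths determined by the corner radii, and the corner angle at $v$ converges to $\theta^{\hat f_U,j}_v(r)$. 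Since $f_U$ carries the same nested degeneracy one level deeper, I would make this collapse rigorous by induction on the nesting depth of $Z$, the base case $Z'=\emptyset$ being vacuous for \eqref{in}; Proposition \ref{diagonal} then guarantees that the limiting angle is independent of the chosen diagonal and equals $\theta^{f_U,j}_v(r)$.

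The main obstacle is twofold. In \eqref{out}, and again inside \eqref{in}, the hyperbolic case with a shallower vertex whose radius stays bounded away from $0$ cannot be treated by rescaling to the Euclidean model and instead requires an honest first-order expansion of $\ell(e;r)$ in the small radius combined with the cancellations above; the delicate point is that the dominant balance in the cosine rule is between first-order terms in the small radius rather than the second-order terms that govern the fully infinitesimal regime. The deeper difficulty is the collapse argument for \eqref{in}: because $f_U$ carries several scales simultaneously, its corners possibly belonging to different levels $V_n$, one must show that the corner angle at $v$ becomes insensitive to the interior triangulation in the limit, and this is precisely where the induction on nesting depth, feeding the already-established statement for the lower-level cells bounding $f_U$, is indispensable.
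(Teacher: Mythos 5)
Your plan for \eqref{out} is sound and is essentially the paper's own argument: the paper runs your three cases $j=1,2,3$ through a single unified computation, writing $\cos\theta^f_v(r(t))$ as $N_E(t)/L(e^f_u;r(t))L(e^f_w;r(t))$, respectively $N_H(t)/S_{u,w}(t)S_{w,u}(t)$, multiplying by $r_u(t)r_w(t)/r_u(t)r_w(t)$, and tabulating the normalized limits; the hyperbolic subtlety you correctly flag (a shallow vertex $x$ with $n(x)=0$ whose radius stays bounded away from $0$) is absorbed there because the factor $\sinh r_x(t)/r_x(t)\to\sinh r_x/r_x$ appears in both $N_H$ and $S_{x,y}$ and cancels, so the first-order expansion you anticipate does go through exactly as you describe.

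The genuine gap is in \eqref{in}: your argument never uses the hypothesis $U\in\mathcal{Z}$, i.e.\ $\phi(U)=0$, and without it the limit cannot be identified with $\theta^{f_U,j}_v(r)$. Concretely, after the interior slivers vanish (your ``corner angle'' reduction, which is the paper's \eqref{red1}), the two surviving link triangles $f_1,f_2$ collapse onto a configuration in which the cluster $U$ shrinks to a single point $p$ lying simultaneously on $C_v$, $C_{w_1}$ and $C_{w_2}$; the angle of $C_{w_i}$ with $C_v$ at $p$ is the weight $\Theta_Z(e^{\hat f_U}_{w_k})$, so the angle between $C_{w_1}$ and $C_{w_2}$ at $p$ is $\pi-\Theta_Z(e^{\hat f_U}_{w_1})-\Theta_Z(e^{\hat f_U}_{w_2})$, and only the identity $\Theta_Z(e^{\hat f_U}_v)=\pi-(\Theta_Z(e^{\hat f_U}_{w_1})+\Theta_Z(e^{\hat f_U}_{w_2}))$ --- the paper's \eqref{equal}, which is exactly $\phi(U)=0$ via Proposition \ref{CW} --- forces the limiting distance between the centers of $C_{w_1},C_{w_2}$ to equal $L(e^{\hat f_U}_v;r)$, i.e.\ forces the collapsed picture to be the virtual triangle $\hat f_U$. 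Your assertion that ``the developed picture converges to the virtual geodesic triangle whose side lengths are the $L$-lengths'' silently assumes precisely this and is the whole content of the claim. Moreover, your proposed induction on nesting depth is both unnecessary and insufficient: the third bullet in the definition of a degenerate curve, applied to the pair $(Z,v)$, already gives $r_u(t)/r_v(t)\to 0$ for \emph{every} $u\in Z'$, however deeply nested, so the paper's one-step argument (angles of triangles in $F^2_U(v)$ tend to $0$; the two angles of $f_1,f_2$ converge to explicit values computed as in \eqref{out}; the trigonometric addition formula together with \eqref{equal} concludes) needs no recursion --- and no amount of recursion supplies the missing weight identity, because depth of nesting is not where the difficulty lies.
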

\begin{proof}
Let $u, w$ be the vertices of $f$ other than $v$.
%
We can assume that $n(w)\leq n(u) \leq n(v)$ without loss of generality.
Note that $n(v) \neq 0$.
For $x=u,w$, there exists $\rho_x \in \R_{\geq 0}$ such that 
\[
\lim_{t \to \infty} \frac{r_v(t)}{r_x(t)}=\rho_{x}.
\]
We have $\rho_x >0$ if and only if $n(x)=n(v)$, in which case $\rho_x r_x=r_v$ holds.
The condition $j=1$ is equivalent to $n(w)\leq n(u)<n(v)$.
We similarly see  that 
$j=2$ (resp. $j=3$) if and only if $n(w)< n(u)=n(v)$ (resp. $n(w)=n(u)=n(v)$).

Let us define
\begin{align*}
N_E(t):&=r_v(t)^2-r_u(t) r_w(t) \cos\Theta(e^f_v)+r_v(t)\sum_{(x,y)=(u,w),(w,u)} r_x(t) \cos\Theta_Z(e^f_y), \\
N_H(t):&=\cosh r_u(t)\cosh r_w(t) \sinh^2 r_v(t)-\sinh r_u(t)\sinh r_w(t)\cos\Theta_Z(e^f_v)\\
       &+\cosh r_v(t)\sinh r_v(t) \sum_{(x,y)=(u,w),(w,u)} \cosh r_x(t) \sinh r_y(t)\cos\Theta_Z(e^f_x)\\
       &+\sinh^2 r_v(t) \sinh r_u(t)\sinh r_w(t)\cos\Theta_Z(e^f_u)\cos\Theta_Z(e^f_w),\\
S_{x,y}(t):&=\left\{\left(\cosh r_v(t)\cosh r_x(t)+\sinh r_v(t)\sinh r_x(t)\cos\Theta(e^f_y;r(t))\right)^{2} -1\right\}^{1/2},
\end{align*}
where $(x,y)=(u,w),(w,u)$.
We then have
\[
\cos\theta^{f}_v(r(t))
=\begin{dcases}
{N_E(t)}/L(e^f_u;r(t)) \cdot L(e^f_w;r(t)), & \text{if $\chi(S)=0$,}\\
{N_H(t)}/S_{u,w}(t)\cdot S_{w,u}(t), & \text{if $\chi(S)<0$.}
\end{dcases}
\]
For  $(x,y)=(u,w),(w,u)$, we observe that 
\begin{align*}
&\lim_{t \to \infty}\frac{L(e^f_y;r(t))}{r_x(t)}
=\begin{dcases}
1, & \text{if $n(x)<n(v)$},\\
{L(e_y^f;r)}/{r_x}, & \text{if $n(x)=n(v)$},
\end{dcases}\\
&
\lim_{t \to \infty}  \frac{\sinh r_v(t)}{r_x(t)}
=\rho_x,\quad
\lim_{t \to \infty}  \cosh r_v(t)=1, \quad
\lim_{t \to \infty}\frac{\sinh r_x(t)}{r_x(t)}
=\begin{dcases}
{\sinh r_x}/{r_x}, & \text{if $n(x)=0$},\\
1, & \text{if $n(x)>0$},
\end{dcases}\\
&\lim_{t \to \infty}\frac{S_{x,y}(t)}{r_x(t)}
=\begin{dcases}
{\sinh r_x}/{ r_x}, & \text{if $n(x)=0$},\\
1, & \text{if $n(x)<n(v)$},\\
{ L(e_y^f;r)}/{r_x}, & \text{if $n(x)=n(v)$}.
\end{dcases}
\end{align*}
Substituting this into 
\[
\lim_{t\to\infty}\cos\theta^{f}_v(r(t))=\lim_{t\to\infty} \cos\theta^{f}_v(r(t)) \cdot \frac{r_u(t)r_w(t)}{r_u(t)r_w(t)}
\]
completes the proof of \eqref{out}.

To prove \eqref{in}, let us classify triangles $f'$ satisfying $v\in f'\in F, f' \subset f_{U}$ into
\[
F_{U}^{j'}(v):=\{f'\in F\ |\ v\in f', \ | f'\cap U|=j'  \}
\]
for ${j'}=1,2$, which asserts 
\[
\sum_{v\in f'\in F, f' \subset f_{U}} \theta_v^{f'}(r(t))
=
\sum_{f'\in F_{U}^1(v)} \theta_v^{f'}(r(t))+\sum_{f'\in F_{U}^2(v)} \theta_v^{f'}(r(t)).
\]
It turns out that $F_{U}^1(v)\subset \mathrm{Lk}(U)$ and $F_{U}^1(v)$ consists of two elements $f_1, f_2$.
For $f'\in F_{U}^2(v)$, the vertices $u',w'$ in $f'$ other than $v$ are in $U$. 
Since we find for $(x,y)=(u',w'),(w',u')$ that 
\begin{gather*}
\lim_{t \to \infty} \frac{r_x(t)}{r_v(t)}=0, \quad
\lim_{t \to \infty}\frac{L(e^{f'}_x;r(t))}{r_v(t)}=1, \quad
\lim_{t \to \infty}\frac{S_{x,y}(t)}{r_v(t)}=1,
\end{gather*}
a similar argument to \eqref{out} yields
\begin{align*}
\lim_{t \to \infty} \cos \theta^{f'}_v(r(t))  
=\lim_{t \to \infty} \cos \theta^{f'}_v(r(t)) \cdot \frac{r_v(r)^2}{r_v(t)^2}=1,
\end{align*}
namely, $\lim_{t \to \infty} \theta^{f'}_v(r(t))=0$.
We therefore conclude
\begin{equation}\label{red1}
\lim_{t\to \infty}\sum_{v\in f'\in F, f' \subset f_{U}} \theta_v^{f'}(r(t))
=
\lim_{t\to \infty} \left\{\theta_v^{f_1}(r(t))+\theta_v^{f_2}(r(t))\right\}.
\end{equation}
Let us set $\hat{f}_{U}$ as $f_{U}$ itself if $U \in \mathcal{C}(Z') \cap \mathcal{Z}_3$, 
and as the triangle obtained by dividing $f_{U}$ by~$e^{f_{U}}_v$ and containing $v$ if $U\in\mathcal{C}(Z')\cap \mathcal{Z}_4$.
%
For $i=1,2$, let $u_i, w_i$ be the vertices of $f_i\in F_{U}^1(v)$ other than $v$ such that $u_i\in U$ and $w_i \in V_Z \cup V^Z$, hence $n(w_i)\leq n(v)<n(u_i)$.
Then $w_1, w_2$ become the vertices of $\hat{f}_{U}$, and $e_{u_i}^{f_i}=e_{w_k}^{\hat{f}_{U}}$ for $(i,k)=(1,2),(2,1)$.
By $U\in \mathcal{Z}$, we have 
\begin{equation}\label{equal}
\Theta_Z(e^{\hat{f}_{U}}_v)=\pi-(\Theta_Z(e^{\hat{f}_U}_{w_1})+\Theta_Z(e^{\hat{f}_U}_{w_2})).
\end{equation}
Arguing similarly for \eqref{out}, we have for $(i,k)=(1,2),(2,1)$ that 
\[
\lim_{t\to \infty} \cos \theta^{f_i}_v (r(t))
=\lim_{t\to \infty} \cos \theta^{f_i}_v (r(t)) \cdot\frac{r_{v}(t)r_{w_i}(t)}{r_{v}(t)r_{w_i}(t)}
=
\begin{dcases}
\cos\Theta_Z(e_{w_k}^{\hat{f}_{U}}), & \text{if $n(w_i)<n(v)$},\\
\frac{r_v+r_{w_i}\cos\Theta_Z(e_{w_k}^{\hat{f}_{U}})}{L(e_{w_k}^{\hat{f}_{U}};r)}, & \text{if $n(w_i)=n(v)$.}
\end{dcases}
\]
Since the condition $j=1$ is equivalent to 
$n(w_1),n(w_2)<n(v)$, the trigonometric addition formulae with \eqref{equal} imply \eqref{in}.
Similarly, the condition $j=2$ (resp. $j=3$) is equivalent to 
$\min\{n(w_1), n(w_2)\}<\max\{n(w_1), n(w_2)\}=n(v)$ (resp. $n(w_1)=n(w_2)=n(v)$), 
and we have the desired results.
\end{proof}
The relations \eqref{out}\eqref{in} hold true for $U\in\mathcal{C}(V')=\mathcal{C}(V_0)$.
\begin{proposition}\label{coin0}
Fix $v\in V_0$ and consider a degenerate curve $\{r(t)\}_{t\geq0}$ of order $r\in \R^V_{>0}$.
For $f\in F\cap F_0$ with $v\in f$ and $U\in\mathcal{C}(V_0)$ with $v\in f_U$, we have
\begin{align} \label{out0}
\lim_{t \to \infty} \theta^{f}_v(r(t))=\theta^{f}_v(r), \\ \label{in0}
\lim_{t \to \infty} \sum_{v\in f'\in F, f' \subset f_{U}} \theta_v^{f'}(r(t))
=\theta^{f_U}_v(r),
\end{align}
where we use the notation $\theta^{\tilde{f}}_v$ for the sum of angles at $v$ in $\tilde{f}$ 
if $\tilde{f}$ contains an edge whose both endpoints are $v$.
\end{proposition}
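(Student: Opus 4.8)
The plan is to treat Proposition~\ref{coin0} as the outermost instance of Proposition~\ref{coin}, namely the case $Z=V$, for which $V_Z=V_V=V_0$ and $Z'=V'=Z_T$; the relevant degenerating clusters are then the components $U\in\mathcal{C}(V')=\mathcal{C}(Z_T)$, each of which lies in $\mathcal{Z}$ and so collapses to a well-defined cell $f_U$. Two features separate this level from Proposition~\ref{coin}. First, every $v\in V_0$ has $n(v)=0$, hence $\lim_{t\to\infty}r_v(t)=r_v$ is finite and positive; I would therefore work directly with the genuine, $\chi(S)$-dependent length and angle formulas and argue by continuity, rather than invoking the infinitesimal identification of $\mathbb{H}^2$ with $\R^2$ that was forced on the vanishing vertices in Proposition~\ref{coin}. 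Second, the remark that $v$ is distinct from the remaining vertices of a face holds only when $n(v)\neq0$; at the present level $v$ may appear as two corners of $f$ or of $f_U$, which is precisely why the statement interprets $\theta^{\tilde f}_v$ as a sum of angles. All limits below are thus taken corner by corner.

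For \eqref{out0}, the hypothesis $f\in F\cap F_0$ gives $f\cap Z_T=\emptyset$, so every vertex of $f$ lies in $V_0$ and its radius converges to the corresponding positive entry of $r$. By the cosine rule recorded after Proposition~\ref{angle_sym}, each corner angle of $f$ at $v$ is a composition of the smooth length $\ell(\cdot\,;\cdot)$ with $\arccos$, hence a continuous function of the three radii on $(0,\infty)^3$; admissibility of the limiting configuration is guaranteed by Proposition~\ref{angle_sym}. Passing to the limit in each corner and summing over the at most two corners at $v$ yields $\lim_{t\to\infty}\theta^f_v(r(t))=\theta^f_v(r)$.

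For \eqref{in0} I would follow the proof of \eqref{in} mutatis mutandis, partitioning the triangles $f'\in F$ with $v\in f'$, $f'\subset f_U$ by $|f'\cap U|\in\{1,2\}$ (using $v\notin U$). When $|f'\cap U|=2$ the unique corner outside $U$ is $v$, and the side opposite $v$ joins two vertices of $U$ whose radii vanish; since the two sides at $v$ tend to $r_v>0$, the opposite side shrinks to $0$ and the law of cosines forces $\theta^{f'}_v(r(t))\to0$. When $|f'\cap U|=1$ only the two boundary triangles $f_1,f_2$ remain, each sharing with $U$ one vanishing vertex $u_i$ and having its last vertex $w_i$ in $V_0$ (as $V^V=\emptyset$). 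As in Proposition~\ref{coin}, I would multiply and divide $\cos\theta^{f_i}_v(r(t))$ by $r_{u_i}(t)$ to absorb the vanishing vertex while $v,w_i$ keep their positive radii; the limit is the cosine of the corner angle of $f_U$ at $v$ in the actual $\chi(S)$-geometry, and the angle-sum relation \eqref{equal} for $U\in\mathcal{Z}$ together with the trigonometric addition formula combines the two boundary contributions into $\theta^{f_U}_v(r)$. If $v$ occupies several corners of $f_U$, I would repeat this at each such corner and add, matching the sum-of-angles reading of $\theta^{f_U}_v$.

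The analytic content---continuity of the angles at nonvanishing radii, and the single rescaling by $r_{u_i}(t)$ to handle one vanishing vertex---is identical in spirit to Proposition~\ref{coin}, so I expect the real difficulty to be combinatorial rather than analytic: once $n(v)=0$ allows $v$ to coincide with other vertices of $f$ or $f_U$, one must keep the bookkeeping honest. Concretely, the crux is to verify that exactly the triangles with $|f'\cap U|=2$ give vanishing angles, that precisely two boundary triangles survive at each corner of $f_U$ at $v$, and that \eqref{equal} continues to control the surviving angle even when a loop edge at $v$ makes the sum-of-angles interpretation unavoidable.
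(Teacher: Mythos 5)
Your proposal is correct and takes essentially the same route as the paper's proof: \eqref{out0} by continuity of the genuine $\chi(S)$-dependent length and angle formulas, and \eqref{in0} by reusing the partition \eqref{red1} and the angle-sum relation \eqref{equal} from the proof of \eqref{in}, computing the limits of the two surviving boundary triangles in the actual geometry, combining them via the addition formulae, and treating coincident vertices corner by corner exactly as the paper's closing remark indicates. The only cosmetic difference is your rescaling of $\cos\theta^{f_i}_v(r(t))$ by $r_{u_i}(t)$, which is an unnecessary (but harmless) no-op here since $r_v(t), r_{w_i}(t)$ converge to positive limits, so the paper passes to the limit directly.
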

\begin{proof}
Suppose that all the vertices of $f,f_U$ are different from each other.
The relation~\eqref{out0}  follows from the continuity of the trigonometric and hyperbolic functions.
To prove \eqref{in0}, we use the same notations in the proof of \eqref{in}.
Then \eqref{red1} and \eqref{equal} hold.
By~$V^V=\emptyset$, we have  $n(w_i)=n(v)=0<n(u_i)$ and 
for $(i,k)=(1,2),(2,1)$ that
\begin{align*}
\lim_{t\to \infty}\cos\theta^{f_i}_v(r(t))
=
\begin{dcases}
\frac{r_v+r_{w_i} \cos\Theta_Z(e_{w_k}^{\hat{f}_{U}})}{\ell(e_{w_k}^{\hat{f}_{U}};r)}, &\text{if $\chi(S)=0$},\\
\frac{\cosh r_{w_i}\sinh r_{v}+\cosh r_{v}\sinh r_{w_i}\cos\Theta_Z(e_{w_k}^{\hat{f}_{U}})}{\sinh\ell(e_{w_k}^{\hat{f}_{U}};r)},  &\text{if $\chi(S)<0$}.
\end{dcases}
\end{align*}
This and the trigonometric addition formulae with \eqref{equal} complete the proof of \eqref{in0}.

The case that some vertices of $f,f_U$ coincide is proved similarly.
\end{proof}
\begin{corollary}\label{outin0}
Let $\{r(t)\}_{t\geq0}$ be a degenerate curve of order $r$. It holds for $v\in V_Z$ that 
\begin{equation}\label{Z}
\lim_{t \to \infty} K_{v}(r(t))=K_{Z,v}(r).
\end{equation}
For the curvature $K_V$ on the weighted cell-division $(T_0,\Theta_0)$ of $S$, 
we have for $v\in V_0$ that 
\begin{equation}\label{0}
\lim_{t \to \infty} K_{v}(r(t))=K_{V,v}(r).
\end{equation}
\end{corollary}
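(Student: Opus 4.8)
The plan is to write the genuine cone angle as $a_v(r(t)) = \sum_{v\in f\in F} \theta_v^f(r(t))$, so that $K_v(r(t)) = 2\pi - a_v(r(t))$, and then pass to the limit $t\to\infty$ term by term after grouping the triangles incident to $v$ appropriately. The limits of the individual angular contributions have already been computed in Propositions \ref{coin} and \ref{coin0}; the only work left is to organize the cone angle so that each group matches one of the right-hand sides in \eqref{out}, \eqref{in}, \eqref{out0} or \eqref{in0}, and to recognize the resulting total as $2\pi - K_{Z,v}(r)$ (resp.\ $2\pi - K_{V,v}(r)$).

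To prove \eqref{Z}, fix $v\in V_Z$ and partition $\{f\in F : v\in f\}$ into the triangles that do not meet $Z'$ and those that do. A triangle of the first kind lies in $\check{F}_Z^j(v)$ for $j = |f\cap V_Z|$, and \eqref{out} gives $\theta_v^f(r(t)) \to \theta_v^{f,j}(r)$. A triangle of the second kind carries a vertex of some component $U\in\mathcal{C}(Z')$ and hence lies inside the cell $f_U$; collecting all such triangles by the component they meet, the inner contribution at $v$ is $\sum_{U}\sum_{v\in f'\in F,\,f'\subset f_U}\theta_v^{f'}(r(t))$, where $U$ ranges over $\mathcal{C}(Z')$ with $v\in f_U$. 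By \eqref{in} each inner block converges to $\theta_v^{f_U,j}(r)$, with $j$ fixed by $f_U\in\bar{F}_Z^j(v)$. Adding the two kinds of limits yields $\sum_{j}\sum_{f\in\check{F}_Z^j(v)\cup\bar{F}_Z^j(v)}\theta_v^{f,j}(r)$, which by the identity recorded just after Proposition \ref{diagonal} equals $2\pi - K_{Z,v}(r)$; subtracting from $2\pi$ gives \eqref{Z}.

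The proof of \eqref{0} is identical in spirit, with $Z'$ replaced by $Z_T = V'$ and $V_Z$ by $V_0$. For $v\in V_0$ the triangles not meeting $Z_T$ are exactly those in $F\cap F_0$ and contribute $\theta_v^f(r)$ in the limit by \eqref{out0}, while the triangles meeting $Z_T$ are grouped inside the cells $f_U$, $U\in\mathcal{C}(Z_T)$, each block converging to $\theta_v^{f_U}(r)$ by \eqref{in0}. Since the cone angle of the cell-division $(T_0,\Theta_0)$ at $v$ is precisely $\sum_{v\in f\in F\cap F_0}\theta_v^f(r) + \sum_{U}\theta_v^{f_U}(r)$ (this is how $F_0$ is built from the triangles of $F$ and the cells $\{f_Z\}_{Z\in\mathcal{C}(Z_T)}$), the limit of $a_v(r(t))$ equals this cone angle, and hence $\lim_{t\to\infty}K_v(r(t)) = K_{V,v}(r)$.

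The step requiring the most care is the bookkeeping behind the partition: I must check that every triangle of $F$ incident to $v$ falls into exactly one block, i.e.\ that a triangle meeting $Z'$ (resp.\ $Z_T$) is contained in the cell $f_U$ of a unique component $U$, and that no triangle inside such an $f_U$ is instead captured by $\check{F}_Z(v)$ or counted twice. This is where the structure of the cells $f_U$ for $U\in\mathcal{Z}$ enters, in particular that the triangles of $F$ inside $f_U$ incident to a boundary vertex $v$ all carry a vertex of $U$, which is exactly the decomposition into $F_U^1(v)\cup F_U^2(v)$ already exploited in the proof of \eqref{in}. Granting this, the grouping is a genuine partition and the termwise passage to the limit is legitimate, completing both \eqref{Z} and \eqref{0}.
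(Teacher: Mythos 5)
Your proposal is correct and takes essentially the same route as the paper: the paper's one-line ``direct computation'' expands $K_v(r(t))-K_{Z,v}(r)$ via exactly your partition of the triangles incident to $v$ into $\check{F}_Z^j(v)$ and the blocks lying inside the cells $f_U$ with $f_U\in\bar{F}_Z^j(v)$ (using the identity for $K_{Z,v}$ recorded after Proposition \ref{diagonal}), and then passes to the limit termwise by Propositions \ref{coin} and \ref{coin0}. The bookkeeping you flag---that each triangle at $v$ meeting $Z'$ (resp.\ $Z_T$) lies in the cell $f_U$ of a unique component $U$, and that triangles inside $f_U$ incident to $v$ carry a vertex of $U$, so the grouping is a genuine partition---is exactly what the paper leaves implicit.
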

\begin{proof}
A direct computation provides
\begin{align*}
&K_{v}(r)-K_{Z,v}(r(t)) \\
&=\sum_{j=1}^3
\left\{
\sum_{f\in \check{F}^j_Z(v)} \left(\theta^{f,j}_v(r)-\theta_v^f(r(t))\right)
+
\sum_{{f}_{U}\in\bar{F}_Z^j(v)}\left(\theta^{f_U,j}_v(r)-\sum_{ v\in f\in F, f \subset f_{U}} \theta_v^f(r(t))\right)
\right\}.
\end{align*}
Applying Proposition \ref{coin} leads to \eqref{Z}.
Similarly, \eqref{0} follows from Proposition \ref{coin0}.
\end{proof}

For $U\in \mathcal{Z}\cup\{V\}$, since $K_{U}(r)$ depends only on $r|_{V_U}$, we can regard $K_U:\R^{V_U}_{>0} \to \R^{V_U}$.
The following corollary immediately follows from the proofs of Propositions~\ref{coin},~\ref{coin0}.
\begin{corollary}\label{outin}
Given $U\in\mathcal{Z}\cup\{V\}$ and $v\in V_U$, 
we assume that a curve $\{r(t)\}_{t\geq0}$ satisfies  
\[
\lim_{t \to \infty} \frac{r_u(t)}{r_{v}(t)}=0, \quad
\lim_{t \to \infty} \frac{r_v(t)}{r_{w}(t)}=0
\]
for any $u\in U', w\in V^U$.
Moreover for any $u\in V_U$, if the limit 
\[
\rho_u:=
\begin{dcases}
\lim_{t\to\infty} \frac{r_u(t)}{r_{v}(t)}, &\text{if $U\in \mathcal{Z}$},\\
\lim_{t\to\infty} r_u(t),  &\text{if $U=V$}
\end{dcases}
\]
exists and $\rho_u>0$,
then we have
\[
\lim_{t \to \infty} K_{u}(r(t))=K_{U,u}((\rho_x)_{x\in V_U}).
\]
\end{corollary}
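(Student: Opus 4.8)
The plan is to derive the corollary by re-examining the proofs of Propositions~\ref{coin} and~\ref{coin0} and observing that the hypothesis that $\{r(t)\}_{t\geq0}$ be a degenerate curve of order~$r$ was never used in full strength: those arguments invoked only the \emph{local} asymptotics of the radii at the three consecutive levels $U'$, $V_U$, $V^U$ surrounding the fixed $v\in V_U$, together with the convergence of the ratios internal to $V_U$. These are precisely the data assumed here, with $(\rho_x)_{x\in V_U}$ taking over the role previously played by $r|_{V_U}$.

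First I would fix $u\in V_U$ with $\rho_u>0$ and, exactly as in the proof of Corollary~\ref{outin0}, partition the triangles of $T$ containing $u$ into those having no vertex in $U'$ and those subdividing a cell $f_W$ with $W\in\mathcal{C}(U')$, so that
\[
K_u(r(t))=2\pi-\sum_{\substack{f\in F,\ u\in f\\ f\cap U'=\emptyset}}\theta_u^f(r(t))
-\sum_{W\in\mathcal{C}(U')}\ \sum_{\substack{u\in f'\in F\\ f'\subset f_W}}\theta_u^{f'}(r(t)).
\]
The crucial geometric remark is that every triangle $f\in F$ containing $u$ lies in the closure of $f_U$, because $u$ is interior to $f_U$; hence the vertices of such an $f$, and likewise those of each $f_W$ with $W\in\mathcal{C}(U')$, lie only in the three levels $U'$, $V_U$, $V^U$. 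Consequently the ratio estimates driving the limits \eqref{out}, \eqref{in} (for $U\in\mathcal{Z}$) and \eqref{out0}, \eqref{in0} (for $U=V$) involve only the comparisons $r_x(t)/r_u(t)\to0$ for $x\in U'$, $r_u(t)/r_w(t)\to0$ for $w\in V^U$, and $r_x(t)/r_u(t)\to\rho_x/\rho_u\in(0,\infty)$ for $x\in V_U$, all of which are furnished by the hypotheses (the last two using $\rho_u>0$).

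Granting this, each angle limit established in Propositions~\ref{coin} and~\ref{coin0} holds verbatim with $r$ replaced by $(\rho_x)_{x\in V_U}$, giving $\theta_u^f(r(t))\to\theta_u^{f,j}((\rho_x)_{x\in V_U})$ for $f$ with no vertex in $U'$ and $\sum_{u\in f'\subset f_W}\theta_u^{f'}(r(t))\to\theta_u^{f_W,j}((\rho_x)_{x\in V_U})$ for $W\in\mathcal{C}(U')$; summing over the two families reproduces the computation in Corollary~\ref{outin0} and yields $\lim_{t\to\infty}K_u(r(t))=K_{U,u}((\rho_x)_{x\in V_U})$. The one delicate point, and the one I regard as the main obstacle, is to justify replacing the simultaneous, global scale control enjoyed by a degenerate curve of order~$r$ with the purely local three-level separation assumed here. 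This is exactly where the observation that every triangle incident to $u$ is contained in $f_U$ is decisive: it confines every scale comparison appearing in the proofs of Propositions~\ref{coin} and~\ref{coin0} to the levels $U'$, $V_U$, $V^U$, thereby reducing the general curve to the configuration already treated.
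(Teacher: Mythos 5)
Your proposal is correct and takes essentially the same route as the paper: the paper's entire proof of Corollary~\ref{outin} is the remark that it ``immediately follows from the proofs of Propositions~\ref{coin},~\ref{coin0},'' and your write-up simply makes that inspection explicit by checking that those proofs use only the three-level ratio asymptotics at $U'$, $V_U$, $V^U$, which are exactly the corollary's hypotheses. Your key observation---that every triangle incident to $u\in V_U$ lies in the closure of $f_U$ (and every $f_W$, $W\in\mathcal{C}(U')$, likewise), so no scale comparisons outside these three levels ever arise---is precisely the implicit content of the paper's one-line proof.
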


We will show that $(T_D,\Theta_D)$ and $K_Z$ satisfy a series of the following lemmas in a similar way to the proof of the results for $(T,\Theta)$ and $K$ mentioned in Section~2.
\begin{lemma}\label{likeDGB}
It holds for any $r\in \R^V_{>0}$ that
$
\sum_{v\in V_Z} K_{Z,v}(r)=0.
$
\end{lemma}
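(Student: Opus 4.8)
The statement $\sum_{v\in V_Z} K_{Z,v}(r)=0$ is the analogue, for the curvature-like function $K_Z$ on the cell-division $(T_D,\Theta_D)$ of the disk $D$, of the Descartes--Gauss--Bonnet relation recorded in Proposition~\ref{DGB}. The natural strategy is to reduce the claim to the boundary version of that proposition. Recall that $(T_D,\Theta_D)$ is a weighted triangulation of a closed ball $D$ in $\mathbb{E}^2$, with interior vertices $V_Z$ and boundary vertices $V^Z$. For such a configuration, Proposition~\ref{DGB} gives $\sum_v K_{D,v}(r)=2\pi$, where the boundary vertices contribute $\pi-a_v(r)$ and interior ones contribute $2\pi-a_v(r)$. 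The idea is that summing $K_{Z,v}$ over the interior vertices $V_Z$ isolates exactly the interior part of this identity, and the boundary contribution $2\pi$ is cancelled once one accounts correctly for how the outer angles $\theta^{f,1}_v=\pi-\Theta_Z(e^f_v)$ are bookkept.

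The first step is to realize $K_{Z,v}(r)$ for $v\in V_Z$ as a genuine cone-angle curvature in the Euclidean metric on $D$ determined by $r$. By the very definitions of $\theta^{f,3}_v$, $\theta^{f,2}_v$ and $\theta^{f,1}_v$, the length function $L(\,\cdot\,;r)$ and these angles are precisely the Euclidean data of the triangulation $(T_D,\Theta_D)$; the only subtlety is that faces $f\in F_Z^1$ and $f\in F_Z^2$ touch the boundary $\partial D$, and for these the prescribed angle at $v$ must agree with the actual Euclidean angle of the corresponding triangle in $D$. I would verify this identification face by face, so that $K_{Z,v}(r)=2\pi-a_v(r)=K_{D,v}(r)$ for every interior vertex $v\in V_Z$. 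Proposition~\ref{diagonal} guarantees that the quadrilateral faces $f_U$ with $U\in\mathcal{C}(Z')\cap\mathcal{Z}_4$ may be triangulated by either diagonal without affecting $K_Z$, so this identification is well defined.

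The second step is to compute the total boundary curvature and subtract it. Writing $\sum_{v\in V_D}K_{D,v}(r)=2\pi$ and splitting the sum over $V_D=V_Z\cup V^Z$, I obtain
\[
\sum_{v\in V_Z} K_{Z,v}(r)
=\sum_{v\in V_Z} K_{D,v}(r)
=2\pi-\sum_{v\in V^Z}K_{D,v}(r)
=2\pi-\sum_{v\in V^Z}\bigl(\pi-a_v(r)\bigr).
\]
The remaining task is to show that $\sum_{v\in V^Z}(\pi-a_v(r))=2\pi$, i.e.\ that the total turning along $\partial D$ equals $2\pi$. This is where the defining property of $Z\in\mathcal{Z}$ enters: the boundary edges $\{e^f_{v(f)}\}_{f\in\mathrm{Lk}(Z)}$ form the boundary of $f_Z$, and Proposition~\ref{CW} gives $\sum_{f\in\mathrm{Lk}(Z)}\Theta(e^f_{v(f)})=(|\mathrm{Lk}(Z)|-2)\pi$, which is exactly the angle-sum condition making $f_Z$ degenerate. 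Combined with the relation \eqref{equal} that redistributes the weights at the boundary vertices, this forces the sum of exterior angles along $\partial D$ to be $2\pi$.

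The main obstacle I anticipate is the second step's bookkeeping: relating the abstractly-defined boundary angle deficit $\sum_{v\in V^Z}(\pi-a_v(r))$ to the weight-sum identity of Proposition~\ref{CW}. One must check that the angles $\theta^{f,1}_v=\pi-\Theta_Z(e^f_v)$ assigned along the boundary, together with the angles contributed by faces in $F_Z^2$ meeting $\partial D$, sum correctly so that the total exterior turning is governed exactly by $\sum_{f\in\mathrm{Lk}(Z)}\Theta(e^f_{v(f)})=(|\mathrm{Lk}(Z)|-2)\pi$. Once this combinatorial accounting is carried out, the three displayed equations chain together to give $\sum_{v\in V_Z}K_{Z,v}(r)=0$, as desired.
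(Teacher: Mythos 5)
Your global skeleton --- apply the disk Gauss--Bonnet identity of Proposition~\ref{DGB} to $(T_D,\Theta_D)$, split the sum over $V_D=V_Z\cup V^Z$, and show the boundary turning equals $2\pi$ using $\phi(Z)=0$ and the angle-sum identity $\sum_{f\in\mathrm{Lk}(Z)}\Theta(e^f_{v(f)})=(|\mathrm{Lk}(Z)|-2)\pi$ from Proposition~\ref{CW} --- is exactly the skeleton of the paper's proof. But your Step 1 contains a genuine gap: the identification $K_{Z,v}(r)=K_{D,v}(r)$, ``verified face by face'' for a \emph{fixed} metric $r$, is false, and no choice of radii at the boundary vertices can make it true. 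The point is that $K_Z$ does not see the boundary radii at all: $\theta^{f,1}_v(r)=\pi-\Theta_Z(e^f_v)$ is constant in $r$, and the formula for $\theta^{f,2}_v(r)$ involves only $r_v$ and $r_u$, not $r_w$ for $w\in V^Z$. For any honest Euclidean metric on $(T_D,\Theta_D)$ assigning finite radii to $V^Z$, the genuine angle at $v$ in a face touching the boundary is \emph{strictly smaller} than the corresponding degenerate angle --- this is forced by the monotonicity $\partial\theta_i/\partial r_j>0$ in Proposition~\ref{angle_sym}, and is recorded explicitly as the strict inequality in \eqref{1323}. The degenerate angles are attained only in the limit where the boundary radii become infinitely large relative to the interior ones, so Proposition~\ref{DGB} cannot be applied directly to the degenerate angle system: it is not the cone-angle data of any single Euclidean structure on $D$.

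The paper repairs precisely this defect by a limiting argument: it takes a degenerate curve $\{r(t)\}_{t\geq 0}$ of order $r$, applies Proposition~\ref{DGB} to the genuine metric $r(t)$ on $(T_D,\Theta_D)$ for each $t$, and identifies $\lim_{t\to\infty}K_{D,v}(r(t))=K_{Z,v}(r)$ for $v\in V_Z$ via Corollary~\ref{outin0} (whose content is the asymptotics of Proposition~\ref{coin}). Your Step 2 then goes through essentially verbatim in the limit: choosing the diagonals so that no face of $T_D$ has three boundary vertices, the angle at a boundary vertex of a face with only one boundary vertex tends to $0$, while the two boundary angles of each face containing an edge $e^f_{v(f)}$, $f\in\mathrm{Lk}(Z)$, sum in the limit to $\Theta(e^f_{v(f)})$; by Proposition~\ref{CW} the total boundary turning tends to $|\mathrm{Lk}(Z)|\pi-(|\mathrm{Lk}(Z)|-2)\pi=2\pi$, giving $\sum_{v\in V_Z}K_{Z,v}(r)=0$. (A small secondary point: the relation \eqref{equal} you invoke concerns the sub-polygons $f_U$, $U\in\mathcal{C}(Z')$, in the interior, not the boundary of $f_Z$; the correct boundary ingredient is the limit of the paired angle sums just described.) So your plan identifies the right ingredients, but the exact pointwise identification it rests on must be replaced by the degeneration-and-limit mechanism.
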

\begin{proof}
For a degenerate curve $\{r(t)\}_{t\geq0}$ of order $r$, Proposition \ref{DGB} and Corollary \ref{outin0} yield 
\begin{equation}\label{like}
\sum_{v\in V_Z}K_{Z,v}(r)+\lim_{t\to \infty}\sum_{w\in V^Z} K_{D,v}(r(t))
=\lim_{t\to \infty}\sum_{v\in V_D} K_{D,v}(r(t))
=2\pi,
\end{equation}
where we  use the same notations $V_D, V^Z$ for their preimages under a continuous map $c$ from~$D$ to the closure of $f_Z$.
In order to analyze the behavior of $\sum_{w\in V^Z} K_{D,w}(r(t))$, define for $j=1,2,3$
\[
F_D^j:=\{f \in F_D \ |\  \text{$j$ vertices of $f$ are in $V^Z$}  \}.
\]
Without loss of generality, we can choose diagonal edges $\{e_{U}\}_{U\in \mathcal{C}(Z')\cap \mathcal{Z}_4}$ such that $F_D^3=\emptyset$.
Then $F^j_D=F^{3-j}_Z$ for $j=1,2$.
Since $\lim_{t\to \infty}  \theta^f_w(r(t))=0$ holds for $w\in V_Z, w\in f\in F_D^1$ as well as \eqref{in}, 
we have
\begin{align*}
&\lim_{t\to \infty} \sum_{w\in V^Z}K_{D,w}(r(t)) 
=|\mathrm{Lk}(Z)|\pi-\lim_{t\to \infty}a(r(t)),\\
&a(r):=\left(\text{the sum of each Euclidean angles at $V^Z$ of all triangles in $F_D^2$ with respect to $r$}\right).
\end{align*}
The correspondence from $f \in \mathrm{Lk}(Z)$ to $f'\in F_D^2$ containing $e^f_{v(f)}$ is bijective,  
and the sum of angles at the endpoints of $e^f_{v(f)}$ in $f'$ with respect to $r(t)$ converges to $\Theta_Z(e^f_{v(f)})=\Theta(e^f_{v(f)})$ at infinity by a similar argument to \eqref{out}.
Thus we see that
\[
\lim_{t\to \infty}a(r(t))=\sum_{f\in\mathrm{Lk}(Z)} \Theta(e^f_{v(f)})=(|\mathrm{Lk}(Z)|-2)\pi
\]
by Proposition \ref{CW}.
Substituting this into \eqref{like} completes the proof of the lemma.
\end{proof}
For $U \subset V_Z$, we define  
\[
\mathrm{Lk}_Z(U):=\cup_{v\in U}\{f\in F_D\ |\ \text{$v\in f$ and there exists $e\in E_D$ such that $e\subset f$ and $e\cap U=\emptyset$}\}
\]
and $\tau_{U}^Z$ by the CW-subcomplex of $T_D$, consisting of all cells whose vertices are contained in~$U$.
We consider the function defined for $U\subset V_Z$ by
\[
\phi_Z(U):=-\sum_{f\in \mathrm{Lk}_Z(U)} (\pi-\Theta_Z(e^f_{v(f)})) +2\pi \chi(\tau_{U}^Z).
\]
\begin{lemma}\label{less}
It holds for any nonempty proper subset $U$ of $V_Z$ that $\phi_Z(U)<0$.
\end{lemma}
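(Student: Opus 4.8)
The plan is to transfer the hypothesis $\phi\le 0$ on $(T,\Theta)$ to the strict inequality on $(T_D,\Theta_D)$ by lifting subsets of $V_Z$ back to $V$. Given a nonempty proper subset $U$ of $V_Z$, I would set $\tilde U:=U\cup Z'$ and first note that $\tilde U$ is a nonempty proper subset of $V$: one has $\tilde U\subseteq Z$, while $\tilde U\cap V_Z=U\subsetneq V_Z$ forces $\tilde U\neq Z$, so $\tilde U\subsetneq Z\subsetneq V$. The whole lemma then reduces to the identity
\[
\phi_Z(U)=\phi(\tilde U),
\]
together with the observation that the right-hand side is negative.

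The identity is where the work lies. Recall that $(T_D,\Theta_D)$ is obtained from $(T,\Theta)$ restricted to $\overline{f_Z}$ by collapsing each inner region $\overline{f_{U'}}$, $U'\in\mathcal C(Z')$, to the single (possibly bisected) cell $f_{U'}$, and that passing from $U$ to $\tilde U=U\cup Z'$ reinstates exactly the vertices that were collapsed. Outside the regions $\overline{f_{U'}}$ the cells, edges and weights of $T_D$ and $T$ coincide, so $\phi_Z(U)$ and $\phi(\tilde U)$ can only differ through the contributions of the cells $f_{U'}$ and of the genuine triangles they replace. Because each $U'$ lies in $\mathcal Z$, three facts are available: the boundary edges $e^f_{v(f)}$ of $f_{U'}$ are genuine edges carrying the weights $\Theta$, any diagonal edge of $f_{U'}$ carries weight $0$, and Proposition~\ref{CW} gives $\sum_{f\in\mathrm{Lk}(U')}\Theta(e^f_{v(f)})=(|\mathrm{Lk}(U')|-2)\pi$. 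I would use these to show that the change in the Euler-characteristic term $2\pi\chi$ produced by collapsing each $\overline{f_{U'}}$ is exactly cancelled by the change in the link term; note that neither term is individually invariant (already the choice of diagonal alters $\chi(\tau^Z_U)$ and $\mathrm{Lk}_Z(U)$ separately), so it is only the combination $\phi_Z$ that is well defined and the cancellation is essential. This combinatorial cancellation is the shadow of the infinitesimal angle computations already carried out in Propositions~\ref{coin} and~\ref{coin0} and in the proof of Lemma~\ref{likeDGB} --- indeed $\pi-\Theta_Z(e^f_{v(f)})$ is precisely the limiting angle $\theta^{f,1}_{v(f)}$ --- and it can alternatively be obtained analytically by realizing both $\phi_Z(U)$ and $\phi(\tilde U)$ as limiting curvature sums (Proposition~\ref{topcurv} and its disc analogue) along a two-scale degeneration whose eventual zero-set is $\tilde U$, using $\lim_{t\to\infty}K_v(r(t))=K_{Z,v}$ from Corollary~\ref{outin0} and the fact that every component of $\tau_{Z'}$ lies in $\mathcal Z$ and so contributes $\phi=0$. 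Tracking this compensation across cells that meet several rate levels is the step I expect to be the main obstacle.

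Granting the identity, the conclusion is immediate. By hypothesis $\phi(\tilde U)\le 0$. Suppose $\phi(\tilde U)=0$. Writing $\tilde U$ as the disjoint union of the vertex sets $W$ of the connected components of $\tau_{\tilde U}$ and using the additivity of $\phi$, each summand satisfies $\phi(W)\le 0$, so all of them vanish. Every such $W$ is connected with $\phi(W)=0$, hence by Proposition~\ref{CW} has $|\mathrm{Lk}(W)|\in\{3,4\}$, that is $W\in\mathcal Z$; moreover $W\subseteq\tilde U\subsetneq Z$ gives $W\subsetneq Z$, so by the definition of $Z'$ every vertex of $W$ lies in $Z'$. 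Hence $\tilde U=\bigcup W\subseteq Z'$, contradicting $\emptyset\neq U\subseteq\tilde U$ and $U\cap Z'=\emptyset$ (as $U\subseteq V_Z=Z\setminus Z'$). Therefore $\phi_Z(U)=\phi(\tilde U)<0$.
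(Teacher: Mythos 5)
Your strategy is genuinely different from the paper's, and its load-bearing step is missing. You reduce the whole lemma to the universal transfer identity $\phi_Z(U)=\phi(U\cup Z')$ for \emph{every} nonempty proper $U\subset V_Z$, but you never prove it: you sketch a hoped-for cancellation between the $2\pi\chi$ term and the link term and yourself flag ``tracking this compensation across cells that meet several rate levels'' as the main obstacle. That cancellation is exactly where the content of the lemma sits. Verifying it requires a case analysis on $|f_{U'}\cap U|\in\{0,1,2,3\}$ for $U'\in\mathcal{C}(Z')\cap\mathcal{Z}_3$ (and, for $\mathcal{Z}_4$, up to four corners, both diagonal choices, and adjacent versus opposite corner patterns), and in each case one must control the triangulation of $T$ strictly inside $f_{U'}$: in the one-corner case one must identify the unique interior triangle carrying the boundary edge opposite the corner and exclude degenerate fillings using $f_{U'}\notin F$ and the absence of null-homotopic loops of at most two edges; in the zero-corner case the cancellation uses the angle sum $\sum_{f\in\mathrm{Lk}(U')}\Theta(e^f_{v(f)})=(|\mathrm{Lk}(U')|-2)\pi$ from Proposition~\ref{CW} together with an Euler-characteristic count for the subcomplex spanned by $U'$; coinciding vertices of triangles and distinct cells $f_{U'_1},f_{U'_2}$ sharing boundary edges must also be handled. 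The alternative analytic route you mention (realizing both sides as limits of curvature sums along a multi-scale degeneration, via Lemma~\ref{compati}, Remark~\ref{310} and Corollary~\ref{outin}) is not circular, since those ingredients do not depend on this lemma, but it needs a diagonal argument producing a single sequence along which all the rate-separation hypotheses hold simultaneously, which you also do not supply. Your final paragraph --- additivity of $\phi$ over components, Proposition~\ref{CW} to get each $W\in\mathcal{Z}$, hence $W\subseteq Z'$, contradicting $U\cap Z'=\emptyset$ --- is correct, and in fact mirrors the closing step of the paper's own proof.

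For contrast: the paper never proves, and never needs, your identity for all $U$. It takes $U\subseteq V_Z$ \emph{minimal} with $\phi_Z(U)\geq 0$ and extracts structure from that inequality alone: connectivity of $\tau^Z_U$, $\chi(\tau^Z_U)=1$, $\mathrm{Lk}_Z(U)\neq\emptyset$ with $|\mathrm{Lk}_Z(U)|\in\{3,4\}$, and --- crucially --- that all loop edges $\{e^f_{v(f)}\}_{f\in\mathrm{Lk}_Z(U)}$ lie in $E$, so no weight-$0$ diagonal occurs in the link. For that one $U$ it then chooses $W\subseteq Z'$ with $\mathcal{C}(W)\subseteq\mathcal{C}(Z')$, $\tau_{U\cup W}$ contractible and $\mathrm{Lk}(U\cup W)=\mathrm{Lk}_Z(U)$, so the equality $\phi(U\cup W)=\phi_Z(U)\geq 0$ is immediate for this particular $U$, forcing $U\cup W\in\mathcal{Z}$ and hence $U=V_Z$. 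This extremal choice is what lets the paper bypass the collapsing calculus your identity would demand. Your identity does appear to be true --- spot checks of the configurations above confirm it, consistent with Proposition~\ref{diagonal} --- so if you carry out the per-cell case analysis in full you would obtain a valid (and slightly stronger) proof; as written, however, the central step is asserted rather than proved, and the proposal has a genuine gap.
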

\begin{proof}
Let $U$ be the nonempty subset of $V_Z$  of the smallest cardinality such that $\phi_Z(U)\geq 0$.
The additivity of the right-hand side of $\phi_Z(U)$ with respect to connected components of $\tau_U^Z$ 
leads to the connectivity of $\tau_U^Z$ in $T_D$.
By the theory of planar graphs, we have $\chi(\tau_U^Z) \leq 1$ with equality if and only if $\tau_{U}^Z$ is contractible. 
If $\mathrm{Lk}_{Z}(U)= \emptyset$,
then any two vertices in $V_D\setminus U$ are not joined by $e\in E_D$, which contradicts  $V^Z \subset V_D\setminus U$.
Hence $\mathrm{Lk}_Z(U)\neq \emptyset$.
It follows from the fact $\Theta_Z(e) \leq \pi/2$ that  
\[
0\leq \phi_{Z}(U)
=-\sum_{f \in \mathrm{Lk}_{Z}(U)}(\pi-\Theta_Z(e^f))+2\pi\chi(\tau_{U}^Z) \leq -|\mathrm{Lk}_{Z}(U)|\cdot\frac{\pi}2+2\pi\chi(\tau_{U}^Z),
\]
implying $\chi(\tau_{U}^Z)=1,
|\mathrm{Lk}_Z(U)|\leq 4$ and $\sum_{f\in \mathrm{Lk}_{Z}(U)} \Theta_Z(e^f_{v(f)}) \geq (|\mathrm{Lk}_Z(U)|-2)\pi$.
In this case $\{e^f_{v(f)}\}_{f\in \mathrm{Lk}_{Z}(U)}$ form a null-homotopic loop 
hence $|\mathrm{Lk}_Z(U)|\in\{3,4\}$ by the assumption of triangulations.
Moreover, we see that $\{e^f_{v(f)}\}_{f\in \mathrm{Lk}_{Z}(U)} \subset E$, 
and there exists $W\subset Z'$  such that $\mathcal{C}(W) \subset \mathcal{C}(Z')$,
$\tau_{U\cup W}$ is contractible and $\mathrm{Lk}(U\cup W)=\mathrm{Lk}_Z(U)$.
Therefore we have $\phi(U\cup W)\geq 0$, namely $U \cup W  \in\mathcal{Z}$, implying $U=V_Z$ due to $U\subset V_Z=Z\setminus Z'$.
\end{proof}
\begin{lemma}\label{compati}
If a sequence $\{r(n)\}_{n\in \N}$ in\ $\R^{V_Z}_{>0}$ converges on $[0,\infty]^{V_Z}$ as $n\to \infty$,
then we have
\[
\lim_{n\to \infty}\sum_{v\in U} K_{Z,v}(r(n))=\phi_Z(U),
\qquad
U:=\{v \in V_Z \ |\ \lim_{n \to \infty} r_v(n)=0\}.
\]
Furthermore, it holds for any nonempty proper subset $U$ of $V_Z$ and $r \in \R^{V_Z}_{>0}$ that
\[
\sum_{v\in U} K_{Z,v}(r)>\phi_Z(U).
\]
\end{lemma}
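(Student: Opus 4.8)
The plan is to prove this as the exact disk-triangulation counterpart of Proposition~\ref{topcurv}, running through the same two steps: a combinatorial angle count for the limit identity, and a scaling--monotonicity argument for the strict inequality. Here $(T_D,\Theta_D)$ plays the role of $(T,\Theta)$, the interior vertices $V_Z$ play the role of $V$, the boundary vertices $V^Z$ are the ideal (infinite-radius) vertices, and $K_Z$ plays the role of $K$. By construction the angles $\theta^{f,1}_v,\theta^{f,2}_v,\theta^{f,3}_v$ are precisely the Euclidean angles of the (possibly ideal) center-triangles, so the two ingredients I need are available from Proposition~\ref{angle_sym} and its ideal limits.

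For the limit identity I would fix $U:=\{v\in V_Z\ |\ \lim_n r_v(n)=0\}$ and write
\[
\sum_{v\in U}K_{Z,v}(r(n))=2\pi|U|-\sum_{f}\sum_{v\in U\cap f}\theta^{f,j}_v(r(n)),\qquad j=|f\cap V_Z|,
\]
the outer sum ranging over the triangles $f$ of $T_D$. Classifying each triangle by $|U\cap f|\in\{1,2,3\}$, the very definitions of $\theta^{f,j}_v$ supply the limiting angle sums: a triangle with all three vertices in $U$ contributes $\pi$ by scale invariance; a triangle with exactly two vertices in $U$ contributes $\pi$, since the remaining relatively large (or ideal) vertex has angle tending to $0$; and a triangle with a single vertex $v\in U$ contributes $\pi-\Theta_Z(e^f_{v(f)})$, its opposite edge lying in $\mathrm{Lk}_Z(U)$. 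Substituting the planar identity $\chi(\tau^Z_U)=|U|-(\text{edges of }\tau^Z_U)+(\text{triangles of }\tau^Z_U)$, expressed through the relevant face counts exactly as in Section~2, collapses the right-hand side to $-\sum_{f\in\mathrm{Lk}_Z(U)}(\pi-\Theta_Z(e^f_{v(f)}))+2\pi\chi(\tau^Z_U)=\phi_Z(U)$.

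For the strict inequality I would fix $r\in\R^{V_Z}_{>0}$ and a nonempty proper $U\subsetneq V_Z$; by additivity of both sides over the connected components of $\tau^Z_U$ it suffices to treat connected $U$. Scaling only the $U$-coordinates by $r^t_v:=t\,r_v$ for $v\in U$ and $r^t_v:=r_v$ otherwise, I compute
\[
\frac{d}{dt}\sum_{v\in U}K_{Z,v}(r^t)=-\sum_{f}\sum_{v,w\in U\cap f}\frac{\partial\theta^{f,j}_v}{\partial r_w}(r^t)\,r_w,\qquad j=|f\cap V_Z|,
\]
and evaluate the inner double sum triangle by triangle. Within each triangle the interior angles obey a sum relation (equal to $\pi$ for a triangle with at most one ideal vertex, while the single interior angle $\theta^{f,1}_v$ of a two-ideal triangle is constant), so $\sum_{\text{vertices}}\partial\theta/\partial r_w=0$; together with the sign relations $\partial\theta_i/\partial r_i<0<\partial\theta_i/\partial r_j$ of Proposition~\ref{angle_sym} this makes every per-triangle term $\le 0$, and strictly negative exactly when $f$ has a vertex in $U$ and another in $V_Z\setminus U$. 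For connected proper $U$ such a straddling triangle exists by the connectivity of the disk $D$ (as used in the proof of Lemma~\ref{less}), so $t\mapsto\sum_{v\in U}K_{Z,v}(r^t)$ is strictly increasing; letting $t\to0^+$ and invoking the limit identity gives $\sum_{v\in U}K_{Z,v}(r)>\phi_Z(U)$.

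The step I expect to be the main obstacle is the verification that the ideal angle functions inherit the hypotheses of Proposition~\ref{angle_sym}: $\theta^{f,1}_v=\pi-\Theta_Z(e^f_v)$ is constant, and hence enters the limit through $\mathrm{Lk}_Z(U)$ but contributes nothing to the derivative, whereas for $\theta^{f,2}_v$ one must confirm both $\theta^{f,2}_v+\theta^{f,2}_u=\pi$ and $\partial\theta^{f,2}_v/\partial r_v<0<\partial\theta^{f,2}_v/\partial r_u$ directly from its explicit arccosine formula, realized as the limit $r_w\to\infty$ of the honest angles. A secondary, purely bookkeeping, point is aligning the triangle classification in the limit step with the conventions fixed for $\check{F}^{j}_{Z}$ and $\hat{F}^{j}_{Z}$, so that each vanishing-angle and each $\pi-\Theta$ term is assigned to the correct face count entering $\chi(\tau^Z_U)$.
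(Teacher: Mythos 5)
Your proof of the limit identity is essentially the paper's: the paper classifies the triangles of $T_D$ by the pair $(|f\cap U|,\,|f\cap V_Z|)$, observes that triangles in $F_{U}^3\cap F_Z^3$ and in $F_{U}^2\cap F_Z^2$ contribute exactly $\pi$ (the latter because $\theta^{f,2}_{v}+\theta^{f,2}_{u}=\pi$ holds identically, not merely in the limit), that triangles in $F_{U}^2\cap F_Z^3$ contribute $\pi$ in the limit \eqref{23}, that each $f\in F_U^1=\mathrm{Lk}_Z(U)$ contributes $\pi-\Theta_Z(e^f_{v(f)})$ in the limit \eqref{1323}, and then collapses the count using $\phi_Z(U)=-\sum_{f\in\mathrm{Lk}_Z(U)}(\pi-\Theta_Z(e^f_{v(f)}))+2\pi|U|-\pi(|F_U^2|+|F_U^3|)$ — exactly your bookkeeping. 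For the strict inequality, however, you take a genuinely different route. The paper never differentiates: it notes that \eqref{23} and \eqref{1323} are already \emph{pointwise} strict at every finite $r$ (the finite angle sums sit strictly below their degenerate limit values), so for a nonempty proper $U$ the connectivity of $V_Z$ ensures one of $F_U^1\cap F_Z^2$, $F_U^1\cap F_Z^3$, $F_U^2\cap F_Z^3$ is nonempty and the inequality drops out at once — no scaling flow, no $t\to0^+$ limit, no reduction to connected $U$. Your variational argument is the infinitesimal version of the same comparison; it buys a monotonicity statement the lemma does not need, at the cost of the chain-rule bookkeeping and the justification of the boundary limit.

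There is one genuine gap in your strictness step. You propose to verify $\partial\theta^{f,2}_v/\partial r_v<0<\partial\theta^{f,2}_v/\partial r_u$ from the arccosine formula, but these signs are \emph{not} always strict: as recorded in \eqref{222}, equality holds precisely when $\Theta_Z(e^f_v)=\Theta_Z(e^f_u)=\pi/2$, in which case $\theta^{f,2}_v\equiv\pi/2$ is constant in $r$. Consequently your claim that the per-triangle derivative term is ``strictly negative exactly when $f$ has a vertex in $U$ and another in $V_Z\setminus U$'' fails for straddling triangles in $F_U^1\cap F_Z^2$ with both of those weights equal to $\pi/2$; if every straddling triangle were of this degenerate type, your monotone function would be constant and the argument would yield only $\geq$. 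Excluding that configuration requires a combinatorial argument with $\phi_Z$ and Lemma \ref{less}, of the kind the paper carries out in the proof of Lemma \ref{injective} when handling the degeneracy \eqref{nabla}. (To be fair, the paper's own inequality \eqref{1323} for $j=2$ also degenerates to an equality in the same doubly perpendicular case, so its proof tacitly relies on the same exclusion; but your write-up asserts the false strict signs as a checkable step, so you should repair it, e.g.\ by noting that such degenerate triangles contribute zero and exhibiting a straddling triangle of nondegenerate type.) A related minor point: the existence of a straddling triangle is not a consequence of the connectedness of the disk $D$ alone — edges of $E_Z$ joining two vertices of $V^Z$ could a priori separate $U$ from $V_Z\setminus U$ — which is why the paper invokes the connectivity of $V_Z$ rather than of $D$.
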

\begin{proof}
Let $U$ be a subset of $V_Z$, possibly $U=\emptyset, V_Z$.
By an argument similar to $\tau_{U}$, we find
\[
\phi_Z(U)=-\sum_{f\in\mathrm{Lk}_{Z}(U)}(\pi -\Theta_Z(e^f_{v(f)}))+2\pi|U|-\pi(|F_{U}^2|+|F_{U}^3|),
\quad
F_{U}^j:=\{f\in F_D \ |\  |f \cap U | =j\},
\]
and $F_U^1=\mathrm{Lk}_Z(U)$.
It turns out that 
\begin{align*} 
2\pi|U|-\sum_{v\in U}K_{Z,v}(r) 
&=
\sum_{v\in U}\sum_{j=1}^3
\sum_{f\in  F_{U}^1\cap F_Z^j(v)} \theta^{f,j}_v(r)\\
&
+\sum_{v_1,v_2\in U, v_1\neq v_2}
\sum_{j=2}^3 \sum_{v_1,v_2\in f\in  F_{U}^2\cap F_Z^j} (\theta_{v_1}^{f,j}(r)+\theta_{v_2}^{f,j}(r))\\
&
+\sum_{v_1,v_2,v_3\in U\text{:distinct}}
\sum_{v_1,v_2,v_3 \in f\in  F_{U}^3\cap F_Z^3}\left( \theta_{v_1}^{f,3}(r)+\theta_{v_2}^{f,3}(r)+\theta_{v_3}^{f,3}(r)\right).
\end{align*}
Given $f\in  F_{U}^3\cap F^3_Z$ with vertices $v_1, v_2, v_3$,
and $\tilde{f}\in  F_{U}^2\cap F_Z^2$ with vertices $\tilde{v}_1, \tilde{v}_2$ in $U$, we find
\[
\theta^{f,3}_{v_1}(r(n))+\theta^{f,3}_{v_2}(r(n))+\theta^{f,3}_{v_3}(r(n))
=
\theta^{\tilde{f},2}_{\tilde{v}_1}(r(n))+\theta^{\tilde{f},2}_{\tilde{v}_2}(r(n))=\pi.
\]
Arguing similarly for Proposition \ref{coin},
we have for $f\in  F_{U}^2\cap  F_Z^3$ with vertices $v_1,v_2$ in $U$ that
\begin{equation}\label{23}
\lim_{n\to \infty}(\theta^{f,3}_{v_1}(r(n))+\theta^{f,3}_{v_2}(r(n)))=\pi
>\theta^{f,3}_{v_1}(r(n))+\theta^{f,3}_{v_2}(r(n)),
\end{equation}
where the inequality follows from Proposition \ref{angle_sym} for the case of $\chi(S)=0$.
Similarly, for $f\in  F_{U}^1\cap F_Z^j$ with $j=2,3$ having  $v\in U$ as one of the vertices, 
we notice
\begin{equation}\label{1323}
\lim_{n\to \infty}{\theta}^{f,j}_{v}(r(n))=\pi-\Theta_Z(e^f_v)>{\theta}^{f,j}_{v}(r(n)).
\end{equation}
This leads to
\[
2\pi|U|-\lim_{n\to\infty}\sum_{v\in U}K_{Z,v}(r(n))
=\sum_{f\in \mathrm{Lk}_Z(U)} ( \pi-\Theta_Z(e^f_{v(f)}))+\pi|F_{U}^2|+\pi|F_{U}^3|
=2\pi|U|-\phi_Z(U),
\]
proving the first claim.
Moreover, in the case of $|V_Z|\geq 2$, if $U\neq \emptyset, V_Z$, then the connectivity of $V_Z$ ensures that 
at least one of 
\[
F_{U}^1\cap  F_Z^2, \quad
F_{U}^1\cap  F_Z^3, \quad
F_{U}^2\cap  F_Z^3
\]
is nonempty, which implies the second claim by \eqref{23}, \eqref{1323}.
\end{proof}
\begin{remark}\label{310}
Arguing similarly to Lemma \ref{compati}, 
we find that the first claim in Proposition~\ref{topcurv} holds for a sequence in $\R^V_{>0}$ converging on $[0,\infty]^{V}$.
We moreover observe from the proof of Lemma \ref{compati} that, 
for a sequence $\{r(n)\}_{n\in \N}$ in\ $\R^{V}_{>0}$ converging on $[0,\infty]^{V}$ as $n\to \infty$,
we have
\[
\lim_{n\to \infty}\sum_{v\in U} K_{v}(r(n))=\phi(U)
\]
if and only if it holds for any $u\in U$ and $v\in  V\setminus U$ adjacent to $u$ that 
\[
\lim_{n\to \infty} \frac{r_u(n)}{r_v(n)}=0,
\]
in addition  $\lim_{n \to \infty} r_u(n)=0$ if $\chi(S)<0$, 
where there is no restriction on a behavior of $\{r(n)\}_{n\in \N}$ at vertices 
which do not belong to $U$ nor are not adjacent to $U$.
\end{remark}
\begin{lemma}
There exists a smooth function $\psi_Z$ on $\R^{V_Z}$ such that $\nabla \psi_Z=K_Z \circ R_Z $,
where a map $R_Z: \R^{V_Z} \to \R^{V_Z}_{>0}$ sends $(x_v)_{v\in V_Z}$ to $(e^{x_v})_{v\in V_Z}$.
\end{lemma}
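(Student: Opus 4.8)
The plan is to realise $K_Z\circ R_Z$ as the gradient of a function by showing that it is a closed (equivalently, conservative) vector field on the contractible domain $\R^{V_Z}$, exactly as the potential $\psi$ of Proposition~\ref{convex} is obtained from $K\circ R_S$. Writing $\omega:=\sum_{v\in V_Z}(K_{Z,v}\circ R_Z)\,dx_v$, once $\omega$ is verified to be closed, the Poincar\'e lemma on the contractible set $\R^{V_Z}$ produces a smooth $\psi_Z$ with $\partial\psi_Z/\partial x_v=K_{Z,v}\circ R_Z$ for each $v$, that is, $\nabla\psi_Z=K_Z\circ R_Z$. Smoothness of $\psi_Z$ follows from smoothness of $K_Z\circ R_Z$, which in turn follows from smoothness of the exponential map $R_Z$ together with the fact that each angle $\theta^{f,j}_v$ is a smooth function of the radii on $\R^{V_Z}_{>0}$ (the arccosines have arguments in the open interval $(-1,1)$, as guaranteed by the realizability of the corresponding, possibly degenerate, circle configurations).

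Closedness of $\omega$ amounts to the symmetry $\partial_{x_u}(K_{Z,v}\circ R_Z)=\partial_{x_v}(K_{Z,u}\circ R_Z)$ for all $u,v\in V_Z$. Since $R_Z$ sends $x_v$ to $r_v=e^{x_v}$, one has $\partial/\partial x_v=r_v\,\partial/\partial r_v$, and using the decomposition
\[
K_{Z,v}(r)=2\pi-\sum_{j=1}^3\sum_{f\in\check F_Z^j(v)\cup\bar F_Z^j(v)}\theta^{f,j}_v(r)
\]
established after Proposition~\ref{diagonal}, the only terms surviving in $\partial_{x_u}(K_{Z,v}\circ R_Z)$ come from cells $f$ having both $u$ and $v$ among their vertices in $V_Z$. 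Thus the required symmetry reduces to the per-cell identity
\[
r_u\,\frac{\partial\theta^{f,j}_v}{\partial r_u}=r_v\,\frac{\partial\theta^{f,j}_u}{\partial r_v}
\]
for every such $f$, where $j=|f\cap V_Z|$ is common to both angles. Since each $f_U\in\bar F_Z^j(v)$ reduces to the triangle $\hat f_U$, it suffices to treat the three triangle types.

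I verify this identity according to the type $j$. For $j=3$ the cell is a genuine Euclidean triangle formed by three circles of radii in $\R^{V_Z}_{>0}$, and the identity is precisely the second relation of Proposition~\ref{angle_sym} specialized to the Euclidean case $\sigma_S(r)=r$. For $j=1$ the angle $\theta^{f,1}_v=\pi-\Theta_Z(e^f_v)$ is constant and the cell contributes its single $V_Z$-vertex only to $K_{Z,v}$, so both sides vanish. The genuinely new case is $j=2$, where $f$ has vertices $v,u\in V_Z$ and $w\in V^Z$: here the defining formula shows at once that $\theta^{f,2}_v$ depends only on $(r_v,r_u)$ and is homogeneous of degree $0$, so Euler's relation gives $r_v\,\partial_{r_v}\theta^{f,2}_v+r_u\,\partial_{r_u}\theta^{f,2}_v=0$; moreover the same formula yields $\theta^{f,2}_u=\pi-\theta^{f,2}_v$, whence $\partial_{r_v}\theta^{f,2}_u=-\partial_{r_v}\theta^{f,2}_v$. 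Combining these gives $r_v\,\partial_{r_v}\theta^{f,2}_u=-r_v\,\partial_{r_v}\theta^{f,2}_v=r_u\,\partial_{r_u}\theta^{f,2}_v$, which is the desired identity.

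The main obstacle is exactly this degenerate type $j=2$, for which Proposition~\ref{angle_sym} does not apply directly because one center has been pushed to infinity; the homogeneity-plus-complementarity observation above circumvents it cleanly. Finally, the construction is independent of the auxiliary choices of diagonal edges by Proposition~\ref{diagonal}, so $\psi_Z$ is well defined.
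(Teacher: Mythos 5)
Your proposal is correct and follows essentially the same route as the paper: verify the symmetry $\partial_{x_u}(K_{Z,v}\circ R_Z)=\partial_{x_v}(K_{Z,u}\circ R_Z)$ cell by cell (trivial vanishing for non-incident cells and $j=1$, Proposition~\ref{angle_sym} in the Euclidean case for $j=3$, and for $j=2$ exactly the relations the paper records in its analogue of \eqref{222}), then invoke the Poincar\'e lemma on $\R^{V_Z}$. Your derivation of the $j=2$ identities via degree-zero homogeneity plus the complementarity $\theta^{f,2}_u=\pi-\theta^{f,2}_v$ is a slightly cleaner packaging of the same computation, and your appeal to Proposition~\ref{diagonal} legitimately reduces the quadrilateral cells to a fixed-diagonal triangle decomposition as in the paper's definition of $K_Z$.
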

\begin{proof}
Given $v,u \in V_Z$ and $f\in F^1_Z(v), \tilde{f}\in F^j_Z(v)$ with $j=2,3$, 
if $u\notin \tilde{f}$, then we  trivially have
\begin{equation}\label{111}
\frac{\partial (\theta_v^{f,1} \circ R_Z)}{\partial x_v}
=0,\quad
\frac{\partial (\theta_v^{f,1} \circ R_Z)}{\partial x_u}
=0,\quad
\frac{\partial (\theta_v^{\tilde{f},j} \circ R_Z)}{\partial x_u}
=0.
\end{equation}
For distinct vertices $v,u\in V_Z$ of $f\in  F^3_Z$,
Proposition~\ref{angle_sym} for $\chi(S)=0$ yields 
\begin{equation}\label{333}
\frac{\partial (\theta_v^{f,3} \circ R_Z)}{\partial x_u}
= \frac{\partial \theta_v^{f,3} }{\partial r_u}  R_{Z,u}
= \frac{\partial \theta_u^{f,3}}{\partial r_v} R_{Z,v}
=\frac{\partial (\theta_u^{f,3} \circ R_Z)}{\partial x_v}>0
>\frac{\partial (\theta_v^{f,3} \circ R_Z)}{\partial x_v}.
\end{equation}
Similarly, we find for distinct vertices $v,u\in V_Z$ of $f\in F^2_Z$ that
\begin{equation}\label{222}
\frac{\partial (\theta_v^{f,2} \circ R_Z)}{\partial x_u}
=\frac{\partial (\theta_u^{f,2} \circ R_Z)}{\partial x_v}
=-\frac{\partial (\theta_v^{f,2} \circ R_Z)}{\partial x_v}
\geq 0,
\end{equation}
with equality in the inequality if and only if $\Theta_Z(e^f_v)=\Theta_Z(e^f_u)=\pi/2$.
Thus we conclude 
\[
\frac{\partial (K_{Z,v} \circ R_Z)}{\partial x_u}=
\frac{\partial (K_{Z,u} \circ R_Z)}{\partial x_v}
\]
and the Poincar\'e lemma together with the regularity of $K_{Z}$ gives the desired $\psi_Z$.
\end{proof}
The following fact of linear algebra plays a key role in the proof of the convexity of $\psi_Z$.
\begin{proposition}{\rm (\cite{CL}*{Lemma 3.10})} \label{LA}
If a symmetric matrix $A=[a_{ij}]_{1\leq i,j \leq n}$ satisfies 
\[
a_{ii}>0 \geq a_{ij}, \quad \sum_{k=1}^n a_{ik}=0
\]
for any $1\leq i,j \leq n$ with $i\neq j$,
then $A$ is semi-positive definite and its kernel is one-dimensional with basis $(1,\ldots, 1)$.
\end{proposition}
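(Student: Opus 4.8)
The plan is to recognize $A$ as a weighted graph Laplacian and to extract both conclusions from a single quadratic-form identity. Writing $\langle Ax,x\rangle=\sum_{i,j}a_{ij}x_ix_j$ for $x=(x_1,\dots,x_n)\in\R^n$, I would first prove
\[
\langle Ax,x\rangle=\frac12\sum_{i\neq j}(-a_{ij})(x_i-x_j)^2,
\]
and then read off semi-positive definiteness and the structure of the kernel directly from it.

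To obtain the identity I would expand the right-hand side, use the symmetry $a_{ij}=a_{ji}$ to merge the $x_i^2$ and $x_j^2$ contributions, and then substitute the row-sum hypothesis in the form $a_{ii}=-\sum_{j\neq i}a_{ij}$ to recover the diagonal term $\sum_i a_{ii}x_i^2$. Since the off-diagonal sign condition gives $-a_{ij}\geq 0$ for all $i\neq j$, every summand on the right is nonnegative, so $\langle Ax,x\rangle\geq 0$ for all $x$; thus $A$ is semi-positive definite.

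For the kernel, the vector $(1,\dots,1)$ lies in $\ker A$ immediately from $\sum_k a_{ik}=0$, giving $\dim\ker A\geq 1$. For the reverse bound I would use that, for a symmetric semi-positive definite matrix, $Ax=0$ is equivalent to $\langle Ax,x\rangle=0$. By the identity this vanishing forces $x_i=x_j$ whenever $a_{ij}<0$. Introducing the graph $G_A$ on $\{1,\dots,n\}$ with an edge joining $i$ and $j$ exactly when $a_{ij}<0$, every $x\in\ker A$ must be constant on each connected component of $G_A$; hence $\dim\ker A$ equals the number of connected components of $G_A$.

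The one genuinely delicate point, and the main obstacle, is therefore the connectedness of $G_A$, which is what collapses the kernel onto the single line spanned by $(1,\dots,1)$. Note that the strict positivity $a_{ii}>0$ only guarantees that each vertex has at least one $G_A$-neighbour, since $a_{ii}=\sum_{j\neq i}(-a_{ij})>0$ forces some $a_{ij}<0$, but it does not by itself force connectedness; a block-diagonal example shows the one-dimensional conclusion genuinely fails when $G_A$ is disconnected, so the statement must be read with $A$ irreducible, equivalently $G_A$ connected. In the intended application $A$ arises as the Hessian of $\psi_Z$ on $\R^{V_Z}$, whose off-diagonal sign pattern from \eqref{333} and \eqref{222} ties $G_A$ to the adjacency structure of $V_Z$, so that connectedness of $G_A$ is inherited from the connectivity of $V_Z$; invoking this supplies the needed hypothesis and completes the argument. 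The remaining verifications, namely the expansion producing the identity and the equivalence $Ax=0\Leftrightarrow\langle Ax,x\rangle=0$, are routine.
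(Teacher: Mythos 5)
Your quadratic-form argument is correct, and there is in fact nothing in this paper to compare it against: Proposition \ref{LA} is quoted from \cite{CL}*{Lemma 3.10} without proof, and the proof there is essentially the computation you give, namely expanding $\langle Ax,x\rangle$, substituting $a_{ii}=-\sum_{j\neq i}a_{ij}$, and arriving at $\langle Ax,x\rangle=\tfrac12\sum_{i\neq j}(-a_{ij})(x_i-x_j)^2$, from which semi-positive definiteness and the description of the kernel as functions constant on the components of the graph $G_A$ both follow (your reduction $Ax=0\Leftrightarrow\langle Ax,x\rangle=0$ for symmetric semi-positive definite $A$ is standard and fine). What you add beyond the cited source is a genuine and correct observation: as literally printed, the proposition is false, since the block matrix $A=\mathrm{diag}(B,B)$ with $B=\begin{pmatrix}1&-1\\-1&1\end{pmatrix}$ satisfies every stated hypothesis and has two-dimensional kernel, so the statement must carry an implicit irreducibility (connectedness of $G_A$) assumption. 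Catching that is the right instinct, not a defect of your proof.

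The one place you are too quick is the final claim that connectedness of $G_A$ ``is inherited from the connectivity of $V_Z$.'' It is not automatic: an edge of $G_A$ requires a strictly negative off-diagonal entry, and by \eqref{222} the contribution of a triangle $f\in F^2_Z$ vanishes exactly when $\Theta_Z(e^f_v)=\Theta_Z(e^f_u)=\pi/2$, so $G_A$ is in general a proper subgraph of the adjacency graph of $V_Z$; as you yourself note, $a_{ii}>0$ only forces minimum degree one in $G_A$, which does not preclude disconnection. Excluding a disconnected $G_A$ in the application requires the same kind of combinatorial argument, resting on Lemma \ref{less} (that $\phi_Z(U)<0$ for every nonempty proper $U\subsetneq V_Z$), that the proof of Lemma \ref{injective} deploys to handle the case of a vanishing diagonal entry -- the $\pi/2$-degeneracies you would have to rule out are exactly the configurations analyzed there. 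So your proof of the proposition under the corrected hypothesis is complete, but the sentence discharging that hypothesis in the application is an assertion, not yet an argument, and should either be proved along the lines of Lemma \ref{injective} or flagged as relying on it.
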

\begin{lemma}\label{injective}
The function $\psi_Z$ is convex.
Furthermore, $K_Z$ is injective when restricted to 
\[
P_Z:=\left\{ r \in \R^{V_Z}_{>0} \ |\ \prod_{v\in V_Z} r_v =1\right\}.
\]
\end{lemma}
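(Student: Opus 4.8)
The plan is to establish convexity of $\psi_Z$ via its Hessian, exactly mirroring the strategy used for $\psi$ in Proposition~\ref{convex}. The Hessian of $\psi_Z$ is the Jacobian $[\partial(K_{Z,v}\circ R_Z)/\partial x_u]_{v,u\in V_Z}$, which is symmetric by the previous lemma. I would verify that this matrix satisfies the hypotheses of Proposition~\ref{LA} when restricted to each relevant block. The diagonal entries $\partial(K_{Z,v}\circ R_Z)/\partial x_v$ are positive: since $K_{Z,v}=2\pi-\sum_{j,f}\theta_v^{f,j}$, the relations \eqref{333} and \eqref{222} show each angle-derivative $\partial(\theta_v^{f,j}\circ R_Z)/\partial x_v$ is nonpositive (strictly negative from the $F_Z^3$ contributions), so their negated sum is positive. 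The off-diagonal entries $\partial(K_{Z,v}\circ R_Z)/\partial x_u = -\sum_{j,f}\partial(\theta_v^{f,j}\circ R_Z)/\partial x_u$ are nonpositive by \eqref{111}, \eqref{333}, and \eqref{222}.

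The key structural input is the row-sum condition $\sum_{u}a_{vu}=0$. This reflects scale-invariance: $K_Z$ is invariant under uniform scalings of the metric, so $K_{Z,v}\circ R_Z$ is invariant under adding a constant to all coordinates $x_u$, forcing each row of the Hessian to sum to zero. I would make this precise by differentiating the scaling invariance, or equivalently by checking directly from the angle formulas that moving along $(1,\ldots,1)$ leaves every angle unchanged.

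The subtlety I expect to be the main obstacle is that Proposition~\ref{LA} gives semi-positive definiteness with a one-dimensional kernel spanned by $(1,\ldots,1)$ only when the matrix is genuinely irreducible in the relevant sense — that is, only when the underlying connectivity ties all of $V_Z$ together. If $V_Z$ splits into several connected components, or if certain angle-derivatives vanish (the equality case in \eqref{222}, where $\Theta_Z(e^f_v)=\Theta_Z(e^f_u)=\pi/2$, produces a genuinely zero off-diagonal entry), the matrix could decompose into blocks and the kernel could be larger than one-dimensional. I would therefore argue that the connectivity of $V_Z$ (which holds since $Z\in\mathcal{Z}$ is connected and $V_Z=Z\setminus Z'$) guarantees that the off-diagonal entries coming from $F_Z^3$ triangles, which are strictly negative by \eqref{333}, link all the vertices into a single component, so Proposition~\ref{LA} applies to the full Hessian and yields semi-positive definiteness with kernel exactly $\mathbb{R}(1,\ldots,1)$. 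Convexity of $\psi_Z$ follows immediately.

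Finally, for injectivity of $K_Z$ on $P_Z$, I would restrict $\psi_Z$ to the hyperplane $X_Z(P_Z)=\{\sum_v x_v=0\}$, which is transverse to the kernel $\mathbb{R}(1,\ldots,1)$. On this hyperplane the Hessian is strictly positive definite, so $\psi_Z|_{X_Z(P_Z)}$ is strictly convex and its gradient, namely $K_Z\circ R_Z$ restricted there, is injective. Since $R_Z$ maps $X_Z(P_Z)$ bijectively onto $P_Z$ and $K_Z$ is scale-invariant, this gives injectivity of $K_Z$ on $P_Z$, completing the proof.
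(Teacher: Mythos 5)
Your skeleton matches the paper's proof — nonpositive off-diagonal entries and zero row sums for the Hessian of $\psi_Z$ via \eqref{111}--\eqref{222} and Lemma~\ref{likeDGB}, Proposition~\ref{LA} for semi-positive definiteness, and injectivity of $K_Z|_{P_Z}$ from strict convexity transverse to the kernel direction $(1,\ldots,1)$ — but there is a genuine gap at the diagonal entries. You assert $\partial(K_{Z,v}\circ R_Z)/\partial x_v>0$ on the grounds of ``strictly negative $F_Z^3$ contributions,'' which tacitly assumes $F_Z^3(v)\neq\emptyset$ for every $v\in V_Z$; nothing guarantees this. The case $|V_Z|=1$ already defeats the claim: there $F_Z^2=F_Z^3=\emptyset$, every angle $\theta_v^{f,1}$ is the constant $\pi-\Theta_Z(e_v^f)$, so $K_{Z,v}$ is constant and the $1\times 1$ Hessian is identically zero, violating the hypothesis $a_{ii}>0$ of Proposition~\ref{LA} outright — and connectivity of $V_Z$, your proposed repair, cannot rescue a vanishing diagonal. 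More generally, your argument gives no reason why, for $|V_Z|\geq 2$, a vertex could not have $F_Z^3(v)=\emptyset$ with all triangles of $F_Z^2(v)$ in the equality case of \eqref{222} (both weights $\pi/2$), which would again produce a zero diagonal entry.

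The paper devotes most of its proof to precisely this degenerate case: it splits on whether some $v\in V_Z$ satisfies \eqref{nabla}. If no such $v$ exists, Proposition~\ref{LA} applies as you intend; note that, as stated, Proposition~\ref{LA} does not require irreducibility, so the paragraph you spend on zero off-diagonal entries and on linking $V_Z$ through $F_Z^3$ triangles is unnecessary once the diagonal is strictly positive — and your claim that $F_Z^3$ triangles alone connect $V_Z$ is unjustified anyway, since an edge joining two vertices of $V_Z$ may lie only in cells of $F_Z^2$ or in the subdivided cells $f_U$. If such a $v$ does exist, the paper shows by a combinatorial argument that $V_Z=\{v\}$: first $F_Z^3(v)=\emptyset$ by \eqref{333} and all weights at $F_Z^2(v)$-triangles are right angles by \eqref{222}; then a case analysis on the surrounding cells produces null-homotopic loops bounding subsets $U$ with $\phi_Z(U)\geq 0$, contradicting Lemma~\ref{less}, until one concludes $F_Z^2(v)=\emptyset$, so $v$ is surrounded by $F_Z^1$-triangles and $V_Z=\{v\}$. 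In that case $P_Z$ is a single point, injectivity is vacuous, and the zero Hessian still yields (non-strict) convexity. Your proposal contains no substitute for this second half; to complete it you would need to prove that $|V_Z|\geq 2$ forces every diagonal entry to be strictly positive, which is exactly the contrapositive the paper establishes. Your final step (restricting to the hyperplane $\sum_v x_v=0$ and using strict convexity there) is the same as the paper's appeal to the second-order Taylor expansion of $\psi_Z$ and is fine.
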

\begin{proof}
For distinct vertices $u,v\in V_Z$, we observe from \eqref{111}--\eqref{222} and Lemma~\ref{likeDGB} that 
\[
\frac{\partial (K_{Z,v} \circ R_Z)}{\partial x_v} \geq 0 \geq \frac{\partial (K_{Z,v} \circ R_Z)}{\partial x_u},\quad
\sum_{w\in V_Z}\frac{\partial (K_{Z,v} \circ R_Z)}{\partial x_w}=
\sum_{w\in V_Z}\frac{\partial (K_{Z,w} \circ R_Z)}{\partial x_v}=0.
\]
If there is no $v\in V_Z$ such that 
\begin{equation}\label{nabla}
\frac{\partial (K_{Z,v}\circ R_Z)}{\partial x_v}=0,
\end{equation}
then Proposition \ref{LA} leads to the convexity of $\psi_Z$.
The injectivity of $K_Z|_{P_Z}$ follows from the second order Taylor expansion of $\psi_Z$ .

If there exists $v\in V_Z$ satisfying \eqref{nabla},
then $|V_Z|=1$, consequently $K_Z|_{P_Z}$ is injective.
Indeed for such $v$, we find $F^3_Z(v)=\emptyset$ by \eqref{333}.
If there exists $f\in F^2_Z$ with vertices $v,u\in V_Z, w\in V^Z$,
then $\Theta_Z(e_v^f)=\Theta_Z(e_u^f)=\pi/2$ by~\eqref{222}.
It follows from $F^3_Z(v) =\emptyset$ that 
there exist $f'\in  F^2_Z$ and $w' \in V^Z$ such that $e_w^f\subset f', w' \in f'$.
Furthermore, there exist $\tilde{f}, \tilde{f}'\in  F_D$ such that $e_u^f\subset\tilde{f}, e_u^{f'}\subset \tilde{f'}$.
Let $\tilde{u}$ (resp.\ $\tilde{u}'$) be the vertex of $\tilde{f}$ (resp.\ $\tilde{f}'$) other than $v,w$ (resp.\ $v,w'$).
If $\tilde{f}=\tilde{f}'$, namely $\tilde{u}=w',\tilde{u}'=w$, then $\mathrm{Lk}_{Z}(\{v\})=\{f,f'\tilde{f}\}$ and 
\[
\phi_Z(\{v\})=-3\pi+ \Theta_Z(e^{f}_v)+\Theta_Z(e^{f'}_v)+\Theta_Z(e^{\tilde{f}}_v)+2\pi=\Theta_Z(e^{\tilde{f}}_v)\geq 0,
\]
which contradicts Lemma \ref{less}.
Arguing similarly, if $w,w'$ are joined by some  $e\in E_Z$,
then each of three edges $e,e^f_v, e^{f'}_v$ and $e,e^f_u, e^{f'}_u$
form a null-homotopic loop and the sum of the weights of the three edges is at least $\pi$. 
However at least one of the three edges do not form the boundary of an element in $F_D$, contradicting Lemma \ref{less}.
Thus $w,w'$ are not joined by $e\in E_Z$, implying $Z\in\mathcal{Z}_4$ and $\Theta(e)=\pi/2$ for $e\in \{e^f_{v(f)}\}_{f\in\mathrm{Lk}(Z)}$.
Moreover, we have $\Theta_Z(e^{\tilde{f}}_v)=\Theta_Z(e^{\tilde{f}'}_v)=\pi/2$, regardless of the position of $\tilde{u},\tilde{u}'$ by $Z\in\mathcal{Z}_4$ and \eqref{222}.
We see that $\tilde{u}\neq \tilde{u}'$ since $\tilde{u}=\tilde{u}'$ implies $\phi_Z(\{v\})=0$.
If $\tilde{u}\in V^Z$, then there exists 
$e\in \{e^f_{v(f)}\}_{f\in\mathrm{Lk}(Z)}$ such that $e$ joins $w', \tilde{u}$.
We find that $e^{\tilde{f}}_v, e^f_{u}, e^{f'}_u, e$ form a null-homotopic loop and there exists $U  \subsetneq V_Z$ such that 
$\tilde{u}' \in U$, $\mathrm{Lk}_Z(U)=\{e^{\tilde{f}}_v, e^f_{u}, e^{f'}_u, e\}$ and $\phi_Z(U)=0$, which never happens by Lemma \ref{less}.
However if $\tilde{u}\in V_Z$, then by $F^3_Z(v)=\emptyset$,
there exists $\hat{f}\in F^2_Z$ such that $e^{\tilde{f}}_w \subset \hat{f}$.
For $\hat{w} \in \hat{f}\cap V^Z$, $\tilde{u}\neq  \tilde{u}'$ implies $\hat{w}\neq w'$, 
hence $\hat{w},w$ are joined by some $e\in\{e^{f}_{v(f)}\}_{f\in \mathrm{Lk}(Z)}$. 
The three edges $e^{\tilde{f}}_{\tilde{u}},e^{\hat{f}}_{\tilde{u}},e$ form a null-homotopic loop
and surround $\tilde{u}$, which leads to a contradiction as well as $\tilde{u} \in V^Z$.
Thus $F^2_Z(v)=\emptyset$, and $v$ is surrounded by elements in $F^1_Z$, that is, $V_Z=\{v\}$ by the construction of $Z$.
\end{proof}
\begin{lemma}{\rm (cf. \cite{Th} \cite{CL}*{\S6.1}, \cite{MR}*{\S7})} \label{origin}
There exists a unique $r\in P_Z$ such that $K_Z(r)=0$.
\end{lemma}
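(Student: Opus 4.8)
The plan is to obtain uniqueness directly from Lemma~\ref{injective} and to establish existence by a variational argument, realizing the desired $r$ as the image under $R_Z$ of a minimizer of the convex function $\psi_Z$. Since $K_Z$ is scale-invariant, Lemma~\ref{likeDGB} gives $\sum_{v\in V_Z}(\nabla\psi_Z(x))_v=\sum_{v\in V_Z}K_{Z,v}(R_Z(x))=0$ for every $x$, so $\psi_Z$ is constant along the direction $(1,\dots,1)$ and descends to the hyperplane $H:=\{x\in\R^{V_Z}\mid \sum_{v\in V_Z}x_v=0\}$, which satisfies $R_Z(H)=P_Z$. A point $r=R_Z(x^\ast)\in P_Z$ solves $K_Z(r)=0$ precisely when $x^\ast$ is a critical point of $\psi_Z|_H$: the vanishing of the gradient of $\psi_Z|_H$ means $\nabla\psi_Z(x^\ast)=K_Z(r)$ is parallel to $(1,\dots,1)$, and combined with $\sum_{v}K_{Z,v}(r)=0$ this forces $K_Z(r)=0$. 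Thus it suffices to produce a minimizer of the smooth convex function $\psi_Z|_H$.

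The heart of the proof is to show that $\psi_Z|_H$ is coercive. Fix $x_0\in H$ and a direction $d\in H\setminus\{0\}$; I would compute the limiting slope $\lim_{s\to\infty}\frac{d}{ds}\psi_Z(x_0+sd)=\lim_{s\to\infty}\langle K_Z(R_Z(x_0+sd)),d\rangle$. Writing $c_1>\cdots>c_m$ for the distinct values of $d$ and $U_k:=\{v\mid d_v\ge c_k\}$, summation by parts together with Lemma~\ref{likeDGB} gives
\[
\langle K_Z(R_Z(x_0+sd)),d\rangle=\sum_{k=1}^{m-1}(c_k-c_{k+1})\sum_{v\in U_k}K_{Z,v}(R_Z(x_0+sd)).
\]
Because $d\in H\setminus\{0\}$ is nonconstant, $m\ge2$. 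To evaluate each partial sum I would rescale: using the scale-invariance of $K_Z$, divide the argument by $e^{sc_k}$, so that the coordinates converge in $[0,\infty]^{V_Z}$ with zero-set exactly $V_Z\setminus U_k$. Lemma~\ref{compati} then yields $\lim_{s\to\infty}\sum_{v\in U_k}K_{Z,v}(R_Z(x_0+sd))=-\phi_Z(V_Z\setminus U_k)$, and since $V_Z\setminus U_k$ is a nonempty proper subset, Lemma~\ref{less} makes this quantity strictly positive.

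Consequently every summand above is positive, so the limiting slope $\lim_{s\to\infty}\frac{d}{ds}\psi_Z(x_0+sd)$ is a strictly positive constant and $\psi_Z(x_0+sd)\to+\infty$ along every ray in $H$. A convex function on the finite-dimensional space $H$ that tends to $+\infty$ along every ray is coercive, hence attains its minimum at some $x^\ast\in H$; by the reduction above, $r:=R_Z(x^\ast)\in P_Z$ satisfies $K_Z(r)=0$. (When $|V_Z|=1$ the claim is immediate, as $P_Z$ is a single point where $K_{Z,v}=0$ by Lemma~\ref{likeDGB}.) Finally, uniqueness follows at once from the injectivity of $K_Z$ on $P_Z$ in Lemma~\ref{injective}.

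The main obstacle is the coercivity step: one must organize the degeneration via summation by parts and the rescaling so that Lemma~\ref{compati} applies tier by tier, with Lemma~\ref{less} supplying the crucial sign $-\phi_Z(V_Z\setminus U_k)>0$; once the limiting slope is seen to be positive, the remainder is the standard minimization of a coercive convex function.
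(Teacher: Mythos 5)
Your proof is correct, but it takes a genuinely different route from the paper's. The paper's argument is topological rather than variational: it introduces the convex polytope $Y_Z:=\bigcap_{\emptyset\neq U\subsetneq V_Z}\{\kappa\in Q_Z\ |\ \sum_{v\in U}\kappa_v>\phi_Z(U)\}$ inside the hyperplane $Q_Z=\{\kappa\ |\ \sum_{v\in V_Z}\kappa_v=0\}$, shows $K_Z(P_Z)\subset Y_Z$ via Lemmas \ref{likeDGB} and \ref{compati}, extends $K_Z$ continuously and injectively to the one-point compactifications of $P_Z$ and $Y_Z$, and then uses invariance of domain together with compactness and connectedness to conclude $K_Z(P_Z)=Y_Z$; existence follows since Lemma \ref{less} places the origin in $Y_Z$, and uniqueness from Lemma \ref{injective}, exactly as in your last step. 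Your route instead minimizes $\psi_Z$ on the hyperplane $H$ and proves coercivity by the tier-by-tier rescaling: the summation-by-parts identity, the flip $\sum_{v\in U_k}K_{Z,v}=-\sum_{v\in V_Z\setminus U_k}K_{Z,v}$ from Lemma \ref{likeDGB}, and the application of Lemma \ref{compati} after dividing by $e^{sc_k}$ are all sound --- note in particular that Lemma \ref{compati} is stated for convergence on $[0,\infty]^{V_Z}$, so coordinates escaping to $\infty$ in the rescaled sequence cause no trouble, and the rescaling itself is legitimate by the scale invariance of $K_Z$; the crucial strict positivity $-\phi_Z(V_Z\setminus U_k)>0$ is Lemma \ref{less}, and your separate treatment of $|V_Z|=1$ covers the case where strict convexity of $\psi_Z$ can fail. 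The trade-off: the paper's compactification argument yields the stronger conclusion $K_Z(P_Z)=Y_Z$, a complete characterization of attainable curvature data in the Thurston/Marden--Rodin style that the lemma's citations point to, whereas your argument produces only the zero of $K_Z$; in exchange, yours is more elementary and self-contained, avoiding invariance of domain and the implicit properness verification needed for the continuous extension to the compactifications, and it is exactly in the spirit of Colin de Verdi\`ere's variational principle \cite{V}, which the paper cites elsewhere.
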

\begin{proof}
Define the hyperplane $Q_Z$ of $\R^{V_Z}$ and the convex polytope $Y_Z$ in $Q_Z$ respectively by
\[
Q_Z:=\left\{\kappa \in \R^{V_Z} \ |\ \sum_{v\in V_Z} \kappa_v=0  \right\},\quad
Y_Z:=\bigcap_{\emptyset \neq U \subsetneq V_Z}\left\{\kappa \in Q_Z \ |\ \sum_{v\in U}\kappa_v >\phi_Z(U) \right\}.
\]
Note that $P_Z, Y_Z$ are homeomorphic to $\R^{|V_Z|-1}$.
Lemmas~\ref{likeDGB},~\ref{compati} yield $K_Z(P_Z)\subset Y_Z$, 
and the map $K_Z:P_Z \to Y_Z$ is continuously and injectively extended to the map between the one point compactifications of $P_Z$, $Y_Z$.
Then by the invariance of domain theorem, the extended map is an open map.
Since the one point compactifications of $P_Z$, $Y_Z$ are Hausdorff, compact and connected, 
the extended map is surjective hence $K_Z(P_Z)=Y_Z$.
By Lemma~\ref{less}, $Y_Z$ contains the origin.
This with Lemma \ref{injective} gives a unique $r\in P_Z$ such that $K_Z(r)=0$.
\end{proof}
\begin{remark}
Some lemmas in Subsection 3.1 are valid not only for $Z\in \mathcal{Z}$, but also $V$.
To see this, let $K_V$ be the curvature on the weighted cell-division $(T_0,\Theta_0)$ of $S$ defined in \eqref{T0}.
We moreover set 
\[
P_V:=\left\{r_v\in \R^{V_0}_{>0} \ |\ \prod_{v\in V_0} r_v=1 \right\}
\]
if $\chi(S)=0$, and $P_V:=\R^{V_0}_{>0}$ if $\chi(S)<0$.
Since $(T_0,\Theta_0)$ satisfies the conditions (II) and (IV), 
Theorem \ref{CLT} ensures a unique $r\in P_V$ such that $K_V(r)=0$ as well as Lemma~\ref{origin}.
\end{remark}

\begin{definition}
Let $r^\ast$ be a unique element in $\R^V_{>0}$ such that $r^\ast|_{V_Z}\in P_Z$ and $K_Z(r^\ast)=0$ for all $Z\in\mathcal{Z} \cup\{V\}$. 
\end{definition}
\begin{proposition}\label{cor}
Fix $v\in V_0$ and a smooth curve $r(t):[0,\infty) \to \R^V_{>0}$.
We have $\lim_{t\to \infty}K_u(r(t))=0$ for any $u\in V$ if and only if 
the curve $\rho(t):[0,\infty)\to \R^{V}_{>0}$ defined for $u\in V$ as 
\[
\rho_u(t):=
\begin{dcases}
\frac{r_u(t)}{r_v(t)}, & \text{if $\chi(S)=0$},\\
r_u(t), & \text{if $\chi(S)<0$}
\end{dcases}
\]
degenerates of order $r^\ast$.
\end{proposition}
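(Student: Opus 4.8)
The plan is to establish the two implications separately. For the \emph{if} direction, suppose $\rho(t)$ degenerates of order $r^\ast$. Because the curvature is invariant under uniform scalings when $\chi(S)=0$, and because $\rho(t)=r(t)$ when $\chi(S)<0$, we have $K_u(r(t))=K_u(\rho(t))$ for every $u$ and $t$. Corollary~\ref{outin0} then yields $\lim_{t\to\infty}K_u(\rho(t))=K_{Z,u}(r^\ast)$ for $u\in V_Z$ and $\lim_{t\to\infty}K_u(\rho(t))=K_{V,u}(r^\ast)$ for $u\in V_0$. These limits vanish since $r^\ast|_{V_Z}\in P_Z$ satisfies $K_Z(r^\ast)=0$, and likewise for $V$; hence $\lim_{t\to\infty}K_u(r(t))=0$ for all $u$. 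The substantial content is the \emph{only if} direction, which I would carry out as a compactness-and-uniqueness argument along the filtration $V=\bigsqcup_{n=0}^{N}V_n$, verifying the three conditions in the definition of degeneration by analyzing subsequential limits of $\rho$ in $[0,\infty]^V$.

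Assume $\lim_{t\to\infty}K_u(r(t))=0$, equivalently $\lim_{t\to\infty}K_u(\rho(t))=0$, for every $u$. The base case is level $0$. Since $\mathcal{C}(Z_T)\subset\mathcal{Z}$, additivity of $\phi$ gives $\phi(Z_T)=\sum_{W\in\mathcal{C}(Z_T)}\phi(W)=0$; along any subsequence $\rho(t_k)\to\bar\rho$, the equality $\lim_k\sum_{v\in Z_T}K_v(\rho(t_k))=0=\phi(Z_T)$ together with the characterization in Remark~\ref{310} forces $\lim_k\rho_u(t_k)/\rho_v(t_k)=0$ for every $u\in Z_T$ adjacent to $V_0$ (and $\lim_k\rho_u(t_k)=0$ when $\chi(S)<0$). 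This certifies that $Z_T$ degenerates relative to $V_0$, so the faces $f_Z$ with $Z\in\mathcal{C}(Z_T)$ appear in the limit and Corollary~\ref{outin} with $U=V$ applies, giving $\lim_k K_u(\rho(t_k))=K_{V,u}(\bar\rho|_{V_0})$ for $u\in V_0$. As the left-hand side is $0$ and the zero of $K_V$ is unique (Lemma~\ref{origin} and the remark following it), we get $\bar\rho|_{V_0}=r^\ast|_{V_0}$, in particular $\bar\rho_u>0$ on $V_0$.

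I would then peel off one filtration level at a time. Having stabilized $V_0$ and degenerated $Z_T$, I rescale by a reference vertex $v_Z$ in each component $V_Z$ at the next level and repeat the argument with the reduced data $(T_D,\Theta_D)$: the reduced bound Lemma~\ref{compati}, in the role Remark~\ref{310} played above, certifies that the deeper vertices in $Z'$ degenerate relative to $V_Z$, while the strict inequality $\phi_Z(U)<0$ of Lemma~\ref{less} rules out any splitting \emph{within} $V_Z$, so the rescaled ratios converge to strictly positive values. Corollary~\ref{outin} with $U=Z$ then computes the limiting curvatures as $K_Z$ of these ratios, and uniqueness of the zero of $K_Z|_{P_Z}$ (Lemmas~\ref{injective},~\ref{origin}) identifies them with $r^\ast|_{V_Z}$, parallel to the required $r^\ast|_{V_U}$. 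Iterating through $n=0,1,\dots,N$ verifies all three bullet conditions for the subsequential limit. Since every subsequential limit is forced to equal $r^\ast$ with the same degeneration pattern, and the within-level ratios are uniquely determined by injectivity of $K_Z|_{P_Z}$, all subsequential limits coincide; hence $\rho(t)$ converges and degenerates of order $r^\ast$.

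The hardest part will be the inductive peeling: before Corollary~\ref{outin} can be invoked at level $n$ one must already know the within-level ratios converge to strictly positive limits and that the shallower and deeper ratios tend to $\infty$ and $0$ respectively, which is precisely the degeneration one is trying to prove. I would break this circularity exactly as in the base case, using the topological lower bounds of Proposition~\ref{topcurv} and its reduced analogue Lemma~\ref{compati}, which certify the vanishing of the relevant ratios directly from $\sum K_v\to\phi$ without reference to the limiting values. Keeping the nested rescalings mutually consistent across all $N+1$ levels is the principal bookkeeping difficulty.
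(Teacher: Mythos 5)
Your ``if'' direction is exactly the paper's: scaling invariance, Corollary~\ref{outin0}, and $K_Z(r^\ast)=0$. Your ``only if'' direction also uses the paper's toolkit (the equality characterization of Proposition~\ref{topcurv}/Remark~\ref{310}, Corollary~\ref{outin}, and injectivity via Lemmas~\ref{injective},~\ref{origin}), but you run the filtration in the opposite direction, and that reversal creates a genuine gap at your base case. The paper starts at the \emph{deepest} level: for $Z\in\mathcal{C}(V_N)$ one has $Z'=\emptyset$, hence $\phi(W)<0$ for every nonempty proper $W\subsetneq Z$, so the within-$Z$ ratios are pinned in $(0,\infty)$ by Remark~\ref{310} alone, \emph{before} Corollary~\ref{outin} is ever invoked; the base case is self-contained, and the induction then moves outward with the deeper behavior already in hand. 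You instead start at level $0$ and immediately invoke Corollary~\ref{outin} with $U=V$. Its hypotheses require $\lim_{t\to\infty}\rho_u(t)/\rho_v(t)=0$ for \emph{every} $u\in Z_T$, whereas Remark~\ref{310} applied to $U=Z_T$ only yields vanishing of ratios across edges joining $Z_T$ to its complement. For $\chi(S)<0$ one additionally gets $\rho_u\to0$ on all of $Z_T$, but for $\chi(S)=0$ a vertex deep inside a component of $Z_T$, not adjacent to $V_0$, could a priori remain comparable to the scale of $V_0$ along a subsequence, and nothing available at level $0$ excludes this---that exclusion \emph{is} the deeper-level analysis. Relatedly, positivity and finiteness of the subsequential limit $\bar\rho$ on $V_0$ is a \emph{hypothesis} of Corollary~\ref{outin}, not a conclusion you may read off afterwards; you must first rule out $\bar\rho_u\in\{0,\infty\}$ on $V_0$ (e.g.\ the zero set $U_0$ of a subsequential limit satisfies $\lim\sum_{u\in U_0}K_u(\rho)=\phi(U_0)$ by Remark~\ref{310}, hence $\phi(U_0)=0$ and $U_0\subset Z_T$; the blow-up set is handled through its complement when $\chi(S)=0$).

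You do flag this circularity in your last paragraph, but the announced fix---Lemma~\ref{compati} ``in the role Remark~\ref{310} played''---does not break it: Lemma~\ref{compati} is a statement about $K_Z$ on $\R^{V_Z}_{>0}$, and the only bridge from the flow's curvature $K$ to $K_Z$ is Corollary~\ref{outin} (equivalently Proposition~\ref{coin}), which presupposes precisely the ratio control you are trying to certify. The non-circular instrument at every depth is Proposition~\ref{topcurv}/Remark~\ref{310} applied in the \emph{original} complex: every $W\in\mathcal{Z}$, at any depth, satisfies $\sum_{u\in W}K_u(\rho(t))\to0=\phi(W)$ with no side conditions (Remark~\ref{310} explicitly notes there is no restriction on the behavior away from $W$), and any putative splitting within a level along a subsequence produces a set $U$ with $\phi(U)=0$ whose components must lie in the appropriate $Z'$, a contradiction. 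Reorganizing your induction to run from the deepest level outward, as the paper does, removes the difficulty; as written, your base case is incomplete for $\chi(S)=0$.
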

\begin{proof}
Since the curvature is invariant under uniform scalings of the metric for $\chi(S)=0$, 
we have $K_u(r(t))=K_u(\rho(t))$ for any $u\in V$ and $t\geq 0$.

If $\{\rho(t)\}_{t\geq0}$ degenerates of order $r^\ast$,
then Corollary \ref{outin0} yields for any  $Z\in \mathcal{Z}\cup\{V\}$ and $u\in V_Z$ that  
\[
\lim_{t\to \infty} K_u(r(t))=\lim_{t\to \infty} K_u(\rho(t)) =K_{Z,u}(r^\ast)=0.
\]

Conversely, assume that $\lim_{t\to \infty}K_u(r(t))=0$ holds for any $u\in V$.
For $Z\in \mathcal{C}(V_N)$, we see that $Z\in \mathcal{Z}, Z'=\emptyset$ and
\[
\lim_{t\to \infty}\sum_{u\in Z=V_Z} K_u(\rho(t))=0=\phi(Z).
\]
The equality condition in Proposition \ref{topcurv} (see also Remark \ref{310})
together with $\phi(Z\setminus U)<0$ for any nonempty proper subset $U$ of $Z$ leads to 
\[
\lim_{t\to \infty} \frac{\rho_u(t)}{\rho_z(t)}\in (0,\infty), \quad
\lim_{t\to \infty} \frac{\rho_z(t)}{\rho_w(t)}=0
\]
for any $u,z\in V_Z,w \in V^Z$, 
and $\lim_{t\to \infty} \rho_u(t)=0$ if $\chi(S)<0$, 
in turn Corollary~\ref{outin} yields
\[
0=\lim_{t\to \infty} K_u(\rho(t))=K_{Z,u}\left(\lim_{t\to \infty} ({\rho_x(t)}/{\rho_z(t)})_{x\in V_Z}\right)
\]
for any $u,z\in V_Z$.
By Lemma~\ref{injective}, 
the limit of $\{({\rho_x(t)}/{\rho_z(t)})_{x\in V_Z}\}_{t\geq 0}$ at infinity is parallel to $(r^\ast_x)_{x\in V_Z}$. 
An inductive argument for $0\leq i \leq N-1$ completes the proof of the corollary.
\end{proof}

\subsection{Main Theorem}
We prove the following theorem, which implies Theorem \ref{main}.
\begin{theorem}\label{MainTheorem}
Let $(T,\Theta)$ be a weighted triangulation of $S$ 
such that $\phi(U)\leq0 $ for any $U\subset V$ and  $Z_T=\{z\in Z\ |\ Z\in \mathcal{Z}\} \neq \emptyset$.
Fix $v\in V_0$ and the combinatorial Ricci flow $\{r(t)\}_{t\geq0}$ with an initial data $r\in\R^V_{>0}$.
Then the curve $\rho(t):[0,\infty)\to \R^{V}_{>0}$ defined for $u\in V$ as 
\[
\rho_u(t):=
\begin{dcases}
\frac{r_u(t)}{r_v(t)}, & \text{if $\chi(S)=0$},\\
r_u(t), & \text{if $\chi(S)<0$}
\end{dcases}
\]
degenerates of order $r^\ast$.
\end{theorem}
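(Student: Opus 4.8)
The plan is to reduce the statement to the decay of the curvature along the flow, and then to prove that decay by feeding the infinitesimal description of Subsection 3.1 into the gradient-flow estimate of Proposition \ref{AGS}.

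First I would invoke Proposition \ref{cor}. When $\chi(S)=0$ the curvature is scale-invariant, so $K_u(r(t))=K_u(\rho(t))$ for every $u$ and $t$, and when $\chi(S)<0$ one has $\rho\equiv r$; in either case Proposition \ref{cor} asserts that $\rho(t)$ degenerates of order $r^\ast$ precisely when $\lim_{t\to\infty}K_u(r(t))=0$ for all $u\in V$. Thus it suffices to prove this curvature decay for the combinatorial Ricci flow itself. Recalling from Subsection 2.3 that, with $x(t):=X_S(r(t))$, the flow satisfies $\dot x(t)=-\nabla\psi(x(t))$ where $\psi$ is the convex function of Proposition \ref{convex} and $\nabla\psi=K\circ R_S$, the desired decay is exactly $\lim_{t\to\infty}|\nabla\psi(x(t))|=0$.

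Next I would apply Proposition \ref{AGS} with $h=\psi$ and $\xi=x$ on the open convex domain $X_S(\R^V_{>0})$, obtaining for every $\tau>0$ and every comparison point $\xi^\ast$ in that domain
\[
|\nabla\psi(x(\tau))|^2\leq|\nabla\psi(\xi^\ast)|^2+\frac{1}{\tau^2}|\xi^\ast-x(0)|^2.
\]
Since the flow exists for all time and, by the proof of Proposition \ref{AGS}, $t\mapsto|\nabla\psi(x(t))|^2$ is nonincreasing, the limit $L:=\lim_{t\to\infty}|\nabla\psi(x(t))|^2$ exists; letting $\tau\to\infty$ yields $L\leq|\nabla\psi(\xi^\ast)|^2$ for every admissible $\xi^\ast$. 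The whole matter therefore comes down to showing $\inf_{\xi^\ast}|\nabla\psi(\xi^\ast)|=0$, that is, that the curvature can be made arbitrarily small on $\R^V_{>0}$. To produce such near-vanishing points I would exhibit an explicit degenerate curve and read off its curvatures. Set $\rho^\ast_u(t):=r^\ast_u\,e^{-n(u)t}$. Because each block $V_U$ (for $U\in\mathcal{Z}\cup\{V\}$) is a connected component of a single $V_n$, all its vertices share one value $n(u)=n_U$, so within $V_U$ the ratios $\rho^\ast_u/\rho^\ast_{u'}\equiv r^\ast_u/r^\ast_{u'}$ are constant, hence parallel to $r^\ast|_{V_U}$; for $u\in U'$ and $v\in V_U$ one has $n(u)>n_U$, so $\rho^\ast_u/\rho^\ast_v\to0$; and $\rho^\ast_z(t)\to0$ exactly for $z\in Z_T$ while $\rho^\ast_v(t)\to r^\ast_v$ for $v\in V_0$. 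Thus $\{\rho^\ast(t)\}_{t\geq0}$ degenerates of order $r^\ast$, and Corollary \ref{outin0} together with the defining relations $K_Z(r^\ast)=0$ for all $Z\in\mathcal{Z}\cup\{V\}$ (Lemma \ref{origin}) gives
\[
\lim_{t\to\infty}K_u(\rho^\ast(t))=K_{U,u}(r^\ast)=0\qquad(u\in V_U).
\]
Choosing $\xi^\ast=X_S(\rho^\ast(t_n))$ along a sequence $t_n\to\infty$ therefore makes $|\nabla\psi(\xi^\ast)|=|K(\rho^\ast(t_n))|\to0$, whence $L\leq\inf_n|\nabla\psi(X_S(\rho^\ast(t_n)))|^2=0$. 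So $\lim_{t\to\infty}K_u(r(t))=0$ for all $u$, and Proposition \ref{cor} concludes that $\rho(t)$ degenerates of order $r^\ast$.

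The step I expect to be the main obstacle is precisely the middle one, proving that $\inf|\nabla\psi|$ vanishes: when (II) fails, $\psi$ has no critical point in $X_S(\R^V_{>0})$, so any minimizing sequence must escape to the boundary of the domain, and one cannot simply appeal to a minimizer. The infinitesimal apparatus of Subsection 3.1, namely the curvature-like functions $K_Z$, their compatibility with $K$ in Corollary \ref{outin0}, and the existence of the balanced metric $r^\ast$, is exactly what controls this escape, identifying the limiting curvatures along a degenerate curve with the genuinely vanishing block curvatures $K_Z(r^\ast)$. A minor technical point is that for $\chi(S)<0$ the domain $X_S(\R^V_{>0})=(-\infty,0)^V$ is not all of $\R^V$; but it is open and convex, the flow stays inside it by global existence, and the chosen comparison points lie in it, so the convexity inequalities in the proof of Proposition \ref{AGS} apply verbatim.
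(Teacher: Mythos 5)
Your proof is correct, and it follows the paper's strategy in all essentials: reduce via Proposition \ref{cor} to showing $\lim_{t\to\infty}|K(r(t))|=0$, view $X_S(r(t))$ as the gradient flow of $\psi$, and feed comparison points built from $r^\ast$ and the level function $n(u)$ into Proposition \ref{AGS}, with Corollary \ref{outin0} and the relations $K_Z(r^\ast)=0$ guaranteeing that the comparison curvatures vanish. The one place you genuinely diverge is in how the two limits are ordered. The paper makes a simultaneous choice, taking $\xi^\ast=x^\ast(\tau)=X_S((\tau^{-n(u)}r^\ast_u)_{u\in V})$ at time $\tau$, so that both terms on the right of the AGS inequality vanish at once; this forces the polynomial parametrization $\tau^{-n(u)}$, under which the $X_S$-coordinates grow only logarithmically and hence $|x^\ast(\tau)-X_S(r(0))|^2/\tau^2\to0$. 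You instead fix each comparison point, let $\tau\to\infty$ first (your appeal to the monotonicity of $|\nabla\psi(\xi(t))|^2$, established inside the proof of Proposition \ref{AGS}, is legitimate, though a $\limsup$ would suffice without it), and only then take the infimum over the sequence $X_S(\rho^\ast(t_n))$. This decoupling is exactly what permits your exponential parametrization $\rho^\ast_u(t)=r^\ast_u e^{-n(u)t}$: had you inserted that curve into the paper's one-shot argument, the error term $|x^\ast(\tau)-X_S(r(0))|^2/\tau^2$ would tend to $\sum_{u\in V}n(u)^2>0$ rather than to zero. Your version buys robustness (any curve degenerating of order $r^\ast$ yields admissible comparison points, with no growth estimate needed), while the paper's buys brevity (a single limit); both rest on the same key lemmas, including the structural fact, which you rightly make explicit, that each $V_U$ lies in a single level $V_{n_U}$ with $n(u)>n_U$ for $u\in U'$. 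Your remark on the domain issue for $\chi(S)<0$, where $X_S(\R^V_{>0})=(-\infty,0)^V$ is a proper open convex set on which the convexity inequalities of Proposition \ref{AGS} still apply verbatim, addresses a point the paper leaves implicit.
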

\begin{proof}
By Proposition \ref{cor}, it suffices to show $\lim_{t\to \infty}|K(r(t))| =0$. 
As mentioned in Subsection 2.3,  $\{X_S(r(t))\}_{t\geq0}$ is a gradient flow of $\psi$.
Let us define for $t\geq0$ 
\[
x^\ast(t):= X_S ((t^{-n(u)}r_u^\ast)_{u\in V}).
\]
Then the curve $\{R_S(x^\ast(t)) \}_{t\geq0}$ degenerates order of $r^\ast$ and 
$ \lim_{t \to \infty} |x^\ast(t)-X_S(r(0))|^2/t^2=0$.
We moreover deduce
\[
\lim_{t \to \infty} \nabla \psi(x^\ast(t))=\lim_{t \to \infty}K(R_S(x^\ast(t)))=0
\]
from Proposition~\ref{convex} and Corollary \ref{outin0}.
Since Proposition \ref{AGS} yields that
\[
|K(r(t))|^2=|\nabla \psi(X_S(r(t))|^2 
\leq |\nabla \psi(x^\ast(t))|^2 +\frac{1}{t^2} |x^\ast(t)-X_S(r(0))|^2,
\]
by letting $t\to \infty$, we obtain $\lim_{t\to \infty}|K(r(t))|^2 =0$.
%
\end{proof}

\begin{proof}[Proof of Theorem $\ref{main}$]
The first claim follows from Theorem \ref{MainTheorem}.
In the case of $\chi(S)=0$, $\prod_{v\in V}r_v(t)$ is constant in $t\geq0$, which implies that 
$\lim_{t \to \infty} r_v(t)=\infty$ for $v\in V_0$,
that is $\{r(t)\}_{t\geq0}$ does not converges on $\R^V_{\geq 0}$ at infinity.
To prove the rest of the claim, 
let $(T_0,\Theta_0)$ be the weighted cell-devision defined in \eqref{T0}.
Then a desired weighted triangulation is $(T_0,\Theta_0)$ itself if $\mathcal{C}(Z_T) \cap\mathcal{Z}_4 =\emptyset$,
and is obtained by adding diagonal edges of weight 0 to $(T_0, \Theta_0)$ if $\mathcal{C}(Z_T) \cap\mathcal{Z}_4 \neq \emptyset$.
\end{proof}

\begin{bibdiv}
  \begin{biblist}
\bib{AGS}{book}{
   author={Ambrosio, Luigi},
   author={Gigli, Nicola},
   author={Savar\'e, Giuseppe},
   title={Gradient flows in metric spaces and in the space of probability
   measures},
   series={Lectures in Mathematics ETH Z\"urich},
   edition={2},
   publisher={Birkh\"auser Verlag, Basel},
   date={2008},
   pages={x+334},
}
\bib{CL}{article}{
   author={Chow, Bennett},
   author={Luo, Feng},
   title={Combinatorial Ricci flows on surfaces},
   journal={J. Differential Geom.},
   volume={63},
   date={2003},
   number={1},
   pages={97--129},
}
\bib{V}{article}{
   author={Colin de Verdi\`ere, Yves},
   title={Un principe variationnel pour les empilements de cercles},
   journal={Invent. Math.},
   volume={104},
   date={1991},
   number={3},
   pages={655--669},
}
\bib{MR}{article}{
   author={Marden, Al},
   author={Rodin, Burt},
   title={On Thurston's formulation and proof of Andreev's theorem},
   conference={
      title={Computational methods and function theory},
      address={Valpara\'\i so},
      date={1989},
   },
   book={
      series={Lecture Notes in Math.},
      volume={1435},
      publisher={Springer, Berlin},
   },
   date={1990},
   pages={103--115},
}

\bib{Th}{book}{
   author={Thurston, William P.},
   title={Geometry and topology of three-manifolds},
   publisher={Princeton lecture notes,  available at {\sf http://www.msri.org/publications/books/gt3m/}},
}

\bib{Y}{article}{
   author={Yamada, Masahiro},
   title={Master Thesis},
   language={Japanese},
   journal={Department of Mathematics and Information Sciences, Tokyo Metropolitan University},
   date={2018},
   }

\end{biblist}
\end{bibdiv}		
\end{document}